\documentclass[letterpaper, 11pt, reqno]{amsart}
\usepackage[top = 1in, bottom = 1in, left=1in, right=1in]{geometry}
\usepackage{setspace}
\usepackage[utf8]{inputenc}
\usepackage[T1]{fontenc}
\usepackage{lineno}  
\usepackage{tikz}
\usepackage{xcolor}
\usetikzlibrary{calc}
\newcounter{propcounter}
\usepackage[inline]{enumitem}
\setlist[enumerate,1]{label={\upshape (\roman*)}}

\usepackage{amssymb,amsmath,amsthm,graphicx,xcolor,mathtools,mathrsfs}
\usepackage{comment}

\usepackage{hyperref}
\hypersetup{colorlinks, linkcolor={red!50!black}, citecolor={green!50!black}, urlcolor={blue!50!black}}

\newcommand{\incup}{\mathbin{\mathchoice
{\ooalign{\hidewidth$\displaystyle\cdot$\hidewidth\cr$\displaystyle\cup$}}
{\ooalign{\hidewidth$\textstyle\cdot$\hidewidth\cr$\textstyle\cup$}}
{\ooalign{\hidewidth$\scriptstyle\cdot$\hidewidth\cr$\scriptstyle\cup$}}
{\ooalign{\hidewidth$\scriptscriptstyle\cdot$\hidewidth\cr$\scriptscriptstyle\cup$}}}}

\newcommand{\ca}{\mathcal{A}}
\newcommand{\cc}{\mathcal{C}}

\newcommand{\cp}{\mathcal{P}}

\newcommand{\piu}{\pi_{\rm u}}
\newcommand{\ex}{{\rm ex}}

\usepackage[nameinlink]{cleveref}
\theoremstyle{plain}
\newtheorem{theorem}{Theorem}[section]
\crefname{theorem}{Theorem}{Theorems}

\newtheorem{proposition}[theorem]{Proposition}
\crefname{proposition}{Proposition}{Propositions}

\crefname{corollary}{Corollary}{Corollaries}

\newtheorem{lemma}[theorem]{Lemma}
\crefname{lemma}{Lemma}{Lemmas}

\crefname{conjecture}{Conjecture}{Conjectures}

\crefname{problem}{Problem}{Problem}

\newtheorem{claim}[theorem]{Claim}
\crefname{claim}{Claim}{Claims}

\crefname{observation}{Observation}{Observations}

\crefname{setup}{Setup}{Setups}

\crefname{myth}{Myth}{Myths}

\crefname{fact}{Fact}{Facts}

\crefname{algorithm}{Algorithm}{Algorithms}

\crefname{remark}{Remark}{Remarks}

\crefname{example}{Example}{Examples}

\theoremstyle{definition}
\newtheorem{definition}[theorem]{Definition}
\crefname{definition}{Definition}{Definitions}

\crefname{construction}{Construction}{Constructions}

\newtheorem{question}[theorem]{Question}
\crefname{question}{Question}{Questions}

\numberwithin{equation}{section}

\crefformat{section}{\S#2#1#3}
\crefformat{subsection}{\S#2#1#3}
\crefformat{subsubsection}{\S#2#1#3}
\crefrangeformat{section}{\S\S#3#1#4 to~#5#2#6}
\crefmultiformat{section}{\S\S#2#1#3}{ and~#2#1#3}{, #2#1#3}{ and~#2#1#3}

\allowdisplaybreaks

\begin{document}
%\pagewiselinenumbers

\title[Extremal problems in uniformly dense
hypergraphs and digraphs]{Extremal problems in uniformly dense
hypergraphs and digraphs}
\author[H. Lin]{Hao Lin}
\address{School of Mathematical Sciences, University of Science and Technology of China, Hefei, China}
\email{baronlin001@gmail.com}
\author[G. Wang]{Guanghui Wang}
\author[W. Zhou]{Wenling Zhou}
\address{School of Mathematics, Shandong University, Jinan,  China.}
\email{\{\,ghwang\,|\,gracezhou\,\}@sdu.edu.cn}
\author[Y. Zhou]{Yiming Zhou}
\address{School of Mathematics, Shandong University, Jinan,  China.}
\email{202415555@mail.sdu.edu.cn}
%\thanks{G. Wang was supported by the National Key R\&D Program of China (2020YFA0712400) and the Natural Science Foundation of China (12231018).
%W. Zhou was supported by Natural Science Foundation of China  (12401457), the China Postdoctoral Science Foundation (2024M761780), the Natural Science Foundation of Shandong Province (ZR2024QA067) and Young Talent of Lifting engineering for Science and Technology in Shandong, China (SDAST2025QTA074).}
\keywords{Tur\'an problem, uniformly dense
hypergraphs, digraphs}

\begin{abstract}
The \emph{uniform Tur\'an density} $\pi_{\rm u}(F)$ of a $3$-uniform hypergraph (or $3$-graph) $F$ is the supremum of all $d$ such that there exist infinitely many $F$-free $3$-graphs $H$ in which every induced subhypergraph on a linearly sized vertex set has edge density at least $d$. 
Determining $\pi_{\rm u}(F)$ for a given $3$-graph $F$ was proposed by Erd\H{o}s and S\'os in the 1980s, yet only a few cases are known. In particular, it remains open whether $1/2$ can occur as a value of $\pi_{\rm u}$.

In this paper, we establish a novel connection between Tur\'an-type extremal problems for digraphs and uniform Tur\'an densities of $3$-graphs. Using digraph extremal results, we give the first verifiable conditions for $3$-graphs $F$ with $\pi_{\rm u}(F) = (r-1)/r$ and $\pi_{\rm u}(F) = (r-1)^2/r^2$ for all $r \ge 2$, and identify the corresponding $3$-graphs. In particular,  these $3$-graph classes contain some specific $3$-graphs, such as $K^{(3)-}_4$. 
We also present a sufficient condition ensuring $\pi_{\rm u}(F)=4/27$ and construct $3$-graphs satisfying it; in particular, our examples are different from the tight $3$-uniform cycles whose uniform Tur\'an density $4/27$ was determined in [\emph{Trans.\ Amer.\ Math.\ Soc.\ 376 (2023), 4765–4809}].
Finally, we give a short proof of the existence of $3$-graphs $F$ with $\pi_{\rm u}(F)=1/27$, 
originally established by Garbe, Kr\'al' and Lamaison [\emph{Israel J.\ Math.\ 259 (2024), 701–726}] via the hypergraph regularity method.
%Notably, this excludes the tight $3$-uniform cycles studied in [\emph{Trans.\ Amer.\ Math.\ Soc.\ 376 (2023), 4765–4809}], which also have uniform Tur\'an density $4/27$.
\end{abstract}

\maketitle
\thispagestyle{empty}
\vspace{-2.15em}

\section{Introduction}
\subsection{Tur\'an problems in graphs and hypergraphs}
Let $k \ge 2$ be an integer. A classical extremal problem for $k$-uniform hypergraphs (or $k$-graphs), introduced by Tur\'an~\cite{Tu-graph} about eighty years ago, asks to determine for a given $k$-graph $F$ its \textit{Tur\'an number} ${\rm ex}(n, F)$, the maximum number of edges in an $F$-free $k$-graph on $n$ vertices.
Tur\'an's theorem~\cite{Tu-graph} determines ${\rm ex}(n,K_r)$ for all complete graphs $K_r$ on $r$ vertices.
However, it is a long-standing open problem in extremal combinatorics to develop a general understanding of these numbers for hypergraphs. Ideally, one would like to determine them exactly, but even asymptotic results (for $k \ge 3$) are currently known only in certain cases; see the excellent survey by Keevash~\cite{Keevash-survey} for more details.
It is well known and not hard to observe that the sequence ${\rm ex}(n, F)/\binom{n}{k}$ is decreasing in $n$.
Thus, 
one often focuses on the \textit{Tur\'an density} $\pi(F)$ of $F$ defined by
$
\pi(F):=\lim_{n\to \infty}{{\rm ex}(n,F)}/{\binom{n}{k}}.
$
Tur\'an densities are well understood for graphs, i.e., $2$-graphs, see~\cite{E-Stone,E-Simonovits}.
In particular, let $\Pi^{(k)}=\{\pi(F): F \text{~is~a~} k\text{-graph}\}$. Erdős-Stone-Simonovits theorem~\cite{E-Stone, E-Simonovits} implies that
\[
\Pi^{(2)}=\{(r-1)/r: r\in \mathbb N_+\}.
\]
However, the extremal problems for hypergraphs are notoriously difficult, even for simple hypergraphs like the complete $3$-uniform  hypergraph $K^{(3)}_4$ on four vertices and $K^{(3)-}_4$, obtained by removing an edge from $K^{(3)}_4$.
Despite extensive efforts and attempts so far, the values of $\pi(K^{(3)}_4)$ and $\pi(K^{(3)-}_4)$ remain unresolved, see~\cite{k43-upbound, k43lowbound, k43-lowbound}.
Moreover, Conlon and Sch\"ulke~\cite{conlon2024hypergraphs} recently showed that $1/2$ is an accumulation point for the values in $\Pi^{(3)}$, yet they noted that, ``highlighting the depth of our ignorance about Tur\'an densities, it is not known if $1/2 \in  \Pi^{(3)}$.''

\subsection{Tur\'an problems in uniformly dense hypergraphs}
Most known and conjectured extremal constructions for Tur\'an-type problems  contain large independent sets, i.e., vertex sets of linear size spanning no edges. 
This led Erd\H{o}s and S\'os~\cite{E-Sos, Erdos-1990} in the 1980s
to propose a variant of this problem, restricting attention to $F$-free $3$-graphs that are uniformly dense on large subsets of the vertices.
Formally, given $d \in [0, 1]$ and $\mu>0$, a $3$-graph $H=(V, E)$ is said to be {\it uniformly $(d, \mu)$-dense} if for any $U\subseteq V$, we have $|\binom{U}{3}\cap E|\ge d\binom{|U|}{3}-\mu|V|^3$. The \emph{uniform Tur\'an density} $\pi_{{\rm u}}(F)$ for a given
$3$-graph $F$ is defined as
\[
\begin{split}
\pi_{{\rm u}}(F) = \sup \{ d\in [0,1] &: \text{for\ every\ } \mu>0 \ \text{and\ } n_0\in \mathbb{N},\ \text{there\ exists\ an\ } F \text{-free,~uniformly} \\
&\quad (d,\mu)\text{-dense}~  \text{$3$-graph~} H\ \text{with~} |V(H)|\geq n_0 \}.
\end{split}
\]
With this notation, Erd\H{o}s and S\'os  asked to determine $\pi_{{\rm u}}(K^{(3)-}_4)$ and $\pi_{{\rm u}}(K^{(3)}_4)$. 
However, determining $\pi_{{\rm u}}(F)$  of a given $3$-graph $F$ is also very challenging. The value $\pi_{\rm u}(K^{(3)-}_4)=1/4$ was recently determined by Glebov, Kr\'al' and Volec~\cite{k43minus-1}, and independently by Reiher, R\"odl and Schacht~\cite{k43minus-2}.
The conjecture that $\pi_{{\rm u}}(K^{(3)}_4)=1/2$ has been an urgent problem in this area since R\"odl~\cite{k43-rodl} gave a quasi-random construction in 1986. 
Beyond $K^{(3)-}_4$, Reiher, R\"odl and
Schacht~\cite{vanishing} characterised
all $3$-graphs $F$ with $\pi_{{\rm u}}(F)=0$. Let $\Pi_{\rm u}^{(3)}=\{\pi_{{\rm u}}(F): F \text{~is~a~} 3\text{-graph}\}$.
As a consequence of this characterization, $\Pi_{\rm u}^{(3)}$ is disjoint with the real interval $(0, 1/27)$.
Subsequently, using the hypergraph regularity method, Garbe, Kr\'al' and Lamaison~\cite{1/27}, Li and the first three authors~\cite{Hypergraph-llwzhou23}, Garbe,  Il'kovi{\v c}, Kr\'al',  Ku{\v c}er\'ak and Lamaison~\cite{8/27} constructed some families of $3$-graphs $F$ with $\pi_{{\rm u}}(F)=1/27$, $1/4$ and $8/27$, respectively.
Moreover, Buci{\'c}, Cooper, Kr\'al', Mohr and Munh\'a Correia~\cite{cycle-uniform} determined the uniform Tur\'an density of $3$-uniform tight cycles $C^{(3)}_\ell$ of length $\ell\ge 5$ (the uniform Tur\'an density is equal to $4/27$ if $3\nmid \ell$, and it is zero otherwise); 
Chen and Sch{\"u}lke~\cite{chen2022beyond} proved that $\pi_{\rm u}(F^{\star}_5)=1/4$, where $F^{\star}_5$ is the $3$-graph on five vertices obtained from $K^{(3)-}_4$ by adding a new vertex whose link graph is a matching of size $2$ on the vertices of $K^{(3)-}_4$.
Very recently, Lamaison~\cite{Ander} has provided a new approach to determining uniform Tur\'an densities that bypasses the use of the hypergraph regularity method.
To describe this approach, we need the notion of palettes, introduced by Reiher~\cite{reiher2020extremal} as a generalization of the construction of R\"odl~\cite{k43-rodl}.

\begin{definition}\label{def:palette}
A \emph{palette} $\mathcal P$ is a pair $(\mathcal C,\mathcal A)$, where $\mathcal C$ is a finite set, whose elements we call \emph{colors}, and $\mathcal A\subseteq \mathcal C^3$ is a set of ordered triples of colors, whose elements we call \emph{admissible triples}. In particular, for an admissible triple $(x, y, z)\in \mathcal A$, we refer to the
color $x$ as the {\it left} color, $y$ as the {\it top} color and $z$ as the {\it right} color. The \emph{density} of $\mathcal P$ is
\[
d(\mathcal P):={|\mathcal A|}/{|\mathcal C|^3}.
\]
Given a palette $\mathcal P=(\mathcal C,\mathcal A)$, we say that a $3$-graph $F$ is \emph{$\mathcal P$-colorable} if there exist a linear order $\prec$ on $V(F)$ and a function $\varphi:\partial F \to \mathcal C$ of the pairs of vertices covered by edges of $F$ such that every edge $\{u,v,w\}\in E(F)$ with $u\prec v\prec w$ satisfies $(\varphi(uv),\varphi(uw),\varphi(vw))\in \mathcal A$. 
\end{definition}

Palettes can be used to obtain lower bounds on uniform Tur\'an densities. Specifically, if $F$ is not $\mathcal P$-colorable for some palette $\mathcal P$, then $\pi_{\rm u}(F)\ge d(\mathcal P)$. The reason is that such $\mathcal P$ can be used to construct $F$-free uniformly $(d(\mathcal P),o(1))$-dense $3$-graphs. For example,  Rödl~\cite{k43-rodl} (see also~\cite{reiher2020extremal, k43minus-2}) gave a palette for each $r\ge2$,
\begin{equation}\label{eq:low bound palette}
\mathcal Q_r = ([r], \mathcal A(Q_r)), \text{~where~}  
\mathcal A (Q_r) = \{ (x, y, z) \in [r]^3 : x \ne y \}.
\end{equation}
Note that $d(\mathcal Q_r)=(r-1)/r$. 
Moreover, 
for each complete hypergraph $K^{(3)}_{r+2}$ on the vertex set $V:=\{1, 2, \dots, r+2\}$ and a function $\varphi: \binom{V}{2} \to [r]$, two of the $r+1$ pairs $\{1, i\}$ with $i\in \{2, 3,\dots, r+2\}$
containing the vertex $1$ must have the same color in $\varphi$,  which implies that $K^{(3)}_{r+2}$ is not $\mathcal Q_r$-colorable. Hence, we have
\begin{equation}\label{eq:complete hypergraph}
\pi_{\rm u}(K^{(3)}_{r+2})\ge \frac{r-1}{r}.
\end{equation}

It is natural to consider the \emph{palette Tur\'an density} $\pi^{\rm pal}_{\rm u}(F)$ of a given $3$-graph $F$, defined by
\[
\pi^{\rm pal}_{\rm u}(F)
:= \sup \{ d(\mathcal P) : \mathcal P \text{ is a palette and } F \text{ is not } \mathcal P\text{-colorable} \}.
\]
Lamaison~\cite{Ander} proved the following equivalence.

\begin{theorem}[\cite{Ander}]\label{Thm:ander}
For every $3$-graph $F$, we have $\pi_{\rm u}(F)=\pi^{\rm pal}_{\rm u}(F)$.
\end{theorem}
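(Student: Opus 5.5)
The identity splits into two inequalities, and I will prove them separately.

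\emph{Lower bound $\pi_{\rm u}(F)\ge\pi^{\rm pal}_{\rm u}(F)$.} This is the standard palette construction. Fix a palette $\mathcal P=(\mathcal C,\mathcal A)$ such that $F$ is not $\mathcal P$-colorable. Take $n$ vertices in a uniformly random linear order. Color every pair independently and uniformly at random from $\mathcal C$, and let $\{u,v,w\}$ with $u\prec v\prec w$ be an edge exactly when $(\varphi(uv),\varphi(uw),\varphi(vw))\in\mathcal A$.
\begin{itemize}
\item Any copy of $F$ in this hypergraph would make $F$ $\mathcal P$-colorable, using the restricted order and the restricted coloring of pairs. So the hypergraph is $F$-free.
\item For uniform density, fix $U$. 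The number of edges inside $U$ has expectation $d(\mathcal P)\binom{|U|}{3}$. It is a function of the pair colors, and each pair influences at most $n$ triples, so McDiarmid's inequality gives deviation probability $\exp(-\Omega(\mu^2 n^2))$.
\item This beats the $2^n$ choices of $U$, so with high probability the hypergraph is uniformly $(d(\mathcal P),\mu)$-dense.
\end{itemize}

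\emph{Upper bound $\pi_{\rm u}(F)\le\pi^{\rm pal}_{\rm u}(F)$.} This is the real content. Let $d>\pi^{\rm pal}_{\rm u}(F)$, and suppose $H$ is a large uniformly $(d,\mu)$-dense $3$-graph. The goal is to find $F$ in $H$, with the following steps.
\begin{enumerate}
\item Bound the number of colors. Since $F$ has a fixed number $m$ of vertices, it suffices to consider palettes with at most $C=C(F,d)$ colors. The plan is to reduce an arbitrary palette to a bounded one by a supersaturation or averaging argument. As a fallback, define a Ramsey-type parameter directly in terms of bounded palettes.
\item Take a random sample $S$ of $N$ vertices of $H$ and order them. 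By uniform density, every subset of $S$ still has edge density roughly $d$.
\item Define the auxiliary structure. For a pair $uv$ with $u,v\in S$, take its ``type'' to be the neighbourhood pattern $N_H(u,v)\cap S'$ on a further sample $S'$. This partitions pairs into boundedly many classes after a weak-regularity-style compression.
\item Form the reduced palette. Admissible triples are the class triples that carry density above a threshold $\varepsilon$. If this palette had density above $\pi^{\rm pal}_{\rm u}(F)$, then $F$ would be colorable in it, and a counting/embedding lemma would then realize $F$ in $H$.
\end{enumerate}

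The main obstacle is to avoid hypergraph regularity while still having such an embedding lemma. Lamaison's key idea is to iterate. Pick vertices $v_1\prec\dots\prec v_m$ one at a time, and maintain for each future vertex a large candidate set inside the joint link graphs. Colors then correspond to the ``pair classes'' read off a dependent random choice in those link graphs. Uniform density of $H$, applied to the candidate sets, guarantees at every step a density of at least $d-o(1)$ among admissible continuations. A compactness/averaging argument over the finitely many possible color patterns then shows that if every such reduced palette has $F$ colorable, the embedding never gets stuck.

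Concretely, I would first try induction on $m$ combined with a Ramsey argument. Apply Ramsey's theorem to the ordered sample to make the colors of pairs depend only on the relative order, which yields a genuine palette whose density is at least $d-o(1)$. Colorability of $F$ then closes the argument. Making that density transfer rigorous is where I expect the difficulty to lie.
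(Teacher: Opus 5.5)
\textbf{There is a genuine gap: your lower bound is correct, but the upper bound $\pi_{\rm u}(F)\le\pi^{\rm pal}_{\rm u}(F)$ is not proved.} The paper does not prove this theorem; it quotes it from Lamaison as a black box, so your attempt has to stand on its own.

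Your lower bound is the standard random-palette construction and it works. McDiarmid's inequality, with Lipschitz constants at most $n$ over at most $\binom n2$ pairs and deviation $\mu n^3$, gives failure probability $e^{-\Omega(\mu^2 n^2)}$, which beats the $2^n$ choices of $U$.

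The upper bound is the whole content of the theorem, and the step that would carry it is asserted rather than supplied: ``if $F$ is colorable in the reduced palette, a counting/embedding lemma realizes $F$ in $H$.'' Nothing in your set-up provides such a lemma.
\begin{itemize}
\item A partition of pairs by compressed link patterns $N_H(u,v)\cap S'$ comes with no counting lemma. A palette coloring of $F$ prescribes the joint behaviour of the three pairs inside each edge. Counting such configurations requires the pair classes, and the triads they span, to be regular in the sense of the strong hypergraph regularity lemma.
\item The `candidate set' paragraph does not close this either. Uniform density of a candidate set $U$ says that $U$ spans many edges. It does not say that a new vertex $w\in U$ forms edges $v_iv_jw$ whose pairs $v_iw$, $v_jw$ have prescribed types.
\end{itemize}
In Lamaison's argument, this embedding step comes from the reduced-hypergraph characterization of Reiher, R\"odl and Schacht, which packages hypergraph regularity with its embedding lemma. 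The new ingredient there is a Ramsey-type extraction of a palette from a dense reduced hypergraph with many indices.

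Your concrete fallback fails for a structural reason, not just for lack of rigor.
\begin{itemize}
\item Ramsey's theorem on the coloured pairs of an ordered vertex sample returns a set $S$ whose size is only guaranteed to be logarithmic. Pairs of an ordered set have a single order type, so every pair in $S$ gets the same colour.
\item The only information left is which triples of $S$ are edges. Uniform density says nothing about a set of size $o(n)$, because the error term $\mu|V(H)|^3$ swamps $\binom{|S|}{3}$. In particular, $H[S]$ may be empty in an $F$-free $H$, so no density is transferred.
\item Density survives homogenisation only when Ramsey is applied to \emph{indices} (vertex classes) of a reduced structure. There, every triple of indices must individually carry a structure on pair classes of density at least $d-o(1)$. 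The common homogeneous structure is then a palette of that density, and the embedding lemma turns a coloring of $F$ into a copy in $H$.
\end{itemize}
Relatedly, your Step 1 bounds the wrong object. The hypothesis $d>\pi^{\rm pal}_{\rm u}(F)$ already covers palettes of every size. What actually has to be controlled is the number of pair types over which the Ramsey argument runs, and this must be bounded independently of how many indices that argument needs.
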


Using~\cref{Thm:ander}, Lamaison~\cite{Ander} proved that $\left\{\frac{1}{2}-\frac{1}{2r}: r\ge 2\right\}\subset \Pi_{\rm u}^{(3)}$, which implies that $1/2$ is an accumulation point for the values in $\Pi_{\rm u}^{(3)}$. 
However, it remains unknown whether there exists a $3$-graph $F$ with $\pi_{\rm u}(F)=\frac{1}{2}$. 
% It would be exciting if the lower bound in Inequality~\eqref{eq:complete hypergraph} is optimal, which would imply that $\Pi^{(2)} \subset \Pi_{\rm u}^{(3)}$. However, it should be pointed out that if this is true, it might be much more difficult to prove than $\pi_{\rm u}(K^{(3)}_4)=1/2$, as for $K^{(3)}_6$, there is a second, apparently sporadic, palette construction that yields the lower bound $\pi_{\rm u}(K^{(3)}_6)\ge \frac{3}{4}$ as well (see~\cite[Section 5.1]{k43minus-2}).
% In addition to the aforementioned results, 
% there are currently very few results on the uniform Tur\'an density of $3$-graphs.

In this work, we prove that $\Pi^{(2)} \subset \Pi_{\rm u}^{(3)}$ and $\left\{ (r-1)^2/{r^2}: r\ge 2\right\} \subset \Pi_{\rm u}^{(3)}$ by developing a novel method using the Turán numbers 
of digraphs to establish the existence of these values, building off the techniques of Lamaison~\cite{Ander}.

\subsection{Tur\'an problems in digraphs}
A \emph{digraph} (directed graph) is a pair $D=(V(D),A(D))$, where $V(D)$ is a vertex set and $A(D)$ is a set of ordered pairs of distinct vertices, called the \emph{arc} (or \emph{directed edge}) set. 
Given a digraph $D$, its \emph{Tur\'an number} and \emph{Tur\'an density} are defined analogously to the graph case as follows:
\[
\mathrm{ex}(n,D)
:= \max\{ |A(G)| : G \text{ is a $D$-free digraph on } n \text{ vertices} \}
\quad\text{and}\quad
\pi(D):=\lim_{n\to\infty}\frac{\mathrm{ex}(n,D)}{n^2-n}.
\]
The Tur\'an problem for digraphs is similar to that for graphs and has also attracted considerable attention. 
Given a digraph $D$, its \emph{underlying graph} is the simple graph $G$ with
\[
V(G)=V(D)\quad\text{and}\quad
\{u,v\}\in E(G)\ \text{whenever}\ uv\in A(D)\ \text{or}\ vu\in A(D).
\]
In 1970, Brown and Harary~\cite{brown1970extremal} extended Tur\'an's theorem~\cite{Tu-graph} to digraphs whose underlying graph is $K_r$, and revealed relationships between the corresponding Turán numbers and ${\rm ex}(n,K_r)$.
A \emph{tournament} on $r$ vertices, denoted by $T_r$, is an orientation of $K_r$, that is, the digraph obtained from $K_r$ by replacing each edge with exactly one arc.
Let $\overleftrightarrow{K_r}$ denote the complete digraph on $r$ vertices, that is, the digraph obtained from $K_r$ by replacing each edge with two arcs, one in each direction.
Brown and Harary~\cite{brown1970extremal} determined the Tur\'an numbers of all tournaments and complete digraphs.

\begin{theorem}[\cite{brown1970extremal}]\label{Thm:one-digraphs}
Given $r \ge 3$, let $T_r$ be any tournament on $r$ vertices and $\overleftrightarrow{K_r}$ the complete digraph on $r$ vertices. Then, for all $n \in \mathbb{N}$, we have
\[
{\rm ex}(n,T_r) = 2\,{\rm ex}(n,K_r) \quad\text{and}\quad {\rm ex}(n, \overleftrightarrow{K_r}) = \binom{n}{2} + {\rm ex}(n,K_r).
\]
\end{theorem}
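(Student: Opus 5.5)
The approach is a reduction to Tur\'an's theorem through the underlying-graph correspondence. For a digraph $G$ on $n$ vertices, define two simple graphs on $V(G)$:
\begin{itemize}
\item $G_1$, whose edges are the pairs $\{u,v\}$ joined by at least one arc;
\item $G_2$, whose edges are the pairs joined by both arcs $uv$ and $vu$.
\end{itemize}
Then $|A(G)| = e(G_1)+e(G_2)$ and $G_2\subseteq G_1$. Each of the two identities is proved by a lower-bound construction followed by an upper bound derived from this decomposition.

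\emph{Lower bounds.} Let $T$ be the Tur\'an graph $T_{r-1}(n)$, which is $K_r$-free and has ${\rm ex}(n,K_r)$ edges.
\begin{itemize}
\item For $T_r$: replace each edge of $T$ by both arcs. The underlying graph is $K_r$-free, so no tournament on $r$ vertices can appear, since its underlying graph is $K_r$. This digraph has $2\,{\rm ex}(n,K_r)$ arcs.
\item For $\overleftrightarrow{K_r}$: take a transitive tournament on all $n$ vertices (an arc $uv$ for every $u<v$), and add the reverse arc of every edge of $T$. A copy of $\overleftrightarrow{K_r}$ needs all pairs among its $r$ vertices to be double arcs. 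The double-arc graph here is exactly $T$, which is $K_r$-free, so no copy exists. This digraph has $\binom n2+{\rm ex}(n,K_r)$ arcs.
\end{itemize}

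\emph{Upper bound for $\overleftrightarrow{K_r}$.} If $G$ is $\overleftrightarrow{K_r}$-free, then $G_2$ is $K_r$-free, because a $K_r$ in $G_2$ spans a complete digraph. Hence $e(G_2)\le{\rm ex}(n,K_r)$. Trivially $e(G_1)\le\binom n2$, and adding the two bounds gives the claim.

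\emph{Upper bound for a fixed tournament $T_r$.} This is the main step. Suppose $G$ is $T_r$-free. We want $e(G_1)+e(G_2)\le 2\,{\rm ex}(n,K_r)$. The key claim is that any $K_r$ in $G_1$ containing many double edges can be oriented to contain $T_r$. More precisely, if a set $S$ of $r$ vertices spans a $K_r$ in $G_1$, then $G[S]$ is a "semicomplete" digraph. I would try to prove that $G[S]$ fails to contain $T_r$ only if some pair in $S$ is single-arc in a way forced by $T_r$.

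This suggests the following weighting argument. Give each edge of $G_1$ weight $1$ if it is single-arc and weight $2$ if it is double-arc; the total weight is $|A(G)|$. We need a weighted Tur\'an theorem: if every $r$-clique of $G_1$ contains at least one weight-$1$ edge, then the total weight is at most $2\,{\rm ex}(n,K_r)$. The hypothesis is justified because an $r$-clique with all edges double-arc contains every tournament on $r$ vertices.

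To prove the weighted bound, I would use induction on $n$ in the style of Tur\'an's proof. Remove an $r$-clique, or more robustly use Zykov symmetrization on the weighted graph: the weighted degree is $\deg_{G_1}+\deg_{G_2}$, and nonadjacent vertices can be cloned. Alternatively, argue via $G_2$ directly. Split by $\omega(G_2)$:
\begin{itemize}
\item Since $G_2$ is $K_r$-free, $e(G_2)\le {\rm ex}(n,K_r)$.
\item It remains to bound $e(G_1)$. Here $G_1$ need not be $K_r$-free, so instead I would bound $e(G_1)+e(G_2)$ jointly, via a weighted clique-counting / Motzkin--Straus argument. Write the Lagrangian $\sum_{uv} w_{uv}x_ux_v$ with weights in $\{0,1,2\}$. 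Show that the maximum is attained on a support in which every pair has weight $2$ (merging a weight-$1$ pair does not decrease the value). Then the support is a $G_2$-clique, so it has fewer than $r$ vertices, giving the density bound $2\cdot\frac{r-2}{r-1}\cdot\frac12$.
\end{itemize}

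Exactness for all $n$, rather than just asymptotically, then requires a Tur\'an-type induction instead of only the Lagrangian. I expect this passage from the weighted Lagrangian to an exact bound to be the main obstacle. The first thing I would try is Zykov symmetrization on the weight function.
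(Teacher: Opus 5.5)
Your lower-bound constructions and your upper bound for $\overleftrightarrow{K_r}$ are correct. The paper itself gives no proof of this theorem; it is quoted from Brown and Harary, so I am judging your argument on its own.

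The upper bound for a tournament $T_r$ has a genuine gap: the weighted Tur\'an statement you reduce to is false. Its hypothesis, that every $r$-clique of $G_1$ contains a weight-$1$ edge, says exactly that $G_2$ is $K_r$-free, i.e.\ that $G$ is $\overleftrightarrow{K_r}$-free. Your own second construction (transitive tournament plus the reversed Tur\'an edges) satisfies this hypothesis and has $\binom n2+{\rm ex}(n,K_r)>2\,{\rm ex}(n,K_r)$ arcs for $n\ge r$. So replacing ``$T_r$-free'' by ``no $r$-set whose pairs are all double'' discards exactly the information that separates the two extremal numbers, and no symmetrization can recover it.

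The Lagrangian step fails for the same reason. For a weight-$1$ pair $uv$, the term $x_ux_v$ is strictly concave under moving mass between $u$ and $v$, so merging can strictly decrease the value. Two vertices joined by a single arc have Lagrangian $1/4$, while the merged vertex gives $0$. Had that step been valid, it would have proved the false weighted bound.

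What you need is a larger family of $r$-vertex configurations that all contain $T_r$, combined with Tur\'an's exact induction. The key fact is this: an $r$-vertex digraph in which every pair spans at least one arc and the single-arc pairs form a matching contains every $r$-vertex tournament. To see it, pair up the vertices of $T_r$ and map each pair onto a single-arc pair with the correct orientation; this is Proposition~\ref{pro:sums}.

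Now let $S$ be an $(r-1)$-set spanning the most arcs of a $T_r$-free $G$.
\begin{itemize}
\item If every $v\notin S$ sends at most $2r-4$ arcs to $S$, then by induction $|A(G)|\le (r-1)(r-2)+(2r-4)(n-r+1)+2\,{\rm ex}(n-r+1,K_r)=2\,{\rm ex}(n,K_r)$. The last equality is the usual identity for Tur\'an graphs: remove one vertex from each part.
\item Otherwise some $v$ sends at least $2r-3$ arcs to $S$. Swap $v$ with each $w\in S$ and use the maximality of $S$. This shows every vertex of $S\cup\{v\}$ misses at most one arc within $S\cup\{v\}$. Hence $G[S\cup\{v\}]$ is one of the configurations above and contains $T_r$.
\end{itemize}
This is how the proof of Theorem~\ref{Thm:copy-three-digraphs} opens, up to Claim~\ref{clm:G2-max}. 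For a single tournament, that opening already completes the proof for every $r\ge 3$, with no Lagrangian or symmetrization needed.
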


In addition to tournaments and complete digraphs, Brown and Harary~\cite{brown1970extremal} also studied the sum of two tournaments whose structure lies between $T_r$ and $\overleftrightarrow{K_r}$. Specifically, we define this sum operation for digraphs as follows. Given two vertex-disjoint digraphs $D$ and $D'$, let $D\oplus D'$ be the \emph{sum} of $D$ and $D'$, that is, the digraph on $V(D)\cup V(D')$ obtained from $D$ and $D'$ by adding all possible arcs in both directions between $V(D)$ and $V(D')$, i.e.,
\[
A(D\oplus D') = A(D)\cup A(D') \cup (V(D)\times V(D')) \cup (V(D')\times V(D)).
\]

\begin{theorem}[\cite{brown1970extremal}]\label{Thm:two-digraphs}
Given $r\ge 6$, let $T_{r_1}$ and $T_{r_2}$ be any two tournaments with $r_1+r_2=r$ and $|V(T_{r_i})|=r_i$ for $i\in\{1,2\}$. Then, for every $n\in\mathbb N$,
\[
{\rm ex}(n,T_{r_1}\oplus T_{r_2}) = 2\,{\rm ex}(n,K_r).
\]
\end{theorem}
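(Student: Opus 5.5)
The lower bound is a blow-up construction. Take the Tur\'an graph on $n$ vertices with $r-1$ parts, and replace each of its edges by two opposite arcs. This digraph has exactly $2\,{\rm ex}(n,K_r)$ arcs, and its underlying graph is $K_r$-free. Every copy of $T_{r_1}\oplus T_{r_2}$ has underlying graph $K_r$, so this digraph contains no copy of it. Hence ${\rm ex}(n,T_{r_1}\oplus T_{r_2})\ge 2\,{\rm ex}(n,K_r)$.

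For the upper bound I will argue by induction on $n$; the cases $n<r$ are trivial. Let $G$ be a digraph on $n$ vertices with more than $2\,{\rm ex}(n,K_r)$ arcs, and call a pair $\{u,v\}$ \emph{double} if both $uv$ and $vu$ are arcs. The first step uses \cref{Thm:one-digraphs} to find a set $X$ of $r-2$ vertices that pairwise span double pairs. By that theorem, ${\rm ex}(n,\overleftrightarrow{K_{r-2}})=\binom n2+{\rm ex}(n,K_{r-2})\approx \bigl(1-\tfrac{1}{2(r-3)}\bigr)n^2$, while $2\,{\rm ex}(n,K_r)\approx\bigl(1-\tfrac1{r-1}\bigr)n^2$. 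Since $r-1<2(r-3)$ exactly when $r\ge 6$, the second quantity is larger, and this is where the hypothesis $r\ge 6$ enters. To make this exact for every $n$, not just asymptotically, I will compare the Tur\'an numbers directly via the standard formula for ${\rm ex}(n,K_s)$. Small $n$ may need a short separate check.

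Next, let $Y$ be the set of vertices outside $X$ that form a double pair with every vertex of $X$. Suppose $u,v\in Y$ span at least one arc, say $uv$. Then $X\cup\{u,v\}$ induces an underlying $K_r$ in which every pair is double except possibly $\{u,v\}$. This contains $T_{r_1}\oplus T_{r_2}$. Place $u,v$ in the part of size $r_1\ge 2$ (taking $r_1\ge 2$, as we may) so that $u\to v$ matches some arc of $T_{r_1}$. Every other required arc, whether inside a part or between the parts, is present because it lies in a double pair. Therefore $Y$ is independent in the underlying graph. Moreover, every vertex of $V(G)\setminus(X\cup Y)$ sends at most $2(r-2)-1$ arcs to or from $X$.

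The final step is the counting, and I expect it to be the main obstacle. The obvious count does not close: removing $X$ and applying induction to $G-X$ loses roughly one arc per outside vertex relative to the Tur\'an recursion. That recursion removes $r-1$ vertices, not $r-2$, since $2\,{\rm ex}(n,K_r)-2\,{\rm ex}(n-r+1,K_r)=2\binom{r-1}2+2(r-2)(n-r+1)$. So I will instead use $Y$. The plan is to choose $X$ maximizing $|Y|$, or alternatively to run a Zykov-type symmetrization on the weighted graph with weights $w(uv)\in\{0,1,2\}$ counting arcs. One then shows that the extremal weight function may be taken complete $(r-1)$-partite with all weights $2$ across parts. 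The key local claim for the symmetrization is that replacing a vertex by a clone of a vertex of no smaller weighted degree, with the pair between them of weight $0$, creates no forbidden configuration. To keep the configuration monotone under this operation, I will replace the family of all copies of $T_{r_1}\oplus T_{r_2}$ by a slightly stronger but clone-stable forbidden family: underlying $K_r$'s in which all pairs are double except for a set of single pairs contained in one part of size at most $r_1$, with orientations arbitrary. The extremal argument then goes through with the independence of $Y$ doing the work. I will attempt the symmetrization first, and fall back on the induction with the $Y$ count if the clone-stability step fails.
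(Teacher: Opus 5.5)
Your lower bound is fine, but the first step of the upper bound fails, and everything after it depends on that step. The comparison is reversed. $r-1<2(r-3)$ means $\frac{1}{r-1}>\frac{1}{2(r-3)}$, so for $r\ge 6$ it is $2\,{\rm ex}(n,K_r)\approx(1-\frac{1}{r-1})n^2$ that is the \emph{smaller} quantity; the comparison goes your way only for $r<5$. For a concrete example, split the vertices into $r-3$ equal classes, put a tournament inside each class, and put both arcs between any two vertices in different classes. The double pairs form an $(r-3)$-partite graph, so there is no $\overleftrightarrow{K_{r-2}}$. Yet there are about $(1-\frac{1}{2(r-3)})n^2>2\,{\rm ex}(n,K_r)$ arcs. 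So $X$ need not exist, and the set $Y$ and its independence have nothing to stand on.

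The fallback does not repair this. Your clone-stable family contains digraphs that do not contain $D=T_{r_1}\oplus T_{r_2}$. With $r_1=r_2=3$ it contains $C_3\oplus\overleftrightarrow{K_3}$, which does not contain $T_3\oplus T_3$ with both summands transitive: its three single pairs would have to span one summand, which would then be cyclic. Hence a $D$-free digraph need not avoid your family, and an arc bound for digraphs avoiding that family gives no upper bound on ${\rm ex}(n,D)$; the reduction runs the wrong way. The cloning step is also unjustified as stated. Replacing $u$ by a clone of $v$ changes the total weight by $d_w(v)-d_w(u)-w(uv)$, which is negative when $d_w(v)=d_w(u)$ and $w(uv)>0$.

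Beyond this, the statement as quoted is false in one case, so no argument that ignores the internal structure of the two tournaments can succeed. Take $T_{r_1}=T_{r_2}=C_3$, so $r=6$, and use the construction above with $3$ classes whose internal tournaments are transitive. In a copy of $C_3\oplus C_3$ every pair joining the two summands must be a double pair. So the summands use disjoint sets of classes, and one of them lies inside a single class, which is acyclic. Thus this digraph is $C_3\oplus C_3$-free. For $n=30$ (classes of size $10$) it has $870-135=735$ arcs, whereas $2\,{\rm ex}(30,K_6)=720$.

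In general, suppose $T_{r_i}$ cannot be partitioned into fewer than $k_i$ transitive subtournaments. Then $k_1+k_2-1$ transitive classes give a $D$-free digraph with about $(1-\frac{1}{2(k_1+k_2-1)})n^2$ arcs. For $r\ge 6$ this beats $(1-\frac{1}{r-1})n^2$ only for $C_3\oplus C_3$. Still, any correct proof must account for this obstruction, and a symmetrization that forces the complete $(r-1)$-partite weighting does not.

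The paper only cites this theorem. Its own argument for sums of three tournaments, which covers two summands only for $r\ge 11$, needs no double clique. It takes a minimal counterexample and a densest $(r-1)$-vertex subdigraph, so the count is against $f(n-r+1,r)$---exactly the recursion you noticed was missing. It then shows that a densest $r$-vertex subdigraph is a sum of copies of $T_1$ and $T_2$. Finally, it bounds the nearly fully attached vertices using independent classes and the minimum-degree condition. Its example $D_{10}$ after \cref{Thm:three-digraphs} is the same transitive-class construction.
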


%\section{Preliminaries and main results}
\subsection{Our results}
In this paper, we show that uniform Tur\'an densities exhibit an interesting correspondence with Tur\'an numbers for digraphs. 
By combining~\cref{Thm:one-digraphs,Thm:two-digraphs} with Tur\'an's theorem~\cite{Tu-graph}, we obtain 
\[
\pi(T_{r})=\pi(T_{r_1}\oplus T_{r_2})=\pi(K_{r})=\frac{r-2}{r-1}
\quad\text{for } r=r_1+r_2.
\]
We begin by identifying such digraphs through the following definition.

\begin{definition}\label{def:good-digraphs}
For an integer $r\ge 3$, we say that an $r$-vertex digraph $D$ is \emph{$r$-good} if $
{\rm ex}(n,D)=2\,{\rm ex}(n,K_r)$ for all $n\in\mathbb N$. Throughout this paper, we write $\mathcal{F}_r=\{D: D \text{~is~an } r\text{-good~digraph} \}$.
\end{definition}

To state our results, we introduce two types of palettes associated with digraphs, which will remain fixed throughout the paper.

Let $r\ge 3$ and $D_r$ be an $r$-vertex digraph. We define:
\begin{itemize}
\item The \emph{left palette}
\[
\mathcal Q^{L}_{D_r}=(\mathcal C_1\incup\mathcal C_2,\,
\mathcal A(\mathcal Q^{L}_{D_r}))~
\]
where $\mathcal C_1$ and $\mathcal C_2$ are disjoint color sets with $|\mathcal C_1|=r$ and $\mathcal C_2=\{c_{ab}:(a,b)\in\mathcal C_1\times\mathcal C_1\}$.
Let $V(D_r)=\mathcal C_1$. The family of admissible triples is defined by
\[
\mathcal A(\mathcal Q^{L}_{D_r})
=\{(a,b,c_{ab})\in\mathcal C_1\times\mathcal C_1\times\mathcal C_2:(a,b)\in A(D_r)\}.
\]

\item The \emph{right palette}
\[
\mathcal Q^{R}_{D_r}=(\mathcal C_3\incup\mathcal C_4,\,
\mathcal A(\mathcal Q^{R}_{D_r})),
\]
where $\mathcal C_3$ and $\mathcal C_4$ are disjoint color sets with $|\mathcal C_3|=r$ and $\mathcal C_4=\{c_{ab}:(a,b)\in\mathcal C_3\times\mathcal C_3\}$.
Let $V(D_r)=\mathcal C_3$. The family of admissible triples is defined by
\[
\mathcal A(\mathcal Q^{R}_{D_r})
=\{(c_{ab},a,b)\in\mathcal C_4\times\mathcal C_3\times\mathcal C_3:(a,b)\in A(D_r)\}.
\]
\end{itemize}

Given two palettes $\mathcal P_1=(\mathcal C_1,\mathcal A_1)$ and $\mathcal P_2=(\mathcal C_2,\mathcal A_2)$, their \emph{union} $\mathcal P_1\incup\mathcal P_2$ is the palette with color set $\mathcal C_1\incup\mathcal C_2$ and admissible set $\mathcal A_1\incup\mathcal A_2$.
The following theorem provides the first verifiable condition ensuring that the uniform Tur\'an density of a $3$-graph equals $\frac{r-2}{r-1}$ for all $r\ge3$.

\begin{theorem}\label{Thm:=r-1/r}
For any $r\ge3$ and any (not necessarily distinct) digraphs $D_r,D'_r\in\mathcal F_r$, 
let $\mathcal Q_{D_r\incup D'_r}=\mathcal Q^{L}_{D_r}\incup\mathcal Q^{R}_{D'_r}$.
Then every $3$-graph $F$ that is $\mathcal Q_{D_r\incup D'_r}$-colorable but not $\mathcal Q_{r-1}$-colorable satisfies
\[
\pi_{\mathrm u}(F)=\frac{r-2}{r-1}.
\]
\end{theorem}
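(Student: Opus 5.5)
By \cref{Thm:ander} it suffices to show two bounds. The lower bound $\pi^{\rm pal}_{\rm u}(F)\ge \frac{r-2}{r-1}$ is immediate: $F$ is not $\mathcal Q_{r-1}$-colorable and $d(\mathcal Q_{r-1})=\frac{r-2}{r-1}$ by \eqref{eq:low bound palette}. For the upper bound we must show that $F$ is $\mathcal P$-colorable for every palette $\mathcal P=(\mathcal C,\mathcal A)$ with $d(\mathcal P)>\frac{r-2}{r-1}$. Since $F$ is $\mathcal Q_{D_r\incup D'_r}$-colorable, it is enough to build a \emph{palette homomorphism} from $\mathcal Q_{D_r\incup D'_r}$ to $\mathcal P$. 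By this we mean a map $\psi$ on colors that sends every admissible triple of $\mathcal Q_{D_r\incup D'_r}$ to an admissible triple of $\mathcal P$. Composing $\psi$ with a $\mathcal Q_{D_r\incup D'_r}$-coloring of $F$, and keeping the same vertex order, then gives a $\mathcal P$-coloring of $F$.

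The homomorphism will come from two auxiliary digraphs on the vertex set $\mathcal C$, with $n:=|\mathcal C|$. For the left part, let $G_L$ have an arc $(a,b)$, with $a\neq b$, whenever some $z\in\mathcal C$ satisfies $(a,b,z)\in\mathcal A$. Suppose $G_L$ contains a copy of $D_r$ via an injection $\iota:V(D_r)\to\mathcal C$. Set $\psi(a)=\iota(a)$ on $\mathcal C_1$, and set $\psi(c_{ab})$ to be a witness $z$ for the arc $(\iota(a),\iota(b))$; put $\psi(c_{ab})$ arbitrary if $(a,b)\notin A(D_r)$. This is well defined because each $c_{ab}$ is its own color. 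Symmetrically, let $G_R$ have an arc $(a,b)$ whenever some $x$ satisfies $(x,a,b)\in\mathcal A$. A copy of $D'_r$ in $G_R$ then defines $\psi$ on $\mathcal C_3\incup\mathcal C_4$. The color sets $\mathcal C_1\incup\mathcal C_2$ and $\mathcal C_3\incup\mathcal C_4$ are disjoint, so the two maps combine into a single homomorphism on the union palette. The images may overlap in $\mathcal C$, but this does no harm.

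It remains to find copies of $D_r$ in $G_L$ and $D'_r$ in $G_R$, and the counting is where the $r$-goodness enters. For each ordered pair $(a,b)$ there are at most $n$ admissible triples with left and top colors $(a,b)$. Hence
\[
|\mathcal A|\le n\bigl(|A(G_L)|+L\bigr),
\]
where $L$ counts the loops $a=b$ and satisfies $L\le n$. This gives $|A(G_L)|\ge |\mathcal A|/n-n> \frac{r-2}{r-1}n^2-n$. By Tur\'an's theorem and the definition of $\mathcal F_r$,
\[
{\rm ex}(n,D_r)=2\,{\rm ex}(n,K_r)\le \frac{r-2}{r-1}n^2-?\;.
\]
The main obstacle is precisely this loop term: the Tur\'an bound $2\,{\rm ex}(n,K_r)\le \frac{r-2}{r-1}n^2$ does not beat $\frac{r-2}{r-1}n^2-n$ outright.

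To handle it, we first blow up the palette. Replace each color by $t$ copies and let a triple of copies be admissible iff the underlying triple is. This preserves the density exactly, and any coloring into the blow-up projects to a coloring into $\mathcal P$. After blowing up, $n$ is replaced by $N=tn$. The number of non-loop ordered pairs $(a,b)$ that carry admissible triples is then at least
\[
\frac{|\mathcal A|t^3}{N}-N=d(\mathcal P)\,N^2-N> \frac{r-2}{r-1}N^2\ge 2\,{\rm ex}(N,K_r)
\]
once $t$ is large, because $d(\mathcal P)>\frac{r-2}{r-1}$ is a fixed gap. So $G_L$ contains $D_r$, and the same argument gives $D'_r$ in $G_R$. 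The identical argument for $G_R$ completes the proof.
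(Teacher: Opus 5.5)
Your proof is correct. Its skeleton matches the paper's: Lamaison's equivalence, the auxiliary digraphs $G_L$ and $G_R$, bounding $|\mathcal A|$ by $|\mathcal C|$ times the number of pairs $(a,b)$ that carry an admissible triple, and a palette homomorphism built from a copy of $D_r$ (resp.\ $D'_r$) plus witness colors.

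Where you differ is exactly the step you flagged, the loops.

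\emph{The paper's route.} It keeps loops in $G_L$ and splits into two cases:
\begin{itemize}
\item If some $(a,a,c)\in\mathcal A$, it sends all of $\mathcal C_1$ to $a$ and all of $\mathcal C_2$ to $c$. This is a homomorphism whatever $D_r$ is.
\item If there is no loop, every counted pair is a genuine arc. So $|A(G_L)|\ge|\mathcal A|/|\mathcal C|>\frac{r-2}{r-1}|\mathcal C|^2\ge 2\,\mathrm{ex}(|\mathcal C|,K_r)$ with no error term, and $D_r\subseteq G_L$ follows directly.
\end{itemize}

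\emph{Your route.} The blow-up turns loops into arcs between distinct copies of a color. You pay an $O(N)$ loss, which the fixed gap $d(\mathcal P)-\frac{r-2}{r-1}>0$ absorbs for large $t$. Projecting back, you get a homomorphic, possibly non-injective, image of $D_r$ in $G_L$ with loops allowed. The paper's collapse onto a single loop is the extreme case of this.

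\emph{What each buys.} The paper's argument is shorter and needs no limiting step. Yours avoids the case split. More substantively, it only uses $\mathrm{ex}(N,D_r)\le(\frac{r-2}{r-1}+o(1))N^2$, i.e.\ the Tur\'an density bound $\pi(D_r)\le\frac{r-2}{r-1}$. The paper instead needs $\mathrm{ex}(n,D_r)\le\frac{r-2}{r-1}n^2$ at the arbitrary value $n=|\mathcal C|$. So your argument proves the theorem for an a priori larger class of digraphs than $\mathcal F_r$.

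One cosmetic point: delete the unfinished display ending in ``$-?$''. It plays no role in your final argument.
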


Given Theorem~\ref{Thm:=r-1/r}, it is natural to ask the following question:  
given two digraphs $D_r,D'_r \in \mathcal F_r$, does there exist a $3$-graph that is $\mathcal Q_{D_r \incup D'_r}$-colorable but not $\mathcal Q_{r-1}$-colorable? 
Let
\[
\mathcal F_r^*:=\{D_r\in \mathcal F_r :  \text{ the underlying graph of } D_r \text{ is } K_r\}.
\]
In this work, we show that if $D_r,D'_r \in \mathcal F_r^*$, then such $3$-graphs indeed exist.

\begin{theorem}\label{Thm:=r-1/r-exist}
For any $r\ge3$ and any (not necessarily distinct) digraphs $D_r,D'_r\in\mathcal F_r^*$, 
let $\mathcal Q_{D_r\incup D'_r}=\mathcal Q^{L}_{D_r}\incup\mathcal Q^{R}_{D'_r}$.  
There exists a $3$-graph $F$ that is $\mathcal Q_{D_r\incup D'_r}$-colorable but not $\mathcal Q_{r-1}$-colorable.
\end{theorem}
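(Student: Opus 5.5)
The plan is to build $F$ explicitly, together with an explicit linear order and coloring witnessing $\mathcal Q_{D_r\incup D'_r}$-colorability, and then show that no order and coloring with $r-1$ colors can work. The basic obstruction to $\mathcal Q_{r-1}$-colorability is the one behind \eqref{eq:complete hypergraph}. If $u$ is a vertex and $v_1,\dots,v_r$ are vertices after $u$ in the order such that $\{u,v_i,v_j\}\in E(F)$ for all $i\ne j$, then the $r$ pairs $uv_i$ must receive pairwise distinct colors. This is impossible with only $r-1$ colors. So the goal is a $3$-graph in which, for \emph{every} linear order, some vertex has a ``forward link'' containing a $K_r$. At the same time, $F$ must admit one specific order in which every edge is colored either as $(a,b,c_{ab})$ with $(a,b)\in A(D_r)$ or as $(c_{ab},a,b)$ with $(a,b)\in A(D'_r)$.

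First I would design a gadget realizing a $K_r$ in a forward link under the intended order. Take a root $x$ and vertices $y_1,\dots,y_r$ placed after $x$, and give $xy_i$ the color $i\in\mathcal C_1=V(D_r)$. Since the underlying graph of $D_r$ is $K_r$, for each pair $i\ne j$ at least one of $(i,j)$, $(j,i)$ is an arc. I would order the $y_i$ arbitrarily. For $y_i\prec y_j$ with $(i,j)\in A(D_r)$, put the edge $\{x,y_i,y_j\}$ and color $y_iy_j$ by $c_{ij}$. If instead only $(j,i)$ is an arc, I would not use the triple $\{x,y_i,y_j\}$ directly. Instead I would add an auxiliary vertex $z_{ij}$ placed between $y_i$ and $y_j$, with $xz_{ij}$ colored $j$. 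The edges are then chosen so that in every $\mathcal Q_{r-1}$-coloring the pair $xz_{ij}$ must ``play the role'' of $xy_j$; for instance $z_{ij}$ and $y_j$ are tied by a triple of the right-palette type $(c_{ab},a,b)$ from $D'_r$. This step uses that $D'_r$ also has complete underlying graph, so every needed pair of colors in $\mathcal C_3$ is joined by an arc in some direction. Every triple of $F$ is created together with its admissible coloring, so colorability under the intended order is immediate. I would need to check that each pair receives exactly one color, i.e.\ that pairs shared by two edges get consistent colors.

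Next I would make the obstruction robust under \emph{all} orders, since a $\mathcal Q_{r-1}$-coloring may use any order. The plan is to take many copies of the gadget arranged in an iterated/blow-up fashion. For example, I would replace each $y_i$ by a large class $Y_i$ and the root by a large class $X$, keeping all triples of the prescribed types and colors between classes. The intended coloring is then unchanged, since colors depend only on classes. For an arbitrary order $\prec$, I would pick a vertex of $X$ that precedes a large part of each $Y_i$ (by pigeonhole one class is mostly ``late''), and then use Ramsey's theorem inside the classes to find $r$ vertices, one per class, all after the chosen root and pairwise forming edges with it. That gives the forbidden forward-link $K_r$.

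The main obstacle I expect is making these two requirements compatible. The blow-up must contain enough triples to force a forward $K_r$ for every order, yet every triple must still be admissible in a single coloring. In particular, triples $\{x,y,y'\}$ with $y,y'$ in the same class $Y_i$ would need the color pattern $(i,i,\cdot)$, which is not admissible because $D_r$ has no loops. So the forced $K_r$ must be found across distinct classes, and the ordering argument must handle the possibility that the root class is interleaved with the $Y_i$. If this fails, I would fall back to a direct finite argument: check, via a short case analysis on the position of the root among the $y_i$ and $z_{ij}$, that the small gadget itself is not $\mathcal Q_{r-1}$-colorable.
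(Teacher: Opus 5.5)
\textbf{There is a genuine gap.} The obstruction you aim for is a forward $K_r$ at some vertex under \emph{every} order. In general this cannot exist, because it would also have to appear in your intended order.

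Suppose $u\prec v_1\prec\dots\prec v_r$, all triples $\{u,v_i,v_j\}$ are edges, and $\varphi$ is a $\mathcal Q_{D_r\incup D'_r}$-coloring. Take any $1<i<r$.
\begin{itemize}
\item In the triple with $v_1$, the color $\varphi(uv_i)$ is a top color, so it lies in $\mathcal C_1\cup\mathcal C_3$.
\item In the triple with $v_r$, it is a left color, so it lies in $\mathcal C_1\cup\mathcal C_4$.
\item Hence $\varphi(uv_i)\in\mathcal C_1$.
\end{itemize}
This forces the triples with $v_1$ and with $v_r$ to be of left type. Then every $\varphi(uv_j)$ lies in $\mathcal C_1$, and every triple $\{u,v_a,v_b\}$ is of left type. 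So the $r$ colors $\varphi(uv_j)$ are distinct elements of $\mathcal C_1$, and $(\varphi(uv_a),\varphi(uv_b))\in A(D_r)$ for all $a<b$. In other words, $D_r$ contains a spanning transitive tournament.

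Applying the same argument to the reversed order gives the same conclusion for $D'_r$. Now take $r=3$ and $D_3=C_3$, which lies in $\mathcal F_3^*$ by \cref{Thm:one-digraphs} (any regular tournament works similarly). Then no $\mathcal Q_{D_r\incup D'_r}$-colorable $3$-graph has your property, whatever gadget you choose. The auxiliary vertices $z_{ij}$ do not help: the argument above applies to any forward $K_r$. Moreover, a $\mathcal Q_{r-1}$-coloring only imposes $\varphi(uv)\ne\varphi(uw)$ when $u$ is the first vertex of an edge, so no configuration can force a pair to ``play the role'' of another. The right invariant is the \emph{chromatic number} of a forward link, not its clique number.

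\textbf{The order-robustness step also fails as stated.} If $\prec$ puts all of $X$ last, no root precedes anything. In the blow-up by left-type triples, each forward link of a $Y$-vertex is then bipartite between $\bigcup_j Y_j$ and $X$, so that $3$-graph is even $\mathcal Q_2$-colorable. This is a general phenomenon: $\mathcal Q^L_{D_r}\subseteq\mathrm{rev}(\mathcal Q_{r-1})$ via $\mathcal C_1\mapsto 1$, $\mathcal C_2\mapsto 2$. So right-type triples are needed precisely to defeat reversed orders, and a small fixed gadget will not be robust.

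\textbf{The paper avoids constructing $F$ at all.} By the criterion \cref{lem:classification palette} of Kr\'al', Ku\v{c}er\'ak, Lamaison and Tardos, it suffices to show $\mathcal Q_{D_r\incup D'_r}\not\subseteq\mathcal Q_{r-1}$ and $\mathcal Q_{D_r\incup D'_r}\not\subseteq\mathrm{rev}(\mathcal Q_{r-1})$. Any map $\mathcal C_1\to[r-1]$ identifies two colors $a\ne b$. Since the underlying graph of $D_r$ is $K_r$, they are joined by an arc, which yields an image triple $(\alpha,\alpha,\beta)$; this is not admissible in $\mathcal Q_{r-1}$. The same argument with $\mathcal C_3$ and $D'_r$ yields $(\beta,\alpha,\alpha)$, which is not admissible in $\mathrm{rev}(\mathcal Q_{r-1})$.

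\textbf{A repaired explicit construction, if you want one.}
\begin{itemize}
\item Take $N>(r-1)^r$ and the graph $G$ on $[r]\times[N]$, ordered by the second coordinate, in which $(i,k)$ and $(j,l)$ with $k<l$ are adjacent iff $(i,j)\in A(D_r)$.
\item With $\varphi(u,(i,k))=i$, every triple $\{u,v,w\}$ with $vw\in E(G)$ and $u$ placed first gets an admissible left-type coloring.
\item Yet $\chi(G)\ge r$. In any $(r-1)$-coloring, two indices $k<l$ induce the same map $[r]\to[r-1]$. That map identifies some $i\ne j$, and then $(i,k)(j,l)$ or $(j,k)(i,l)$ is a monochromatic edge.
\item Inside each edge of a linear hypergraph from \cref{lem:linear-graph}, place a root with a copy of $G$ after it (left-type triples), and the analogous graph for $D'_r$ before it (right-type triples).
\item For any order, take a monotone edge of the linear hypergraph. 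Whether the order is increasing or decreasing on it, the root has a forward link of chromatic number at least $r$, so $F$ is not $\mathcal Q_{r-1}$-colorable.
\end{itemize}
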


\cref{Thm:=r-1/r-exist} leads us to investigate which $r$-vertex digraphs belong to the set $F_r^*$. 
By~\cref{Thm:one-digraphs,Thm:two-digraphs}, we already know that every $r$-vertex tournament belongs to $\mathcal F_r^*$ for all $r\ge3$, and every $r$-vertex digraph that is the sum of two tournaments belongs to $\mathcal F_r^*$ for all $r\ge6$. 
The following theorem extends~\cref{Thm:two-digraphs} to the sum of three tournaments.  

\begin{theorem}\label{Thm:three-digraphs}
Let $r\ge 11$ and let $T_{r_1},T_{r_2},T_{r_3}$ be tournaments with $|V(T_{r_i})|=r_i$ for $i\in[3]$, where $0\le r_i\le r$ and $r_1+r_2+r_3=r$. Then $T_{r_1}\oplus T_{r_2}\oplus T_{r_3}\in \mathcal F_r^*$.
\end{theorem}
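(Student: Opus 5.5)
Throughout, write $D:=T_{r_1}\oplus T_{r_2}\oplus T_{r_3}$. Its underlying graph is $K_r$, so we must show ${\rm ex}(n,D)=2\,{\rm ex}(n,K_r)$ for every $n$, by proving a lower and an upper bound.

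\textbf{Lower bound.} Take the Tur\'an graph $T_{r-1}(n)$ and replace every edge by two opposite arcs. This digraph has exactly $2\,{\rm ex}(n,K_r)$ arcs. Its underlying graph is $K_r$-free, so it contains no digraph whose underlying graph is $K_r$, and in particular it is $D$-free.

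\textbf{Upper bound: setup.} For a digraph $G$ on $n$ vertices, split the pairs of its underlying graph into \emph{double} pairs (both arcs present, forming a graph $G_2$) and \emph{single} pairs (one arc, forming $G_1$). Then $|A(G)|=|G_1|+2|G_2|$. A copy of $D$ is obtained from any $r$-set $X=X_1\cup X_2\cup X_3$ with $|X_i|=r_i$ such that:
\begin{itemize}
\item every pair between different $X_i$ is double, and
\item every pair inside $X_i$ is either double or a single arc oriented as in $T_{r_i}$.
\end{itemize}
In particular, any $K_r$ in $G_2$ gives $D$. More generally, if $X$ spans a $K_r$ in $G_1\cup G_2$, the single pairs of $X$ span a graph $S$, and $S$ can be covered by three vertex-disjoint "blocks" of sizes $r_1,r_2,r_3$ on which the orientation matches a relabelled $T_{r_i}$, then $D\subseteq G$.

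\textbf{Upper bound: induction on $n$.} I would follow the Brown--Harary induction. The cases $n<r$ are trivial, since then $2\,{\rm ex}(n,K_r)=n(n-1)$. Now let $G$ be $D$-free on $n\ge r$ vertices with more than $2\,{\rm ex}(n,K_r)$ arcs.
\begin{enumerate}
\item Applying Tur\'an's theorem to the weighted count, the underlying graph $G_1\cup G_2$ contains a $K_{r-1}$; indeed it contains many copies, since $|G_1\cup G_2|\ge |A(G)|/2>{\rm ex}(n,K_r)$ gives a $K_r$. Among all $(r-1)$-sets $X$ spanning a clique in $G_1\cup G_2$, choose one maximising the number of double pairs.
\item Decompose
\[
|A(G)| = |A(G[X])| + \sum_{v\notin X} d^{\pm}_X(v) + |A(G-X)|,
\]
where $d^{\pm}_X(v)$ counts arcs between $v$ and $X$.
\item Use the identity ${\rm ex}(n,K_r)={\rm ex}(n-r+1,K_r)+(n-r+1)(r-2)+\binom{r-1}{2}$. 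Together with the induction hypothesis $|A(G-X)|\le 2\,{\rm ex}(n-r+1,K_r)$, it suffices to show
\[
|A(G[X])| + \sum_{v\notin X} d^{\pm}_X(v) \le 2\binom{r-1}{2} + 2(r-2)(n-r+1).
\]
\end{enumerate}
Equivalently, on average each outside vertex sends at most $2(r-2)$ arcs to $X$, with any deficiency of double pairs inside $X$ compensating for the excess.

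\textbf{Main obstacle: the local bound.} The heart of the proof is a local lemma: if $v\notin X$ has $d^{\pm}_X(v)\ge 2(r-2)+1$, then $v$ is adjacent to all of $X$ in the underlying graph, and at most one pair $vx$ is single. In that case $X\cup\{v\}$ spans a $K_r$ in which the single pairs are those of $G[X]$ plus at most one more. I then need to show that the maximality of the double-pair count in $X$ and $D$-freeness force $G[X]$ to contain "many" single pairs, or force few such vertices $v$. Concretely, I would show that any $K_r$ whose single pairs form a graph $S$ of small size admits a partition of its vertex set into three parts of sizes $r_1,r_2,r_3$ with $S$ inside the parts and oriented compatibly with the $T_{r_i}$. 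Such a partition would yield $D$, a contradiction.

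This embedding/partition step is where I expect the real work, and where $r\ge 11$ should enter. With $r\ge 11$ there is enough room, for instance when $r_1\ge\lceil r/3\rceil\ge 4$, to absorb the endpoints of a bounded number of single pairs into one block. The remaining case, where some $r_i$ are tiny (for example $r_3=0$), reduces directly to \cref{Thm:two-digraphs} or \cref{Thm:one-digraphs}.

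If the local lemma only gives a bound on average rather than for every $v$, I would fall back on a trading argument. Each single pair inside $X$ costs one arc compared with $2\binom{r-1}{2}$. I would show that each such single pair permits at most a bounded number of outside vertices to exceed $2(r-2)$, each by at most one, and then balance these two counts.
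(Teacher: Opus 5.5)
\textbf{There is a genuine gap, at the step you flag as the main obstacle.}

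Your lower bound is fine. The opening of your upper bound also matches the paper's first claim: pick an $(r-1)$-set $X$, and if every outside vertex sends at most $2(r-2)$ arcs to $X$, the identity ${\rm ex}(n,K_r)={\rm ex}(n-r+1,K_r)+(n-r+1)(r-2)+\binom{r-1}{2}$ closes the induction.

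The trading argument you describe cannot work. The number of outside vertices $v$ with $d^{\pm}_X(v)\ge 2r-3$ is not controlled by the number $s\le\binom{r-1}{2}$ of single pairs in $X$; it can be linear in $n$. Here is a concrete example.
\begin{itemize}
\item Let $r_1,r_2,r_3$ all be odd, e.g.\ $(5,3,3)$. Let $H=T_1\oplus T_2\oplus\cdots\oplus T_2$ with $(r-1)/2$ copies of $T_2$, and let $G$ be a balanced blow-up of $H$ with independent classes.
\item $H$ is $D$-free. In any copy of $D$ on $V(H)$, each single pair must lie inside one tournament block. So every block is a union of $T_2$-summands plus possibly the $T_1$-vertex, and at most one block has odd size.
\item Every $K_r$ in the underlying graph of $G$ is a transversal inducing $H$, so $G$ is $D$-free.
\item Every $(r-1)$-clique $X$ maximising the number of double pairs omits one class of a $T_2$-summand and has $s=(r-3)/2$. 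Yet every vertex of the omitted class (linearly many) sends exactly $2r-3$ arcs to $X$. If $X$ instead omits the $T_1$-class, those vertices send $2r-2$.
\end{itemize}
So the claim that ``each single pair permits at most a bounded number of vertices to exceed $2(r-2)$'' is false.

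In this example the required inequality survives only because vertices of the other paired classes send just $2r-5$ arcs to $X$. Your plan never uses such deficits, and they cannot be obtained locally: they must come from the hypothesis $|A(G)|>2\,{\rm ex}(n,K_r)$.

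Your embedding observation has the same limitation. It only works for single-pair graphs $S$ of constant size; already $\overleftrightarrow{K_{r-3}}\oplus C_3$ is $D$-free when all three tournaments are transitive. So it yields at best $s\ge c$ for a constant $c$.

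\textbf{The missing idea is the global argument the paper uses.}
\begin{itemize}
\item It takes a minimal counterexample, which gives every vertex total degree at least $2\frac{r-2}{r-1}(n-1)+1$.
\item It passes to a densest $r$-vertex subdigraph $G_2$, shown to be a sum of at most one $T_1$ and copies of $T_2$.
\item By counting, it shows that linearly many vertices have (near-)full connection to $G_2$, or to the enlarged configurations $G_3,G_4,G_5$.
\item These vertices are grouped by the $T_2$-summand to which they attach by single arcs. Each group spans no arcs, since otherwise a $T_3$-summand, a denser configuration, or $D$ itself appears. This is where the six-vertex tournament lemma and $r\ge 11$ are used.
\item The degree bound then caps each group at about $n/(r-1)$, and roughly $r/2$ groups are too few to match the counting lower bound.
\end{itemize}

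Separately, not all small-$r_i$ cases reduce to the Brown--Harary theorems. The case $r_3=0$ does, but e.g.\ $r_3=1$ does not.
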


We make the following remarks on~\cref{Thm:three-digraphs}:
\begin{itemize}
\item[(1)] The condition $r\ge11$ in~\cref{Thm:three-digraphs} cannot be weakened to $r\ge10$.  
Let $D_{10}=C_3\oplus C_3\oplus T'_4$,  where $C_3$ is the \emph{directed cycle} on three vertices, i.e., $A(C_3)=\{uv,vw,wu\}$, and $T'_4$ is a tournament on four vertices that contains a $C_3$.
We claim that ${\rm ex}(n,D_{10})$ is strictly larger than $2{\rm ex}(n,K_{10})$, i.e., $D_{10}\notin \mathcal F_{10}^*$.  
Indeed, consider the digraph
\[
D_n
= T^{\star}_{\lfloor n/5\rfloor}\oplus T^{\star}_{\lfloor (n+1)/5\rfloor}
\oplus T^{\star}_{\lfloor (n+2)/5\rfloor}\oplus T^{\star}_{\lfloor (n+3)/5\rfloor}
\oplus T^{\star}_{\lfloor (n+4)/5\rfloor},
\]
where $T^{\star}_s$ denotes the transitive tournament on $s$ vertices. 
It is easy to see that $D_n$ is $D_{10}$-free, since $D_n$ does not contain $C_3\oplus C_3\oplus C_3$. 
Moreover, for large $n$ we have
\[
|A(D_n)|>n(n-1)-5\binom{(n+4)/5}{2}
=\frac{9}{10}n^2-\frac{3}{2}n
>\frac{8}{9}n^2
\ge 2\,{\rm ex}(n,K_{10}).
\]

\item[(2)] \cref{Thm:three-digraphs} implies \cref{Thm:one-digraphs} and \cref{Thm:two-digraphs} for all $r\ge 11$, since $r_i$ can choose zero in~\cref{Thm:three-digraphs}.  
\end{itemize}

We now return to the left palette $\mathcal Q^{L}_{D_r}$ and the right palette $\mathcal Q^{R}_{D_r}$ for some $D_r\in\mathcal F^*_r$. 
One easily checks that a $3$-graph $F$ is $\mathcal Q^{L}_{D_r}$-colorable if and only if $F$ is $\mathcal Q^{R}_{D_r}$-colorable.
A natural question is whether, in~\cref{Thm:=r-1/r-exist}, the requirement that $F$ be $\mathcal Q_{D_r\incup D'_r}$-colorable  can be weakened to $F$ is $\mathcal Q^{L}_{D_r}$-colorable (equivalently, $\mathcal Q^{R}_{D_r}$-colorable). 
The following theorem shows that the condition of being $\mathcal Q_{D_r\incup D'_r}$-colorable cannot be weakened and yields a sufficient condition ensuring that the uniform Tur\'an density of a $3$-graph equals $\bigl(\frac{r-2}{r-1}\bigr)^2$ for all $r\ge3$. 
% We say that a tournament $D$ is \emph{transitive} if for any three distinct vertices $x,y,z\in V(D)$, the presence of arcs $xy$ and $yz$ in $D$ implies the arc $xz$.
For each $r\ge2$, we define the palette
\begin{equation}\label{eq:square-low bound palette}
\mathcal Q^{2}_r = ([r], \mathcal A (\mathcal Q^{2}_r)), \text{ with } 
\mathcal A (\mathcal Q^{2}_r) = \{ (x,y,z) \in [r]^3 : x \ne y \text{ and } y \ne z \}.
\end{equation}

\begin{theorem}\label{Thm:=r-1/r-square}
Given $r\ge3$, 
let $T_r$ be a transitive tournament on $r$ vertices. Then every $3$-graph $F$ that is $\mathcal Q^{L}_{T_r}$-colorable (or equivalently $\mathcal Q^{R}_{T_r}$-colorable) but not $\mathcal Q^{2}_{r-1}$-colorable satisfies
\[
\pi_{\mathrm u}(F) = \left( \frac{r-2}{r-1} \right)^2.
\]
\end{theorem}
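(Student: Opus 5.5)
By \cref{Thm:ander}, it suffices to prove two bounds on palette densities. For the lower bound: $F$ is not $\mathcal Q^2_{r-1}$-colorable, and $d(\mathcal Q^2_{r-1})=(r-2)^2/(r-1)^2$. This gives $\pi_{\rm u}(F)\ge\pi^{\rm pal}_{\rm u}(F)\ge ((r-2)/(r-1))^2$ immediately. The real work is the upper bound: every palette $\mathcal P=(\mathcal C,\mathcal A)$ with $d(\mathcal P)>((r-2)/(r-1))^2$ must admit a $\mathcal P$-coloring of $F$. Since $F$ is $\mathcal Q^L_{T_r}$-colorable, it suffices to show that $\mathcal Q^L_{T_r}$ "embeds" into $\mathcal P$. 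Concretely, we want distinct colors $a_1,\dots,a_r\in\mathcal C$ (allowing repeats if convenient) and, for each pair $i<j$, a color $c_{ij}$ with $(a_i,a_j,c_{ij})\in\mathcal A$. Here we order $T_r$ transitively as $1\to2\to\dots\to r$. Composing the coloring of $F$ with the map $a\mapsto a_i$, $c_{a_ia_j}\mapsto c_{ij}$ then yields a $\mathcal P$-coloring. Note this map need not be injective, since admissibility is all that is needed.

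To find such colors, define an auxiliary digraph $G$ on vertex set $\mathcal C$, with loops allowed, where $x\to y$ iff there exists $z$ with $(x,y,z)\in\mathcal A$. We need a transitive tournament $T_r$ in $G$, with loops permitting repeated vertices, i.e.\ a homomorphic image. However, density alone of $G$ is too weak. The density of $\mathcal A$ only says $\sum_{(x,y)} |\{z:(x,y,z)\in\mathcal A\}| > ((r-2)/(r-1))^2|\mathcal C|^3$, so the fraction of pairs $(x,y)$ that are arcs of $G$ exceeds $((r-2)/(r-1))^2$. That is below $(r-2)/(r-1)$, the Turán threshold for $T_r$ from \cref{Thm:one-digraphs}. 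So the argument must exploit more, for instance the freedom in choosing the linear order and the right colors.

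My plan is therefore to use the weaker condition differently. I would consider, for each top color $y$, the set $L_y$ of left colors $x$ with $(x,y,z)\in\mathcal A$ for some $z$, and the corresponding right-side sets. The extremal palette $\mathcal Q^2_{r-1}$ suggests that the density factors as (left-top compatibility)$\times$(top-right compatibility). I would set up a weighted or averaged argument. First, pick a suitable color $y$ at random. Then apply a Turán-type result (\cref{Thm:one-digraphs} for transitive tournaments, or rather the underlying Turán theorem for $K_r$, with a supersaturated/blow-up version) to a digraph built from $\mathcal A$. In that digraph, an arc density above $(r-2)/(r-1)$ is guaranteed by Cauchy–Schwarz from the product bound $((r-2)/(r-1))^2$. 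The natural candidate is the digraph on $\mathcal C$ in which $x\to y$ when $(x,y,\cdot)$ and $(\cdot, x,y)$-type conditions hold in a correlated way. Since a transitive tournament embedding is equivalent to a $K_r$ in the graph of pairs adjacent in at least one direction, with a consistent order, a homomorphic $K_r$ suffices.

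The main obstacle is exactly this density-boosting step: turning a cubic density $>((r-2)/(r-1))^2$ into a pairwise density $>(r-2)/(r-1)$ in some graph where a $K_r$ (hence a transitive $T_r$) yields the required $\mathcal Q^L_{T_r}$ embedding. I would first try a Cauchy–Schwarz argument on the left–top and top–right projections. If that fails, I would instead pass to Lamaison-style reduced palettes and use the fact that loops (colors with $(x,x,\cdot)$ admissible) make the embedding trivial. This would reduce to the loopless case, where the bound follows from Turán's theorem applied to the left–top graph.
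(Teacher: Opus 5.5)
\textbf{There is a genuine gap: the upper-bound plan aims at a false statement and is missing the key lemma.}

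Your lower bound is fine. The trouble is the upper bound. You try to show that every palette $\mathcal P$ with $d(\mathcal P)>\bigl(\frac{r-2}{r-1}\bigr)^2$ contains $\mathcal Q^L_{T_r}$. Equivalently, you want the left--top digraph $G_L$ (arc $(x,y)$ iff $(x,y,z)\in\mathcal A$ for some $z$) to receive a homomorphism from $T_r$. The palette $\mathcal Q_{r-1}$ refutes this. Its density $\frac{r-2}{r-1}$ exceeds the threshold, but its $G_L$ is the loopless complete digraph on $r-1$ colors. A tournament on $r$ vertices cannot map into a loopless digraph on $r-1$ vertices.

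So no density-boosting can succeed. Your fallback (reduce to a loopless $G_L$ and apply Tur\'an's theorem to it) only gives $d(\mathcal P)\le\frac{r-2}{r-1}$. The missing observation is that $F$ is also $\mathcal Q^R_{T_r}={\rm rev}(\mathcal Q^L_{T_r})$-colorable, by reversing the vertex order. Hence it suffices to embed \emph{either} $\mathcal Q^L_{T_r}$ or $\mathcal Q^R_{T_r}$. In $\mathcal Q_{r-1}$ it is the top--right digraph $G_R$ (arc $(y,z)$ iff $(x,y,z)\in\mathcal A$ for some $x$) that has loops. The case left to rule out is that both $G_L$ and $G_R$ are loopless and $T_r$-free.

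The step you call the main obstacle also cannot be done by Cauchy--Schwarz turning cubic density into pairwise density above $\frac{r-2}{r-1}$. Take $\mathcal A=\{(x,y,z): y\in Y\}$ with $|Y|=\varepsilon|\mathcal C|$. This palette has density $\varepsilon$, while $G_L$ and $G_R$ also have density only $\varepsilon$. Choosing $\bigl(\frac{r-2}{r-1}\bigr)^2<\varepsilon<\frac{r-2}{r-1}$ shows that no such pairwise density is forced, because degrees can concentrate on few top colors.

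What controls this is $T_r$-freeness, through an $\ell_2$ Tur\'an bound; this is the key lemma you are missing. Every $T_r$-free digraph $D$ on $n$ vertices satisfies $\sum_v d^{+}_D(v)^2\le\bigl(\frac{r-2}{r-1}\bigr)^2n^3$, and likewise for in-degrees. The paper proves it by induction on $r$:
\begin{itemize}
\item the out-neighbourhood $S$ of a maximum out-degree vertex is $T_{r-1}$-free, so degrees inside $S$ are handled by induction and Brown--Harary;
\item degrees outside $S$ are at most $|S|$;
\item one then maximizes the resulting cubic in $|S|/n$.
\end{itemize}
With the lemma,
\[
|\mathcal A|\le\sum_{y\in\mathcal C} d^-_{G_L}(y)\,d^+_{G_R}(y)\le\frac12\sum_{y\in\mathcal C}\Bigl(d^-_{G_L}(y)^2+d^+_{G_R}(y)^2\Bigr)\le\Bigl(\frac{r-2}{r-1}\Bigr)^2|\mathcal C|^3,
\]
which contradicts the density assumption. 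Your heuristic ``(left--top compatibility)$\times$(top--right compatibility)'' points in this direction. Without the two-sided reduction and the $\ell_2$ lemma, though, the argument does not close.

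A smaller point: a $K_r$ in the underlying graph of $G_L$ does not yield a transitive $T_r$; blow-ups of $C_3$ show this.
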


As mentioned above, $\pi_{\mathrm u}(K_4^{(3)-}) = 1/4$ (see~\cite{k43minus-1, k43minus-2}) and $\pi_{\mathrm u}(F^{\star}_5) = 1/4$ (see~\cite{chen2022beyond}) were proven using the hypergraph
regularity method. Now, by~\cref{Thm:=r-1/r-square}, it is straightforward to verify that $\pi_{\mathrm u}(K_4^{(3)-}) =\pi_{\mathrm u}(F^{\star}_5)= 1/4$.
Moreover, the next theorem guarantees the existence of such $3$-graphs for all $r\ge 3$, thus first confirming that the value $(\frac{r-2}{r-1})^2$ is attainable as a uniform Tur\'an density.

\begin{theorem}\label{Thm:=r-1/r-square-exist}
For any $r \ge 3$ and a transitive tournament $T_r$ be on $r$ vertices, there exist $3$-graphs $F$ that are $\mathcal Q^{L}_{T_r}$-colorable but not $\mathcal Q^{2}_{r-1}$-colorable.
\end{theorem}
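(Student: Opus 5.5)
The plan is to reduce non-$\mathcal Q^{2}_{r-1}$-colorability to a statement about links, and then build $F$ inductively in $r$, with $K_4^{(3)-}$ as the base case $r=3$. The paper notes that $K_4^{(3)-}$ is $\mathcal Q^L_{T_3}$-colorable and not $\mathcal Q^2_2$-colorable.

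\emph{What the two palettes require.} For a fixed order $\prec$ and a colouring $\varphi$, an edge $u\prec v\prec w$ is admissible in $\mathcal Q^2_{r-1}$ exactly when $\varphi(uv)\neq\varphi(uw)$ and $\varphi(uw)\neq\varphi(vw)$. So for every vertex $x$, the map $y\mapsto \varphi(xy)$ must properly colour, with $r-1$ colours, both its \emph{forward link} $\{vw: xvw\in E,\ x\prec v,w\}$ and its \emph{backward link} $\{uv: uvx\in E,\ u,v\prec x\}$. Middle vertices impose no constraint.

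On the other side, a $\mathcal Q^L_{T_r}$-colouring with $T_r$ transitive has the following shape. Each vertex $x$ has a forward link that is properly $r$-coloured by $\varphi(x\cdot)\in\mathcal C_1$, and the colours increase along $\prec$. Each pair is of exactly one type: ``left'' (coloured in $\mathcal C_1$) or ``right'' (coloured in $\mathcal C_2$). Consequently the backward and middle links in this order are bipartite. These observations give a checklist for certifying $\mathcal Q^L_{T_r}$-colorability of any explicit construction, by exhibiting the order and the two types of pairs.

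\emph{Gadget and construction.} The basic gadget is an apex $z$ whose link is $K_r$ on $y_1\prec\dots\prec y_r$, all placed after $z$. We set $\varphi(zy_i)=i$ and $\varphi(y_iy_j)=c_{ij}$, which is $\mathcal Q^L_{T_r}$-admissible. In a $\mathcal Q^2_{r-1}$-colouring this gadget fails whenever $z$ precedes all the $y_i$ (by pigeonhole on $r$ colours), or follows all of them. The adversary escapes by putting $z$ in the middle. To prevent this, I would build $F_r$ from $F_{r-1}$ by a blow-up and pigeonhole step. Take an apex $z$ together with $M$ disjoint copies of a suitable structure built from $F_{r-1}$-type gadgets, each copy attached to $z$ through edges $zvw$. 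Given any order and colouring, pigeonhole (and Ramsey's theorem for the relative order) yields a large family of copies lying on one side of $z$ on which $\varphi(z\cdot)$ is constant, equal to some $c$. The constraint at $z$ then forbids $c$ on the relevant pairs inside those copies. This leaves an $(r-2)$-colouring of a copy of $F_{r-1}$, contradicting the inductive hypothesis. The edges through $z$ must be chosen so that the forbidden pairs are exactly the ones carrying the induction, using $\varphi(zw)\neq\varphi(vw)$ when $z$ is first and $\varphi(zv)\ne\varphi(zw)$ as appropriate.

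\emph{Keeping $\mathcal Q^L_{T_r}$-colorability.} In the intended order, $z$ comes first. The pairs $zv$ get left colours, and the new top colour is used for the new level, exactly mimicking how $T_r$ extends $T_{r-1}$ by a new sink. A pair inside a copy that is now the right pair of an edge through $z$ must receive the colour $c_{ab}$. This is consistent only if it was a right pair, or was unused, in the copy's own colouring. So the copy must be embedded so that the attaching edges use only its right-type pairs, or pairs touching fresh vertices.

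I expect the main obstacle to be this compatibility. The forcing argument wants $z$'s link to be dense and unstructured, but $\mathcal Q^L_{T_r}$-colorability forces every non-forward link to be bipartite and every forward link to be monotonically $r$-coloured. I would first try attaching $z$ only via fresh vertices placed after each copy, one fresh vertex per right-type pair. I would then check, in the same style as the one-line verification for $K_4^{(3)-}$, that the pigeonhole step still transfers a forbidden colour onto the copy.
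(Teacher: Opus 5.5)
There is a genuine gap. The gadget and its analysis are right. An apex $z$ with link $K_r$ on $y_1,\dots,y_r$ is $\mathcal Q^L_{T_r}$-coloured by $\varphi(zy_i)=i$, $\varphi(y_iy_j)=c_{ij}$. Any $\mathcal Q^2_{r-1}$-colouring fails on it when $z$ is first, since two of $\varphi(zy_1),\dots,\varphi(zy_r)$ coincide and violate $x\neq y$. It also fails when $z$ is last, violating $y\neq z$. For $r\ge 4$ the middle escape is real, as you observe. So the whole theorem rests on a device that forces, for \emph{every} vertex order, some gadget to have its apex at an extreme position.

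Your proposal does not supply such a device. The inductive blow-up is never actually specified, and the one step you assert does not follow:
\begin{enumerate}
\item Pigeonhole and Ramsey over the $M$ copies do not produce copies lying on one side of $z$. The adversary may place $z$ inside every copy, which is the same middle escape one level up.
\item Suppose a copy does lie after $z$. Pigeonhole can make the pattern of $\varphi(z\cdot)$ the same across many copies, but not constant within one. Indeed, for a link edge $z\prec v\prec w$, validity forces $\varphi(zv)\neq\varphi(zw)$. The colour forbidden on $vw$ is $\varphi(zw)$, which varies with $w$.
\item Forbidding one colour $c$ on some pairs of a copy does not yield a $\mathcal Q^2_{r-2}$-colouring of that copy. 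The colour $c$ can still occur on its other pairs, so the induction hypothesis cannot be invoked.
\end{enumerate}
The $\mathcal Q^L_{T_r}$-compatibility, which you flag as the main obstacle, is also left unresolved.

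The missing ingredient is Lamaison's lemma (\cref{lem:linear-graph}): there is a linear $(r+1)$-graph $H$ on $[n]$ such that every permutation of $[n]$ is monotone on some edge of $H$. The paper places your gadget on every edge $x_0<x_1<\dots<x_r$ of $H$, with apex $x_0$ and triples $x_0x_ix_j$ for $1\le i<j\le r$.
\begin{itemize}
\item Linearity means each pair lies in at most one gadget. So the gadget-wise colouring $\varphi(x_0x_i)=i$, $\varphi(x_ix_j)=c_{ij}$ in the natural order is consistent, which removes the compatibility problem entirely.
\item For any order, some gadget is monotone, so its apex is first or last, and your pigeonhole argument finishes.
\end{itemize}
No induction on $r$ is needed.
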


As mentioned above, Reiher, R\"odl and Schacht~\cite{vanishing} characterized all $3$-graphs $F$ with $\pi_{\rm u}(F)=0$; namely, $\pi_{\rm u}(F)=0$ holds if and only if $F$ is $\mathcal Q^{-}_3$-colorable, where $\mathcal Q^{-}_3=([3],\{(1,2,3)\})$.  In this paper, we provide a very short proof for the following result.

\begin{theorem}\label{Thm:1/27-character}
Let 
\[\mathcal Q^{+1}_5=([5], \{(1,2,3),(4,5,1)\} ), ~\mathcal Q^{+2}_5=([5], \{(1,2,3),(4,1,5)\})\]
Then every $3$-graph $F$ that is $\mathcal Q^{+i}_5$-colorable for $i\in \{1,2\}$  but not $\mathcal Q^{-}_3$-colorable, satisfies  $\pi_{\rm u}(F)=1/27$. 
\end{theorem}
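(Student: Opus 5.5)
The equality splits into a lower bound and an upper bound. The upper bound is a statement about palettes, and it goes through \cref{Thm:ander}.

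\emph{Lower bound.} The palette $\mathcal Q^{-}_3=([3],\{(1,2,3)\})$ has density $1/27$. By hypothesis $F$ is not $\mathcal Q^-_3$-colorable, so $\pi^{\rm pal}_{\rm u}(F)\ge 1/27$, and \cref{Thm:ander} gives $\pi_{\rm u}(F)\ge 1/27$.

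\emph{Upper bound: reduction to homomorphisms.} By \cref{Thm:ander} it suffices to show that $F$ is $\mathcal P$-colorable for every palette $\mathcal P=(\mathcal C,\mathcal A)$ with $d(\mathcal P)>1/27$. Colorability is monotone under palette homomorphisms. If $\psi:[5]\to\mathcal C$ sends every admissible triple of $\mathcal Q^{+i}_5$ into $\mathcal A$, then composing a $\mathcal Q^{+i}_5$-coloring of $F$ with $\psi$, and keeping the same vertex order, gives a $\mathcal P$-coloring of $F$; $\psi$ need not be injective. So the task reduces to the following palette lemma:
\begin{itemize}
\item[(a)] if $\mathcal Q^{+1}_5$ is the relevant palette, every $\mathcal P$ with $d(\mathcal P)>1/27$ contains triples $(a,b,c),(d,e,a)\in\mathcal A$, i.e.\ some color is both a left color and a right color;
\item[(b)] if $\mathcal Q^{+2}_5$ is the relevant palette, every such $\mathcal P$ contains triples $(a,b,c),(d,a,e)\in\mathcal A$, i.e.\ some color is both a left color and a top color.
\end{itemize}

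\emph{The palette lemma.} Write $L,T,R\subseteq\mathcal C$ for the sets of colors used in the left, top and right positions. In case (a), assume $L\cap R=\emptyset$; in case (b), assume $L\cap T=\emptyset$. Trivially $|\mathcal A|\le |L|\,|T|\,|R|$. If the two remaining classes could also be taken disjoint, then $|L|+|T|+|R|\le|\mathcal C|$ and AM--GM would give $|\mathcal A|\le|\mathcal C|^3/27$, as required.

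\emph{Main obstacle.} With only one disjointness assumption, the naive bound is $|\mathcal C|^3/4$, not $|\mathcal C|^3/27$. For example, in case (a) take $\mathcal A=\{(\ell,t,r):\ell\in L,\ r\in R,\ t\in\mathcal C\}$ with $L\sqcup R=\mathcal C$. So pure counting in a single palette cannot work. I would try to use the freedom in \cref{Thm:ander} to replace a palette witnessing non-colorability by a better-structured one of at least the same density.

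Concretely, I would map $\mathcal P$ onto the three-class quotient whose colors are the types ``left-only'', ``right-only'' and ``other''. Then I would show, using that $F$ is $\mathcal Q^{+i}_5$-colorable, that $F$ is $\mathcal P$-colorable unless the top colors of $\mathcal A$ avoid the other two classes. The idea is that a top color in $L$ or $R$ lets one realize the second triple of $\mathcal Q^{+i}_5$ after re-ordering or reversing the vertex order of $F$. Reversing the order turns $(x,y,z)$ into $(z,y,x)$, and this exchanges the roles of the left and right positions. Once the top colors are shown to avoid $L$ and $R$, the classes $L,T,R$ are pairwise disjoint and AM--GM finishes.

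I expect this ``top colors avoid $L\cup R$'' step to be the real difficulty. If it fails, the fallback is a weighted (Lagrangian-type) optimization over the color classes, restricted to palettes that do not admit any homomorphic image of $\mathcal Q^{+i}_5$. In either case the target is to show that the maximum density of such palettes is $1/27$, attained only by $\mathcal Q^-_3$-type blow-ups. Combined with the lower bound, this gives $\pi_{\rm u}(F)=1/27$.
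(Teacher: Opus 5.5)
There is a genuine gap, and it comes from reading the hypothesis as ``$F$ is $\mathcal Q^{+i}_5$-colorable for \emph{some} $i$'', which is what your split into cases (a) and (b) does. Under that reading the statement is false. So the step you flag as the main obstacle cannot be carried out by any argument. That step is either forcing the top colors to avoid $L\cup R$, or showing that palettes admitting no image of $\mathcal Q^{+i}_5$ have density at most $1/27$.

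For $i=2$, the palette $\mathcal Q'^{-}_3$ has density $4/27$ and satisfies $L\cap T=T\cap R=\emptyset$. The $3$-graph of \cref{Thm:4/27-graph} is $\mathcal Q^{+2}_5$-colorable but not $\mathcal Q'^{-}_3$-colorable, so $\pi_{\rm u}\ge 4/27$. Since $\mathcal Q^-_3\subseteq\mathcal Q'^{-}_3$ via $1\mapsto 1$, $2\mapsto 3$, $3\mapsto 2$, it is also not $\mathcal Q^-_3$-colorable.

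For $i=1$, your own density-$1/4$ palette with $L\cap R=\emptyset$ is a real obstruction. Take $H$ from \cref{lem:linear-graph} with $k=4$. On every edge $v_1<v_2<v_3<v_4$ of $H$ place the edges $v_1v_2v_3$ and $v_2v_3v_4$. Give them the triples $(4,5,1)$ and $(1,2,3)$ in the natural order; these agree on $v_2v_3$, and linearity of $H$ prevents conflicts between edges of $H$. This is a $\mathcal Q^{+1}_5$-coloring. However, every vertex order is monotone on some edge of $H$, and in either direction $v_2v_3$ is then the left pair of one triple and the right pair of the other. So this $3$-graph is colorable by no palette with $L\cap R=\emptyset$ (in particular not by $\mathcal Q^-_3$), and $\pi_{\rm u}\ge 1/4$. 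The paper's remark after the theorem, citing Lamaison's $1/4$ example and its own $4/27$ example, makes the same point.

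The hypothesis must be read as: $F$ is \emph{both} $\mathcal Q^{+1}_5$-colorable and $\mathcal Q^{+2}_5$-colorable. The paper's proof shows that every palette of density above $1/27$ contains $\mathcal Q^{+1}_5$, $\mathcal Q^{+2}_5$ or ${\rm rev}(\mathcal Q^{+2}_5)$, and that only yields the theorem under this reading. With it, your framework closes immediately. If $F$ is not $\mathcal P$-colorable, then:
\begin{itemize}
\item $L\cap R=\emptyset$, since otherwise $\mathcal Q^{+1}_5\subseteq\mathcal P$;
\item $L\cap T=\emptyset$, since otherwise $\mathcal Q^{+2}_5\subseteq\mathcal P$;
\item $T\cap R=\emptyset$, since otherwise ${\rm rev}(\mathcal Q^{+2}_5)\subseteq\mathcal P$, and $F$ is ${\rm rev}(\mathcal Q^{+2}_5)$-colorable by reversing its vertex order.
\end{itemize}
Hence $L,T,R$ are pairwise disjoint, and your AM--GM computation gives $d(\mathcal P)\le 1/27$. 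Your lower bound and the homomorphism reduction are correct as written.
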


We note that the statement of \cref{Thm:1/27-character} is equivalent to~\cite[Theorem~15]{1/27}. In particular, Garbe, Kr\'al' and Lamaison~\cite{1/27} took about twenty pages to prove this result using the hypergraph regularity method and constructed some $3$-graphs that satisfy this condition.

Returning to \cref{Thm:1/27-character}, the assumption ``not $\mathcal Q^{-}_3$-colorable'' is necessary: the characterization of Reiher, R\"odl and Schacht~\cite{vanishing} implies that every $3$-graph $F$ with $\pi_{\rm u}(F)\ge 1/27$ fails to be $\mathcal Q^{-}_3$-colorable.
This raises the question of whether $\mathcal Q^{+1}_5$-colorability or $\mathcal Q^{+2}_5$-colorability is also necessary for a $3$-graph $F$ with $\pi_{\rm u}(F)=1/27$.
Recently, Lamaison~\cite{Ander} showed that there exist $3$-graphs $F$ with $\pi_{\rm u}(F)=1/4$ that are $\mathcal Q^{+1}_5$-colorable.
In this paper, we show that there also exist $3$-graphs $F$ with $\pi_{\rm u}(F)=4/27$ that are $\mathcal Q^{+2}_5$-colorable. More generally, we prove the following conclusion.

\begin{theorem}\label{Thm:4/27-character}
Let 
\[ \mathcal Q'^{-}_3=([3], \{(1, 3, 1), (1, 3, 2), (2, 3, 1),(2, 3, 2)\}).\]
Then every $3$-graph $F$ that is $\mathcal Q^{+2}_5$-colorable  but not $\mathcal Q'^{-}_3$-colorable, satisfies  $\pi_{\rm u}(F)=4/27$. 
\end{theorem}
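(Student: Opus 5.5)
The lower bound comes from \cref{Thm:ander} applied to $\mathcal Q'^{-}_3$, and the upper bound from a counting argument over all palettes of density above $4/27$.

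\textbf{Lower bound.} The palette $\mathcal Q'^{-}_3$ has $4$ admissible triples on $3$ colors, so $d(\mathcal Q'^{-}_3)=4/27$. Since $F$ is not $\mathcal Q'^{-}_3$-colorable, the definition of $\pi^{\rm pal}_{\rm u}$ and \cref{Thm:ander} give $\pi_{\rm u}(F)\ge 4/27$.

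\textbf{Reduction of the upper bound.} By \cref{Thm:ander}, it suffices to show that $F$ is $\mathcal P$-colorable for every palette $\mathcal P=(\mathcal C,\mathcal A)$ with $d(\mathcal P)>4/27$. I will use two elementary facts about palettes.

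First, palette homomorphisms transfer colorings. Suppose $\psi:[5]\to\mathcal C$ maps every admissible triple of $\mathcal Q^{+2}_5$ to an admissible triple of $\mathcal P$, with $\psi$ not required to be injective. Then composing a $\mathcal Q^{+2}_5$-coloring $\varphi$ of $F$ with $\psi$, and keeping the same vertex order, gives a $\mathcal P$-coloring.

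Second, reversal preserves colorability. Let $\mathcal P^{\rm rev}$ have admissible triples $\{(z,y,x):(x,y,z)\in\mathcal A\}$. Then $F$ is $\mathcal P$-colorable if and only if it is $\mathcal P^{\rm rev}$-colorable: reverse the linear order $\prec$. Under reversal, for an edge $u\prec v\prec w$ the pair $uv$ becomes the right pair, $vw$ the left pair, and $uw$ stays the top pair.

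\textbf{Finding the homomorphism.} The admissible triples of $\mathcal Q^{+2}_5$ are $(1,2,3)$ and $(4,1,5)$. So a homomorphism into $\mathcal P$ exists as soon as some color $a\in\mathcal C$ occurs both as the left color of some triple $(a,b,c)\in\mathcal A$ and as the top color of some triple $(d,a,e)\in\mathcal A$. In that case set $\psi(1)=a$, $\psi(2)=b$, $\psi(3)=c$, $\psi(4)=d$, $\psi(5)=e$; coincidences among these values are harmless.

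Applying the same reasoning to $\mathcal P^{\rm rev}$ shows that $F$ is also $\mathcal P$-colorable whenever some color occurs both as a right color and as a top color in $\mathcal A$.

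\textbf{The counting step.} It remains to treat a palette $\mathcal P$ in which no color is simultaneously a top color and a left or right color. Let $T\subseteq\mathcal C$ be the set of colors occurring as top colors, with $t=|T|$ and $n=|\mathcal C|$. Every admissible triple then lies in $(\mathcal C\setminus T)\times T\times(\mathcal C\setminus T)$, so
\[
|\mathcal A|\le t(n-t)^2\le \max_{0\le x\le n}x(n-x)^2=\frac{4n^3}{27},
\]
where the maximum is attained at $x=n/3$. Hence $d(\mathcal P)\le 4/27$.

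Consequently every palette of density greater than $4/27$ colors $F$, so $\pi_{\rm u}(F)=\pi^{\rm pal}_{\rm u}(F)\le 4/27$, which together with the lower bound gives equality. The step that needs the most care is the reversal symmetry. Without it, a palette such as $([2],\{(1,2,1),(1,2,2)\})$, of density $1/4$, has no color that is both left and top. It is handled only because its color $2$ is both top and right, which is exactly the situation covered by passing to $\mathcal P^{\rm rev}$.
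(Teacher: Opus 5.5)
Your proof is correct and follows the same route as the paper: the lower bound comes from $d(\mathcal Q'^{-}_3)=4/27$, and the upper bound shows that if no top color is also a left or right color, then $|\mathcal A|\le t(n-t)^2\le 4n^3/27$. Your explicit use of reversal is actually needed: the paper's write-up assumes only $\mathcal Q^{+2}_5\nsubseteq\mathcal P$ yet concludes $T\cap R=\emptyset$, which also requires ${\rm rev}(\mathcal Q^{+2}_5)\nsubseteq\mathcal P$, as your example $([2],\{(1,2,1),(1,2,2)\})$ shows.
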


We note that~\cref{Thm:4/27-character} remains valid if $\mathcal Q'^{-}_3$ is replaced by any palette of density $4/27$.
Moreover, we construct $3$-graphs that are $\mathcal Q^{+2}_5$-colorable but not $\mathcal Q'^{-}_3$-colorable.

\begin{theorem}\label{Thm:4/27-graph}
There exists $3$-graphs $F$ that are $\mathcal Q^{+2}_5$-colorable but not $\mathcal Q'^{-}_3$-colorable.
\end{theorem}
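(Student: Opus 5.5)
The plan is to reformulate non-$\mathcal Q'^{-}_3$-colorability as a purely order-theoretic obstruction, and then to build $F$ from many copies of a small $\mathcal Q^{+2}_5$-colored gadget, using Ramsey's theorem to make every ordering behave like the intended one.

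First, the reformulation. In $\mathcal Q'^{-}_3$ the top color is always $3$ and the left and right colors lie in $\{1,2\}$. Hence $F$ is $\mathcal Q'^{-}_3$-colorable if and only if there is a linear order $\prec$ on $V(F)$ with the following property: no pair $xy\in\partial F$ is the outer pair $\{u,w\}$ of one edge $u\prec v\prec w$ and an inner pair $\{u,v\}$ or $\{v,w\}$ of another edge. Equivalently, for every pair $xy\in\partial F$, the vertices $z$ with $xyz\in E(F)$ lie either all strictly between $x$ and $y$ or all outside the interval between $x$ and $y$. So non-colorability means that every ordering of $V(F)$ contains a \emph{split pair}: a pair with one link vertex inside its interval and another outside.

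Second, the source of the gadget. Consider the intended ordering of a $\mathcal Q^{+2}_5$-colored $3$-graph. The colors $2,3,4,5$ each occur in only one position of one admissible triple. Color $1$ occurs as the left color of $(1,2,3)$ and as the top color of $(4,1,5)$. So a pair $x\prec y$ of color $1$ may lie in an edge $xyz$ with $y\prec z$ and also in an edge $xwy$ with $x\prec w\prec y$. Such a pair is automatically split in the intended order. The basic gadget is therefore a pair $xy$ with two edges $xyz$ (of type $(1,2,3)$) and $xwy$ (of type $(4,1,5)$). The task is to glue many such gadgets so that every ordering, not just the intended one, contains a split pair.

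Third, the construction. I would take a large $N$ and vertex classes indexed by $[N]$, say $a_i,b_i,c_i$. Edges are placed only on index triples $i<j<k$, following a fixed pattern in which each edge is colored $(1,2,3)$ or $(4,1,5)$ according to the intended order $a_1\prec\cdots\prec a_N\prec b_1\prec\cdots$ (or an interleaved order if that helps). The pattern is chosen so that each pair type has one fixed color, and only the pair types meant to be split receive color $1$. Checking $\mathcal Q^{+2}_5$-colorability is then a finite check over the pair types.

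For non-colorability, fix an arbitrary order $\prec'$. Colour each index $k$-tuple by the relative $\prec'$-order pattern of its $O(1)$ associated vertices. By the hypergraph Ramsey theorem, for $N$ large there is a set of $m$ indices on which this pattern is constant. Within this homogeneous subfamily the relative order of the vertices of every gadget is one of finitely many uniform patterns. The remaining work is a case analysis over these patterns, showing that each one produces a split pair.

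For the cases in which an index sequence is monotone in $\prec'$, I would reuse the tight-cycle style argument: a long chain of edges on consecutive indices forces some pair to have link vertices on both sides. In the remaining cases the gadget $xyz$, $xwy$ itself should already be split.

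The main obstacle is choosing the edge pattern so that this case analysis actually closes. If the first attempt leaves a consistent pattern, I would enlarge the gadget, for instance by adding a second color-$1$ pair per index triple, and repeat the Ramsey reduction. Ideally I would first find a small explicit $F$ with a direct argument, such as a few gadgets overlapping on their color-$1$ pairs, and fall back to the Ramsey scheme only if no such $F$ turns up.
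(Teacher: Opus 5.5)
Your reformulation is correct: the top color of $\mathcal Q'^{-}_3$ is never a left or right color, so $F$ is $\mathcal Q'^{-}_3$-colorable if and only if some ordering has no split pair. Your gadget is also exactly the paper's: edges $v_1v_2v_3$ and $v_1v_3v_4$ colored $(4,1,5)$ and $(1,2,3)$.

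\textbf{The gap.} The proposal stops where the proof has to begin. No $3$-graph $F$ is specified, the ``fixed pattern'' is left open, and the case analysis is not done; you say yourself that closing it is the main obstacle. The scheme also does not close on its own terms:
\begin{itemize}
\item A single gadget is not split under every ordering. With edges $xwy$ and $xyz$, the order $x\prec y\prec w\prec z$ has no split pair. So your claim that ``in the remaining cases the gadget itself should already be split'' fails, and you must force the gadget's four vertices into monotone order.
\item Ramsey on indices only homogenizes the relative-order pattern. It does not stop the adversary from placing whole classes as blocks, so your pattern would have to defeat every such placement. The natural instantiation does not. Take classes $X,W,Y,Z$ for the four roles, with edges $x_iw_jy_k$ and $x_iy_kz_l$ for increasing indices. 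This $F$ is $\mathcal Q^{+2}_5$-colorable in the block order $X,W,Y,Z$. But in the block order $X,Y,W,Z$ the top pairs are exactly the $xw$ and $xz$ pairs. Coloring those $3$ and all other pairs $1$ makes both edges $(1,3,1)$, so $F$ is $\mathcal Q'^{-}_3$-colorable.
\item Putting several roles into one class with a homogeneous pattern tends to make one pair type occur in positions $\mathcal Q^{+2}_5$ forbids. Its right colors $3,5$ are never left or top colors.
\end{itemize}

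\textbf{The missing idea.} You need a sparse host with an ordered Ramsey property, namely Lamaison's lemma. It gives a linear $4$-graph $H$ on $[n]$ such that every ordering of $[n]$ is monotone on some edge of $H$. The paper places your gadget $v_1v_2v_3$, $v_1v_3v_4$ on every edge $v_1<v_2<v_3<v_4$ of $H$.
\begin{itemize}
\item Linearity means no pair lies in two gadgets, so coloring each gadget $(4,1,5)$, $(1,2,3)$ in the natural order is consistent.
\item For any ordering, some gadget is monotone. Then $v_1v_3$ is split, with $v_2$ inside its interval and $v_4$ outside, whether the order is increasing or decreasing.
\end{itemize}
This resolves exactly the tension your scheme runs into. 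A consistent $\mathcal Q^{+2}_5$-coloring needs gadgets that do not share pairs, while non-colorability needs every ordering to make some gadget monotone.
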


We emphasize that $\mathcal Q^{+2}_5$-colorability is not necessary for having uniform Tur\'an density $4/27$. 
As mentioned above, Buci{\'c}, Cooper, Kr\'al', Mohr and Munh\'a Correia~\cite{cycle-uniform} determined the uniform Tur\'an density of tight cycles $C^{(3)}_\ell$: it is $0$ when $3\mid \ell$ and equals $4/27$ when $3\nmid \ell$ (for $\ell\ge 5$).
However, one can check that $C_5^{(3)}$ is not $\mathcal Q^{+2}_5$-colorable.

The rest of the paper is organized as follows. 
In the next section, we solve the Tur\'an problem for the sum of three tournaments. As a new and independently interesting result in extremal digraph theory, which directly implies~\cref{Thm:three-digraphs}.
In~\cref{sec-upper-bounds}, we will give the proof of~\cref{Thm:=r-1/r},~\cref{Thm:=r-1/r-square},~\cref{Thm:1/27-character} and~\cref{Thm:4/27-character} using~\cref{Thm:ander}.
In~\cref{sec-examples}, we prove \cref{Thm:=r-1/r-exist},~\cref{Thm:=r-1/r-square-exist} and~\cref{Thm:4/27-graph} by constructing explicit examples. At last, we conclude our paper in~\cref{sec-conclude}.

\section{Tur\'an number of the sum of tournaments}
% In this section we prove~\cref{Thm:three-digraphs} by determining the Tur\'an number of the sum of three tournaments and the corresponding extremal digraphs. 
% We begin by collecting several notation and auxiliary results that will be used throughout the argument.

As is well known, Tur\'an~\cite{Tu-graph} determined the unique $K_r$-free extremal graph on $n$ vertices, which we denote by $T_{n,r}$ and call the \emph{Tur\'an graph}. The Tur\'an graph $T_{n,r}$ is a complete $(r-1)$-partite graph on $n$ vertices whose part sizes differ by at most one. Thus, if we write $n \equiv \alpha \pmod{r-1}$ with $0\le \alpha \le r-2$, then
\begin{equation}\label{eq:ex(n,k)}
    {\rm ex}(n,K_r)
    = |E(T_{n,r})|
    = \frac{r-2}{2(r-1)}\,n^2 - \frac{(r-1-\alpha)\alpha}{2(r-1)}.
\end{equation}
For convenience, set $f(n,r) = 2|E(T_{n,r})|$ for all $n,r\in \mathbb N$. 
Let $\overleftrightarrow{T_{n,r}}$ be the digraph that is obtained by replacing each edge of $T_{n,r}$ by replacing each undirected edge with bidirectional arcs.

\cref{Thm:three-digraphs} is an immediate consequence of the following statement.

\begin{theorem}\label{Thm:copy-three-digraphs}
Let $r\ge 11$ and let $D_{r_1}, D_{r_2}, D_{r_3}$ be tournaments with $|V(D_{r_i})|=r_i$ for $i\in[3]$, where $0\le r_i\le r$ and $r_1+r_2+r_3=r$. Then for every $n\in\mathbb N$,
\[
{\rm ex}(n,D_{r_1}\oplus D_{r_2}\oplus D_{r_3}) = f(n,r).
\]
Moreover, $\overleftrightarrow{T_{n,r}}$ is the unique $D_{r_1}\oplus D_{r_2}\oplus D_{r_3}$-free extremal digraph on $n$ vertices.   
\end{theorem}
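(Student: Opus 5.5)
We prove this by induction on $n$, in the style of the Brown–Harary argument for Theorem~\ref{Thm:two-digraphs}. Write $D=D_{r_1}\oplus D_{r_2}\oplus D_{r_3}$.

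\emph{Lower bound.} $\overleftrightarrow{T_{n,r}}$ is $D$-free, because its underlying graph is $T_{n,r}$, which is $K_r$-free, while the underlying graph of $D$ is $K_r$. It has $f(n,r)$ arcs. So the content of the theorem is the upper bound $|A(G)|\le f(n,r)$ for every $D$-free digraph $G$ on $n$ vertices, together with uniqueness.

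\emph{Base case.} For $n<r$ the statement is immediate: $f(n,r)=n(n-1)$ and $\overleftrightarrow{T_{n,r}}=\overleftrightarrow{K_n}$.

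\emph{Inductive step.} Let $G$ be $D$-free on $n\ge r$ vertices with $|A(G)|\ge f(n,r)$. Call a pair $\{u,v\}$ \emph{double} if both $uv,vu\in A(G)$, and let $G_2$ be the graph of double pairs.
\begin{enumerate}
\item Counting gives $|A(G)|=e(\text{underlying})+e(G_2)$. Since $f(n,r)=2\,{\rm ex}(n,K_r)$ and the underlying graph has at most $\binom n2$ edges, $G_2$ must be dense: $e(G_2)\ge 2{\rm ex}(n,K_r)-\binom n2$.
\item By Erd\H{o}s–Stone applied to $G_2$, or more concretely by Tur\'an's theorem with supersaturation, $G_2$ contains $K_{r-1}$ whenever its density exceeds $\frac{r-3}{r-2}$. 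This does \emph{not} follow from the count in (i) alone, so we instead argue via a removal step.
\item \emph{Removal step.} If some vertex $v$ has total degree $d^+(v)+d^-(v)\le 2\cdot\frac{r-2}{r-1}n$ roughly, precisely $\le f(n,r)-f(n-1,r)$, delete it and apply induction. Equality forces $G-v=\overleftrightarrow{T_{n-1,r}}$, and then a short check shows that $v$ must attach as in $\overleftrightarrow{T_{n,r}}$, since otherwise a copy of $D$ appears, using that every tournament embeds in a doubled clique.
\item Otherwise the minimum total degree exceeds $2\delta(T_{n,r})$. Then every vertex has many double neighbours, and we greedily build a structure that contains $D$. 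Since $D\subseteq \overleftrightarrow{K_{r_1}}\oplus\cdots$, it suffices to find three vertex-disjoint sets $S_1,S_2,S_3$ with $|S_i|=r_i$, all pairs between different $S_i$ double, and each $S_i$ spanning a copy of $D_{r_i}$.
\item \emph{Reduction to transitive tournaments.} By Ramsey-type reasoning (every tournament on $2^{k}$ vertices contains a transitive $T_k$), it is convenient to replace $D_{r_i}$ by $\overleftrightarrow{K_{r_i}}$ when the $r_i$ are small, and to handle large $r_i$ via the tournament bound ${\rm ex}(n,T_r)=2{\rm ex}(n,K_r)$ from Theorem~\ref{Thm:one-digraphs} applied inside common double neighbourhoods.
\end{enumerate}

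\emph{Main obstacle.} The hardest step is the embedding in (iv). Double pairs are counted twice, so high degree does not directly yield a double $K_r$. Instead, one must trade single arcs within each $S_i$ against double arcs across the $S_i$. The natural approach is to pick vertices one by one and track the common neighbourhood. Adding a vertex to the current block only needs one arc direction, whereas moving to a new block needs double arcs to all earlier blocks. The degree condition gives, for $j$ already chosen vertices, a common ``neighbourhood weight'' of at least $n-\frac{j}{r-1}n$ in an averaged sense. One then checks that the loss from requiring double adjacency across three blocks is affordable exactly when $r\ge 11$; the example $D_{10}$ in the remark shows this is sharp. This requires a careful weighted counting lemma, similar to Brown–Harary's, where $r\ge6$ sufficed for two blocks. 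I expect the inequality verification, and the cases where some $r_i$ is tiny (e.g.\ $r_i\le 2$, which are absorbed into doubled cliques), to be the bulk of the work. Uniqueness then follows from the equality cases in (iii).
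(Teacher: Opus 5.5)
Your frame agrees with the paper's opening move. $\overleftrightarrow{T_{n,r}}$ gives the lower bound, and passing to a minimal counterexample (your step (iii)) gives every vertex total degree above $f(n,r)-f(n-1,r)$. Your equality analysis in (iii) is salvageable, though ``every tournament embeds in a doubled clique'' does not settle it alone. The vertex $v$ may be joined by single arcs to all of two smallest parts, and you then need some $r_i\ge 4$, so that this summand has a vertex with the required in/out pattern.

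\textbf{The gap is step (iv).} It is the whole content of the theorem, and it is only asserted. The greedy embedding does not work as described. The degree condition gives, per vertex, $d_{\rm single}(v)+2d_{\rm none}(v)$ below about $\frac{2(n-1)}{r-1}$. So the common \emph{double} neighbourhood of $j$ chosen vertices is only guaranteed to have about $n-\frac{2jn}{r-1}$ vertices. That guarantees roughly $r/2$ pairwise doubly adjacent vertices and nothing more. Beyond that you must place vertices in the same block through single arcs. Nothing controls the orientations of those arcs, so an arbitrary non-transitive $D_{r_i}$ cannot be embedded greedily. The sentence ``one then checks that the loss is affordable exactly when $r\ge 11$'' has no argument behind it.

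Step (v) points the wrong way. Replacing $D_{r_i}$ by $\overleftrightarrow{K_{r_i}}$ enlarges the target, and ${\rm ex}(n,\overleftrightarrow{K_r})$ is far above $f(n,r)$. The fact that a tournament on $2^k$ vertices contains a transitive $T_k$ concerns subtournaments of $D_{r_i}$, not finding $D_{r_i}$ inside $G$.

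\textbf{The missing idea is structural, not a weighted counting inequality.}
\begin{itemize}
\item The paper takes an $(r-1)$-vertex induced subdigraph $G_1$ with the most arcs. Using the global arc count, \cref{pro:relation} and minimality, it shows some outside vertex sends exactly $2r-3$ arcs into $G_1$.
\item It concludes that the resulting $r$-vertex $G_2$ is a sum of copies of $T_2$ and at most one $T_1$ (\cref{clm:G2-max}, via \cref{pro:sums}).
\item Such a sum already contains $D$ unless there is a parity obstruction: two odd $r_i$ when $r$ is even, or three odd $r_i$ when $r$ is odd.
\item Breaking the obstruction needs a transitive triple in the right place. The tools are a $T_3$-summand as in \cref{G3}, the fact that every tournament on at least $4$ vertices contains a $T_3$, and \cref{lemma:T6}, which splits every $6$-vertex tournament into two $T_3$'s.
\item This is exactly where $r\ge 11$ enters. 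It matches the $D_{10}$ example, whose obstruction is that $C_3\oplus C_3\oplus T'_4$ cannot be covered by five transitive pieces.
\item The contradiction then comes from counting outside vertices sending $2r-2$ arcs to $G_2$, or $2r$ arcs to the $(r+1)$-vertex extensions. The arc count forces linearly many such vertices.
\item $D$-freeness and maximality force each of them into a rigid attachment pattern, typically a directed triangle with one $T_2$-summand. Each resulting class is arc-free, and the minimum-degree bound caps each class at about $\frac{n-r}{r-1}+\frac12$.
\end{itemize}
Without an argument of this kind, your proposal proves neither the upper bound nor uniqueness.
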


The remainder of this section is devoted to the proof of~\cref{Thm:copy-three-digraphs}.

\subsection{Preparation}
As a direct consequence of~\eqref{eq:ex(n,k)}, we obtain the following useful proposition.

\begin{proposition}
\label{pro:relation}
For all $n_1,n_2,r\in\mathbb N$, let $\alpha_1$ and $\alpha_2$ denote the residues of $n_1$ and $n_2$ modulo $r-1$, respectively. Then
\[f(n_1 + n_2, r) - f(n_1, r) - f(n_2, r) - 2\cdot \frac{r-2}{r-1}\cdot n_1n_2 = \begin{cases} 
2\cdot \frac{\alpha_1\alpha_2}{r-1}, & \alpha_1 + \alpha_2 < r - 1, \\[6pt]
2\cdot \frac{(r - 1 - \alpha_1)(r - 1 - \alpha_2)}{r-1}, & \alpha_1 + \alpha_2 \geq r - 1.
\end{cases} 
\]
\end{proposition}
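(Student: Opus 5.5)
The plan is a direct computation from~\eqref{eq:ex(n,k)}. Put $m=r-1$. Doubling~\eqref{eq:ex(n,k)} gives, for every $n$ with residue $\alpha$ modulo $m$,
\[
f(n,r)=\frac{r-2}{r-1}\,n^2-\frac{(m-\alpha)\alpha}{m}.
\]
Let $\alpha$ be the residue of $n_1+n_2$ modulo $m$. Since $(n_1+n_2)^2-n_1^2-n_2^2=2n_1n_2$, the quadratic terms cancel exactly against $2\cdot\frac{r-2}{r-1}n_1n_2$. The left-hand side of the proposition therefore equals
\[
\frac{1}{m}\Bigl[(m-\alpha_1)\alpha_1+(m-\alpha_2)\alpha_2-(m-\alpha)\alpha\Bigr].
\]
It remains to evaluate this bracket in the two cases determined by whether a carry occurs.

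\emph{Case $\alpha_1+\alpha_2<m$.} Here $\alpha=\alpha_1+\alpha_2$. Expanding the bracket, the linear terms $m\alpha_1+m\alpha_2-m(\alpha_1+\alpha_2)$ cancel, and we are left with
\[
-\alpha_1^2-\alpha_2^2+(\alpha_1+\alpha_2)^2=2\alpha_1\alpha_2.
\]
This gives the value $2\alpha_1\alpha_2/(r-1)$.

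\emph{Case $\alpha_1+\alpha_2\ge m$.} Here $\alpha=\alpha_1+\alpha_2-m$, which lies in $[0,m-2]$, so it is indeed the residue. Substitute $\beta_i=m-\alpha_i$. Then $\alpha=m-\beta_1-\beta_2$ and $m-\alpha=\beta_1+\beta_2$, and also $(m-\alpha_i)\alpha_i=\beta_i(m-\beta_i)$. The bracket becomes
\[
\beta_1(m-\beta_1)+\beta_2(m-\beta_2)-(\beta_1+\beta_2)(m-\beta_1-\beta_2).
\]
This has the same form as the first case with $\alpha_i$ replaced by $\beta_i$, so it equals $2\beta_1\beta_2=2(r-1-\alpha_1)(r-1-\alpha_2)$.

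There is no real obstacle here. The only points needing care are the factor of $2$ coming from $f=2\,{\rm ex}$, and checking in the second case that $\alpha_1+\alpha_2-m$ really is the residue of $n_1+n_2$ modulo $m$.
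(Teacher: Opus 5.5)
Your computation is correct and is the direct derivation from~\eqref{eq:ex(n,k)} that the paper intends. The paper only asserts the proposition as an immediate consequence of that formula. You supply the details it leaves out: the check that $\alpha_1+\alpha_2-(r-1)$ is the right residue, and the substitution $\beta_i=r-1-\alpha_i$ that reduces the carry case to the first case.
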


Note that there are only four non-isomorphic tournaments on at most three vertices.
For convenience, in this section we fix the notation that $T_1$ denotes the trivial tournament on one vertex, $T_2$ the unique tournament on two vertices, $T_3$ the transitive tournament on three vertices, and $C_3$ the directed cycle on three vertices.
For an $s$-vertex complete digraph $\overleftrightarrow{K_s}$, we may view it as the sum of $s$ copies of $T_1$. 
Hence, we can write
\[
\overleftrightarrow{K_s} = T^1_1 \oplus \cdots \oplus T^s_1,
\]
where each $T^i_1$ is a copy of $T_1$, which we refer to as the $i$th $T_1$-summand of $\overleftrightarrow{K_s}$.
More generally, we have the following proposition.

\begin{proposition}\label{pro:sums}
For every $s\in\mathbb N$, the sum $T^1_2\oplus\cdots\oplus T^s_2$ of $s$ copies of $T_2$ contains every tournament on at most $2s$ vertices, and the sum
$T^1_1\oplus T^1_2\oplus\cdots\oplus T^s_2$ of one copy of $T_1$ and $s$ copies of $T_2$ contains every tournament on at most $2s+1$ vertices.
\end{proposition}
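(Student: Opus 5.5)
We prove the two assertions of \cref{pro:sums} together by induction on the number $m$ of vertices of the tournament, with the following strengthened claim. Let $S_m$ denote $T^1_2\oplus\cdots\oplus T^s_2$ when $m=2s$, and $T^1_1\oplus T^1_2\oplus\cdots\oplus T^s_2$ when $m=2s+1$. The claim is that every tournament $T$ on exactly $m$ vertices embeds into $S_m$, as a spanning subdigraph. Since $S_m$ arises from $S_{m-1}$ by adding a summand, and a tournament on fewer than $m$ vertices can be extended to one on $m$ vertices (or simply embedded via an induced subdigraph), the case ``at most $2s$'' (resp.\ ``at most $2s+1$'') follows. Concretely: a tournament on $m'\le m$ vertices embeds in $S_{m'}$, and $S_{m'}$ is a subdigraph of $S_m$. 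This holds because $S_{2s}\subseteq S_{2s+1}$ by adding the $T_1$-summand, and $S_{2s+1}\subseteq S_{2s+2}$, since $T_1$ sits inside a $T_2$ summand.

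The embedding itself is essentially a pairing argument. Vertices lying in different summands of $S_m$ are joined by arcs in both directions, so any orientation between them is available. The only constraint is inside each summand: a $T_2$-summand on $\{x,y\}$ contains exactly one arc, say $xy$. So given a tournament $T$ on $m$ vertices, we partition $V(T)$ into $\lfloor m/2\rfloor$ pairs, plus one singleton if $m$ is odd. We map each pair $\{u,v\}$ onto a $T_2$-summand so that the unique arc of $T$ between $u$ and $v$, say $uv$, goes to the arc $xy$ of that summand; that is, $u\mapsto x$ and $v\mapsto y$. The singleton, if present, goes to the $T_1$-summand.

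It remains to check that every arc of $T$ is present in the image. An arc inside a pair maps to the summand's arc by construction. An arc between vertices of different pairs, or between a pair and the singleton, maps to a pair of vertices in distinct summands, and $\oplus$ supplies both directions there. Hence the map is an injective homomorphism, so $T\subseteq S_m$.

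There is essentially no obstacle. The only point needing care is the reduction from ``at most $2s$'' vertices to exactly the spanning case; this is handled by the monotonicity $S_{m'}\subseteq S_m$ noted above, or directly by allowing some summands to remain unused in the pairing. Note that any pairing whatsoever works, so no induction is actually needed beyond this bookkeeping.
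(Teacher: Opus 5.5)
Your proof is correct and follows the paper's argument: pair up the vertices of $T$, map each pair onto a $T_2$-summand with its internal arc oriented to agree with $T$, and send the leftover vertex (odd case) to the $T_1$-summand. The arcs between different summands go in both directions, so every remaining arc of $T$ is present. You also handle the ``at most'' case explicitly, via $S_{m'}\subseteq S_m$ or by leaving summands unused, which the paper leaves implicit; the induction framing you announce at the start is unnecessary, as you note yourself.
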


\begin{proof}
Let $G:=T^1_2\oplus\cdots\oplus T^s_2$, and let $T$ be any tournament on $2s$ vertices. Partition $V(T)$ arbitrarily into $s$ unordered pairs $\{x_1,y_1\},\{x_2,y_2\},\dots,\{x_s,y_s\}$.
Write the $i$th $T_2$-summand of $G$ as having vertex set $\{a_i,b_i\}$, where exactly one of $(a_i, b_i)$ and $(b_i, a_i)$ is an arc.

Define an injective map $\phi:V(T)\to V(G)$ by mapping $\{x_i,y_i\}$ to $\{a_i,b_i\}$ in such a way that the unique arc inside the $i$th summand agrees with the orientation between $x_i$ and $y_i$ in $T$; this is always possible by swapping $a_i$ and $b_i$ if necessary.
Now consider any two vertices $u\in\{x_i,y_i\}$ and $v\in\{x_j,y_j\}$ with $i\neq j$. By definition of the sum operation, $G$ contains both arcs
$(\phi(u), \phi(v))$ and $(\phi(v),\phi(u))$. Hence, whichever direction the tournament $T$ prescribes between $u$ and $v$, the corresponding arc is present in $G$. Therefore, $G$ contains every tournament on $2s$ vertices.

For the odd case, let $G':=T_1\oplus G$, and let $T'$ be any tournament on $2s+1$ vertices. Map an arbitrary vertex of $T'$ to the unique vertex of the $T_1$-summand, and apply the previous argument to embed the remaining $2s$ vertices into $G$. Thus $G'$ contains every tournament on $2s+1$ vertices.
\end{proof}

We also need a simple structural fact about tournaments on six vertices.

\begin{lemma}\label{lemma:T6}
Every tournament on six vertices can be partitioned into two vertex-disjoint  $T_3$.
\end{lemma}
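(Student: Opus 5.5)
We need to show that every tournament $T$ on six vertices has a vertex partition into two sets of size three, each inducing a transitive tournament $T_3$. A $3$-set fails to be transitive exactly when it induces a directed cycle $C_3$, so it suffices to find a $3$-set $S$ such that neither $S$ nor $V(T)\setminus S$ spans a $C_3$.

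\textbf{Step 1: a vertex with large out-degree.} The total out-degree of $T$ is $\binom{6}{2}=15$, so some vertex has out-degree at least $3$. Symmetrically, some vertex has in-degree at least $3$; reversing all arcs preserves the transitivity of triples, so either choice works. Fix a vertex $v$ with out-degree at least $3$.

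\textbf{Step 2: build a transitive triple through $v$.} Let $N^+(v)$ be the out-neighbourhood of $v$, so $|N^+(v)|\ge 3$. For any arc $xy$ with $x,y\in N^+(v)$, the triple $\{v,x,y\}$ is transitive, since $v$ is a source in it.

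\textbf{Step 3: choose the arc so that the complement is also transitive.} Let $W=V(T)\setminus\{v\}$, so $|W|=5$. We want an arc $xy$ inside $N^+(v)$ such that $W\setminus\{x,y\}$ is transitive. We split by the out-degree $d=|N^+(v)|\in\{3,4,5\}$.

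\emph{Case $d=3$.} Here $N^+(v)=\{a,b,c\}$ and $N^-(v)=\{p,q\}$. If we pick $x,y\in\{a,b,c\}$, the complement is $\{z,p,q\}$, where $z$ is the remaining vertex of $\{a,b,c\}$. We need some $z\in\{a,b,c\}$ such that $\{z,p,q\}$ is not a cyclic triangle. Suppose, for contradiction, that all three triples $\{a,p,q\}$, $\{b,p,q\}$, $\{c,p,q\}$ are cyclic. The arc between $p$ and $q$ is fixed, say $pq$. Then each of $a,b,c$ must satisfy $q\to z\to p$. In that case $p$ has out-degree at most $2$ overall: its out-neighbours lie in $\{v,q\}$ minus those pointing into $p$. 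We then redo the choice starting from a different vertex of maximum out-degree, and I expect this to give the contradiction. A cleaner route to the same end is to note that the triple $\{p,q,?\}$ can instead be completed using $v$.

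\textbf{Alternative, cleaner route: counting.} Rather than the case analysis above, I would use counting. The number of cyclic triangles in a $6$-vertex tournament is
\[
\binom{6}{3}-\sum_u \binom{d^+(u)}{2}\le 20-6\binom{2.5}{2},
\]
so at most $8$ of the $20$ triples are cyclic. The $20$ triples fall into $10$ complementary pairs. It is therefore enough to show that not every complementary pair contains a cyclic triple. With at most $8$ cyclic triples covering at most $8$ pairs, at least $2$ pairs contain no cyclic triple, and any such pair gives the required partition. The bound itself is obtained by convexity: with integer out-degrees summing to $15$, $\sum_u\binom{d^+(u)}{2}\ge 3\binom{3}{2}+3\binom{2}{2}=12$, so there are at most $8$ cyclic triangles. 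This pigeonhole argument finishes the proof, and it is the route I would write up. The only delicate point is the convexity bound over integer degree sequences summing to $15$, which is routine.
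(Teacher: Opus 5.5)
Your counting route is correct and complete, and it differs from the paper's proof in the step that matters. Both arguments start from the same count. The paper cites the Moon--Moser bound, whose $n=6$ case is your bound of at most $8$ cyclic triples; you derive it directly from the identity that the number of transitive triples equals $\sum_u\binom{d^+(u)}{2}$, plus convexity over integer out-degrees summing to $15$.

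The paper then fixes an arbitrary transitive triple. It argues that each arc of this triple lies in at most three other transitive triples, so some transitive triple is vertex-disjoint from it. That step does not work as written, for two reasons. First, the arc count ignores the nine triples that meet the fixed triple in exactly one vertex. Second, in a $6$-vertex tournament the only triple disjoint from a given one is its complement, and the complement can be cyclic. For example, take a transitive triangle on $\{1,2,3\}$, a cyclic one on $\{4,5,6\}$, and all arcs from $\{1,2,3\}$ to $\{4,5,6\}$.

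Your pigeonhole over the $10$ complementary pairs $\{S,V(T)\setminus S\}$ avoids this problem, because it chooses the triple together with its complement. Each cyclic triple spoils only the one pair containing it, so at least two pairs consist of two transitive triples.

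When you write it up, drop Steps 1--3. The case $d=3$ ends with an unverified expectation, and the cases $d=4,5$ are never treated, so that material adds nothing to the counting proof.
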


Before proving~\cref{lemma:T6}, we recall a classical upper bound on the number of copies of $C_3$ in a tournament.

\begin{theorem}[\cite{number-of-3-cycles}]\label{thm:moon-moser-3cycles}
Let $D$ be a tournament on $n$ vertices, and let $D(C_3)$ denote the number of copies of $C_3$ in $D$.  Then
\[
D(C_3) \le
\begin{cases}
\dfrac{n\,(n^2-4)}{24}, & n \text{ is even},\\[6pt]
\dfrac{n\,(n^2-1)}{24}, & n \text{ is odd}.
\end{cases}
\]
% Moreover, equality holds if and only if $T_n$ is a regular (or near-regular) tournament, i.e., each vertex has out-degree and in-degree differing by at most $1$.
\end{theorem}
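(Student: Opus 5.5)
We argue by double counting: every $3$-vertex subset of a tournament spans either a copy of $C_3$ or a transitive triangle $T_3$, and transitive triangles are easy to count through out-degrees. Concretely, let $D$ be a tournament on $n$ vertices with out-degrees $d^+(v)$, $v\in V(D)$. A transitive triple on $\{u,v,w\}$ has exactly one vertex (its source) that sends arcs to the other two. Conversely, any vertex $v$ together with any two of its out-neighbours spans a transitive triple with source $v$, since a $C_3$ has no vertex of out-degree $2$ inside the triple. Hence the number of transitive triples equals $\sum_{v}\binom{d^+(v)}{2}$, and therefore
\[
D(C_3)=\binom{n}{3}-\sum_{v\in V(D)}\binom{d^+(v)}{2}.
\]
So it suffices to minimise $\sum_v\binom{d^+(v)}{2}$ subject to the constraint $\sum_v d^+(v)=\binom{n}{2}$, where each $d^+(v)$ is a nonnegative integer.

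The minimisation is a discrete convexity step. Since $x\mapsto\binom{x}{2}$ is convex on the integers, suppose two degrees satisfy $d^+(u)\ge d^+(w)+2$. Replacing them by $d^+(u)-1$ and $d^+(w)+1$ keeps the total fixed and does not increase the sum, because $\binom{a}{2}-\binom{a-1}{2}=a-1>b=\binom{b+1}{2}-\binom{b}{2}$ whenever $a\ge b+2$. We only need this as a lower bound on the sum for arbitrary integer vectors with the given total, so it does not matter whether the modified vector is realisable as a tournament. Repeating the exchange, the minimum is attained when all values differ by at most one. The average out-degree is $(n-1)/2$. Thus for $n$ odd every value equals $(n-1)/2$, and for $n$ even half of the values equal $n/2$ and half equal $n/2-1$.

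It remains to compute. For $n$ odd we get $\sum_v\binom{d^+(v)}{2}\ge n\binom{(n-1)/2}{2}=n(n-1)(n-3)/8$, hence
\[
D(C_3)\le n(n-1)\Bigl(\tfrac{n-2}{6}-\tfrac{n-3}{8}\Bigr)=\tfrac{n(n^2-1)}{24}.
\]
For $n$ even we get $\sum_v\binom{d^+(v)}{2}\ge\frac n2\bigl[\binom{n/2}{2}+\binom{n/2-1}{2}\bigr]=n(n-2)^2/8$, hence
\[
D(C_3)\le n(n-2)\Bigl(\tfrac{n-1}{6}-\tfrac{n-2}{8}\Bigr)=\tfrac{n(n^2-4)}{24}.
\]
There is no real obstacle in this argument. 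The only point needing care is justifying the integer balancing step, which the exchange argument handles.
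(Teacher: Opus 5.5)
Your proof is correct and complete. The identity $D(C_3)=\binom{n}{3}-\sum_{v}\binom{d^+(v)}{2}$ holds, and the exchange argument is sound; you rightly apply it to arbitrary integer vectors with sum $\binom{n}{2}$ rather than only to score sequences. The arithmetic also checks out in both parity cases.

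The paper gives no proof of its own: it quotes the bound from the literature and uses it only for $n=6$ in \cref{lemma:T6}, where your bound gives $8$. Your argument is the standard classical proof of this result.
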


\begin{proof}[Proof of~\cref{lemma:T6}]
Let $D$ be a tournament on six vertices. Every $3$-subset of $V(D)$ induces either a transitive tournament or a directed $3$-cycle. By~\cref{thm:moon-moser-3cycles}, the number of directed $3$-cycles in $D$ is at most $8$. Hence $D$ contains at least $\binom{6}{3}-8=12$ transitive tournaments.

Fix a transitive tournament $T_3$ in $D$. Each arc of $T_3$ lies in at most three transitive tournaments other than $T_3$ itself. Hence, there exists one that is vertex-disjoint from $T_3$. Let $T'_3$ be such a transitive tournament. Then $T$ and $T'$ form the desired partition of $V(D)$.
\end{proof}

For a digraph $D$ and a vertex $v\in V(D)$, let $d^+_D(v)$ be the {\it out-degree} of $v$, i.e., $d^+_D(v)=|\{u\in V(D): (v,u)\in A(D)\}|$, $d^-_D(v)$ be the {\it in-degree} of $v$, i.e., $d^-_D(v)=|\{u\in V(D): (u,v)\in A(D)\}|$, and  the {\it total degree} $d_D(v):=d^+_D(v)+d^+_D(v)$. 
Moreover, for a subset $U\subseteq V(D)$, the \emph{total connection} between a vertex $v$ and $U$ is
\[
e(v, U):=\bigl|\{(v, u)\in A(D): u\in U\}\bigr|+\bigl|\{(u,v)\in A(D):u\in U\}\bigr|,
\]
that is, the number of arcs with one endpoint $v$ and the other in $U$. If $U=\{u\}$ is a singleton, then we simply write $e(v, u)$ instead. 

\subsection{Proof of~\cref{Thm:copy-three-digraphs}} For the sake of clarity, we split the argument into two separate parts: one for the extremal value and the other for the uniqueness of the extremal digraph.

\begin{proof}[Proof of the extremal value.] 
Let $D:=D_{r_1}\oplus D_{r_2}\oplus D_{r_3}$, where $r=r_1+r_2+r_3$, and assume that $r\ge 11$ and $0\le r_i\le r$ for $i\in[3]$. 
For all $n\in \mathbb N$, we first prove that
\[
\ex(n,D)=f(n,r).
\]

If $n\le r-1$, then the statement is immediate, since $|A(\overleftrightarrow{K_{n}})|=f(n,r)$. Suppose, for a contradiction, that the statement fails, and let $G$ be a counterexample with the minimum possible number $n\ge r$ of vertices. Thus, $G$ is $D$-free and
\begin{equation}\label{eq: low bound of counterexample}
    |A(G)|\ge f(n,r) +1,
\end{equation}
while every digraph $G'$ with $|V(G')|<n$ and $|A(G')|>f (|V(G')|,r)$ contains a copy of $D$. Consequently, each vertex $v$ in $G$ satisfies
\begin{equation}\label{eq: vertex-low-degree}
e(v, V(G)\setminus \{v\}) \ge 2\frac{r-2}{r-1}(n-1)+1.
\end{equation}

Let $G_1$ be an $(r-1)$-vertex induced subdigraph of $G$ with the maximum possible number of arcs. 
We will first obtain some structural information about $G$ from the following claims. 

\begin{claim}\label{clm:G1to G2} For every $v\in V(G)\setminus V(G_1)$, $e(v, V(G_1))\le 2r-3$. Moreover,
there exists $v_1 \in V(G)\setminus V(G_1)$ such that $e(v_1, V(G_1))=2r-3$.
\end{claim}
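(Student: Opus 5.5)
The plan is to get both parts of the claim from two ingredients: the maximality of $G_1$ for the upper bound, and the minimality of the counterexample $G$ for the existence of $v_1$. The upper bound $2r-3$ will come from a swapping argument. The existence of $v_1$ will come from counting arcs of $G$ through the split $V(G_1)\cup (V(G)\setminus V(G_1))$ and comparing with $f(n,r)$ via \cref{pro:relation}.

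\emph{Upper bound.} Suppose some $v\notin V(G_1)$ has $e(v,V(G_1))\ge 2r-2=2|V(G_1)|$. Then $v$ sends and receives an arc to and from every vertex of $G_1$. Fix $u\in V(G_1)$ and consider the $(r-1)$-set $(V(G_1)\setminus\{u\})\cup\{v\}$. It spans
\[
|A(G_1)|-e(u,V(G_1)\setminus\{u\})+2(r-2)
\]
arcs. By the maximality of $G_1$ this is at most $|A(G_1)|$, so $e(u,V(G_1)\setminus\{u\})\ge 2(r-2)$. This value is the largest possible, so every $u$ is joined in both directions to all other vertices of $G_1$, i.e.\ $G_1\cong\overleftrightarrow{K_{r-1}}$. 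Then $G[V(G_1)\cup\{v\}]\cong\overleftrightarrow{K_r}$. This complete digraph contains every $r$-vertex digraph, in particular $D$, contradicting that $G$ is $D$-free. Hence $e(v,V(G_1))\le 2r-3$ for all $v\notin V(G_1)$.

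\emph{Existence of $v_1$.} Suppose instead that $e(v,V(G_1))\le 2r-4$ for every $v\notin V(G_1)$. The digraph $G-V(G_1)$ has $n-r+1$ vertices, with $1\le n-r+1<n$, and it is $D$-free. By the minimality of $G$ (together with the trivial case of at most $r-1$ vertices), it has at most $f(n-r+1,r)$ arcs. Also $|A(G_1)|\le (r-1)(r-2)=f(r-1,r)$. Splitting the arcs of $G$ into those inside $G_1$, those inside $G-V(G_1)$, and those between the two parts gives
\[
|A(G)|\le f(n-r+1,r)+f(r-1,r)+(2r-4)(n-r+1).
\]
Now apply \cref{pro:relation} with $n_1=r-1$ and $n_2=n-r+1$. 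Here $\alpha_1=0$, so $\alpha_1+\alpha_2<r-1$ and the correction term $2\alpha_1\alpha_2/(r-1)$ vanishes. This yields exactly
\[
f(n,r)=f(n-r+1,r)+f(r-1,r)+2(r-2)(n-r+1).
\]
Hence $|A(G)|\le f(n,r)$, contradicting \eqref{eq: low bound of counterexample}. So some $v_1$ has $e(v_1,V(G_1))\ge 2r-3$, and by the first part equality holds.

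I expect no real obstacle. The only points to check are that the identity from \cref{pro:relation} is exact, which it is because $r-1\equiv 0$, and that minimality applies to $G-V(G_1)$, including the small case where it has at most $r-1$ vertices.
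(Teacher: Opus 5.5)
Your proposal is correct and follows the paper's proof. The upper bound comes from a vertex with $2r-2$ arcs to $G_1$ producing a copy of $\overleftrightarrow{K_r}\supseteq D$. The existence of $v_1$ comes from the same count $|A(G)|\le |A(G_1)|+(2r-4)(n-r+1)+f(n-r+1,r)\le f(n,r)$, using the proposition with $\alpha_1=0$. Your swapping argument, which uses the maximality of $G_1$ to show that $G_1$ itself is $\overleftrightarrow{K_{r-1}}$, supplies a step the paper only asserts.
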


\begin{proof}
If there exists $v' \in V(G)\setminus V(G_1)$ such that $e(v', V(G_1))=2r-2$. This implies that $G[V(G_1)\cup\{v'\}]$ would be the complete digraph $\overleftrightarrow{K_r}$, which contains $D$ as a subdigraph. Moreover, if $e(v, V(G_1))\le 2r-4$ for every $v\in V(G)\setminus V(G_1)$, then counting arcs induced on $V(G)\setminus V(G_1)$ and between $V(G_1)$ and $V(G)\setminus V(G_1)$ and using the minimality of $G$ together with~\cref{pro:relation} yields
\[
|A(G)|
\le |A(G_1)|+(2r-4)(n-r+1)+ f(n-r+1,r)
\le f(n,r),
\]
contradicting~\eqref{eq: low bound of counterexample}.
\end{proof}
 
Let $G_2:=G[V(G_1)\cup\{v_1\}]$. Then we have the following claim.
\begin{claim}\label{clm:G2-max}
Among all $r$-vertex subdigraphs of $G$, the subdigraph $G_2$ maximizes the number of arcs. Moreover, $G_2$ is a sum of $s_1$ copies of $T_1$ and $s_2$ copies of $T_2$ for some integer $0\le s_1\le 1$:
\[
G_2 = \underbrace{T_1\oplus\cdots\oplus T_1}_{s_1\text{ summands}}
\ \oplus\
\underbrace{T_2\oplus\cdots\oplus T_2}_{s_2\text{ summands}}, \qquad s_1+2s_2=r.
\]
\end{claim}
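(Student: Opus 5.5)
The plan is to prove the two parts of the claim in order. Both rest on the extremality of $G_1$ and on the fact that $G$ is $D$-free. Note that $|A(G_2)|=|A(G_1)|+2r-3$ by \cref{clm:G1to G2}.

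\emph{Maximality.} Let $H$ be any $r$-vertex induced subdigraph of $G$. Since $H\subseteq G$ is $D$-free, $H\neq\overleftrightarrow{K_r}$, so some vertex $w\in V(H)$ has $e(w,V(H)\setminus\{w\})\le 2r-3$. The digraph $H-w$ has $r-1$ vertices, so $|A(H-w)|\le |A(G_1)|$ by the choice of $G_1$. Hence
\[
|A(H)|\le |A(G_1)|+2r-3=|A(G_2)|.
\]

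\emph{Structure.} I would use a swapping argument. Suppose some $u\in V(G_2)$ has $e(u,V(G_2)\setminus\{u\})\le 2r-4$. Then $G_2-u$ is an $(r-1)$-vertex induced subdigraph with
\[
|A(G_2-u)|\ge |A(G_2)|-(2r-4)=|A(G_1)|+1,
\]
which contradicts the maximality of $G_1$. So every vertex of $G_2$ misses at most one of its $2r-2$ possible arcs. Consequently no pair of vertices misses both arcs, and the pairs missing exactly one arc are pairwise disjoint, i.e.\ they form a matching. Each matched pair spans a $T_2$ and each unmatched vertex is a $T_1$. All arcs between distinct such blocks are present in both directions, so $G_2$ is a sum of $s_1$ copies of $T_1$ and $s_2$ copies of $T_2$ with $s_1+2s_2=r$.

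\emph{Bounding $s_1$.} It remains to show $s_1\le 1$. Suppose $s_1\ge 2$; I would then embed $D=D_{r_1}\oplus D_{r_2}\oplus D_{r_3}$ into $G_2$, contradicting $D$-freeness. Inside each tournament $D_{r_i}$, pair up the vertices, leaving one leftover vertex exactly when $r_i$ is odd. Map each pair to its own $T_2$-summand, oriented to agree with the arc of $D_{r_i}$, as in \cref{pro:sums}. Map each leftover vertex to its own $T_1$-summand.

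Any two vertices placed in different summands are joined by arcs in both directions, so every arc of $D$ is present. This covers arcs between different $D_{r_i}$ and arcs within one $D_{r_i}$ across different summands. The construction therefore succeeds provided there are enough summands. The number of pairs is $(r-k)/2$, where $k\le 3$ is the number of odd $r_i$. The embedding needs $s_1\ge k$, and then automatically $s_2=(r-s_1)/2\le (r-k)/2$ is the supply of $T_2$-summands required; if $s_2$ exceeds the number of pairs, the extra $T_2$'s simply go unused.

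The parity count is the delicate step. Since $s_1\equiv r\equiv k\pmod 2$:
\begin{itemize}
\item if $k=3$, then $s_1$ is odd and at least $2$, so $s_1\ge 3=k$;
\item if $k\le 2$, then $s_1\ge 2\ge k$.
\end{itemize}
In every case $D\subseteq G_2$, which is a contradiction. Hence $s_1\le 1$.
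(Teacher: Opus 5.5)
Your maximality argument is the same as the paper's. Your structural step is the same deletion idea, but more complete: you apply it to every vertex of $G_2$, so each vertex misses at most one arc and the deficient pairs form a matching. The paper only treats $v_1$ and its partner $u_1$ and then asserts the decomposition. The gap is in the final embedding, where you bound $s_1$.

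You map each of the $(r-k)/2$ same-tournament pairs to its own $T_2$-summand, and that requires $s_2\ge (r-k)/2$. What you actually derive is the opposite inequality, $s_2=(r-s_1)/2\le (r-k)/2$, and it is strict whenever $s_1>k$. For example, if $k=0$ then $s_1\ge 2$, and if $k=1$ then $s_1\ge 3$. In those cases there are too few $T_2$-summands, and your embedding does not say where the surplus pairs go. The fallback ``extra $T_2$'s simply go unused'' never occurs, since $D$ and $G_2$ both have exactly $r$ vertices and $s_1\ge k$ forces $s_2\le(r-k)/2$.

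The repair is short, and your inequality is exactly the condition it needs:
\begin{itemize}
\item Fill each of the $s_2$ $T_2$-summands with a pair from a single $D_{r_i}$, oriented to match its arc. This is possible because there are $\sum_i\lfloor r_i/2\rfloor=(r-k)/2\ge s_2$ disjoint such pairs.
\item Send the remaining $r-2s_2=s_1$ vertices of $D$ to the $T_1$-summands, one each. A $T_1$-summand is joined in both directions to every other vertex of $G_2$, so every arc of $D$ at such a vertex is present.
\end{itemize}
Equivalently, by your parity observation the $s_1-k$ surplus $T_1$-summands come in an even number. They can be grouped in twos, and $T_1\oplus T_1=\overleftrightarrow{K_2}$ hosts either orientation of $T_2$. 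With this correction your proof is complete.
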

\begin{proof}
Let $G'$ be any $r$-vertex subdigraph of $G$. Since $G'$ is $D$-free, there exists $v' \in V(G')$ such that $e\bigl(v',\,V(G')\setminus\{v'\}\bigr) \le 2r-3$, and
\[
|A(G')|
= e\bigl(v',V(G')\setminus\{v\}\bigr)+|A(G'-v')|
\le (2r-3)+|A(G_1)|
=|A(G_2)|.
\]
Recalling the construction of $G_2$,  $e(v_1, V(G_2)\setminus\{v_1\})= 2r-3$. Thus, $v_1$ is bidirectionally adjacent to all but \emph{exactly one} vertex $u_1$ of $G_1$, and between $v_1$ and $u_1$ there is exactly one arc. By the maximality of $G_1$, $u_1$ is bidirectionally adjacent to all vertices in $V(G_1)\setminus\{u_1\}$. 
Consequently, $G_2$ decomposes as a sum of $T_1$'s and $T_2$'s. 
Moreover, by~\cref{pro:sums}, if $s_1\ge 2$ then $G_2$ contains a copy of $D$, a contradiction. Hence, $s_1\in\{0,1\}$.
\end{proof}

By~\cref{clm:G2-max}, we next split into two cases.

\medskip
\noindent\textbf{Case 1: $s_1 = 0$ (so $r$ is even).} 
Let 
\[
G_2 = T^1_2\oplus T^2_2\oplus \cdots\oplus T^{r/2}_2,
\]
where each $T^i_2$ is a copy of $T_2$ for $i\in [\frac{r}{2}]$.
Without loss of generality, assume that $r_1$ and $r_2$ are odd with $r_1\ge r_2$ and $r_3$ is even; if all three are even, then $G_2$ would already contain $D$ by~\cref{pro:sums}.

\begin{claim}\label{clm: vertex degree in G2} 
Every induced subgraph $G'_2$ of $G$ that is isomorphic to $G_2$ satisfies that $e(v, V(G'_2))\le 2r-2$ for each $v\in V(G)\setminus V(G'_2)$.
\end{claim}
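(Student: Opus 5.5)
The claim follows from \cref{clm:G2-max}: $G_2$, and hence any induced copy $G'_2$ of it, has the maximum number of arcs among all $r$-vertex subdigraphs of $G$. I will show that an outside vertex $v$ with $e(v,V(G'_2))\ge 2r-1$ could be swapped into $G'_2$ to produce an $r$-vertex subdigraph with strictly more arcs, which is impossible.

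First I record the degrees inside $G'_2$. Since $G'_2\cong G_2=T^1_2\oplus\cdots\oplus T^{r/2}_2$, every vertex $x\in V(G'_2)$ is bidirectionally adjacent to every vertex outside its own $T_2$-summand, and it is joined to its partner by exactly one arc. Hence
\[
e\bigl(x,V(G'_2)\setminus\{x\}\bigr)=2(r-2)+1=2r-3 \qquad\text{for every } x\in V(G'_2),
\]
and $|A(G'_2)|=|A(G_2)|$.

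Now suppose for contradiction that some $v\in V(G)\setminus V(G'_2)$ has $e(v,V(G'_2))\ge 2r-1$. Since $|V(G'_2)|=r$, we have $e(v,V(G'_2))\le 2r$, so $v$ misses at most one of the $2r$ possible arcs to $V(G'_2)$. Choose a vertex $u\in V(G'_2)$ as follows:
\begin{enumerate}
\item if $e(v,V(G'_2))=2r-1$, let $u$ be the unique vertex with $e(v,u)=1$;
\item if $e(v,V(G'_2))=2r$, let $u$ be arbitrary, so $e(v,u)=2$.
\end{enumerate}
Let $G''$ be the subdigraph of $G$ induced by $(V(G'_2)\setminus\{u\})\cup\{v\}$. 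Counting arcs gives
\[
|A(G'')|=|A(G'_2)|-(2r-3)+\bigl(e(v,V(G'_2))-e(v,u)\bigr).
\]
In both cases $e(v,V(G'_2))-e(v,u)=2r-2$, so $|A(G'')|=|A(G_2)|+1$. Since $G''$ is an $r$-vertex subdigraph of $G$, this contradicts the maximality of $G_2$ in \cref{clm:G2-max}. Therefore $e(v,V(G'_2))\le 2r-2$ for every $v\notin V(G'_2)$.

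The only step needing care is that the arc maximality of \cref{clm:G2-max} transfers to an arbitrary induced copy $G'_2$. This holds because the maximality there is a statement about the number of arcs, which is invariant under isomorphism.
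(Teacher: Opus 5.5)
Your proof is correct and uses the same swapping argument as the paper: replacing a suitable vertex $u$ of $G'_2$ by $v$ produces an $r$-vertex subdigraph with $|A(G_2)|+1$ arcs, contradicting \cref{clm:G2-max}. Your version is slightly more complete than the paper's one-line proof, since you verify that every vertex has internal degree $2r-3$ and you also treat the case $e(v,V(G'_2))=2r$, which the paper does not mention.
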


\begin{proof}
Suppose that there is $v\in V(G)\setminus V(G'_2)$ such that $e(v, V(G'_2))=2r-1$. We can replace a suitable vertex of $G'_2$ by $v$ then which yields
an $r$-vertex subdigraph with more arcs than $G'_2$,  contradicting~\cref{clm:G2-max}.
\end{proof}
Let
\[
X:=\{v\in V(G)\setminus V(G_2): e(v,V(G_2))=2r-2\},
\qquad x:=|X|.
\]
Then we have
\[
|A(G)|\le f(n-r, r)+ x(2r-2)+ (n - r - x)(2r - 3)+|A(G_2)|.
\]
Note that $|A(G_2)|=r(r-1)-r/2$. By the low bound of $|A(G)|$ in~\eqref{eq: low bound of counterexample} and~\cref{pro:relation},
\begin{equation}\label{eq:x-lb}
x\ge \frac{2nr-6n-r^2+3r+2}{2(r-1)}.
\end{equation}
Note that for each $w\in X$, it satisfies the following two properties: 
\stepcounter{propcounter}
\begin{enumerate}[label = {{\rm (\Alph{propcounter}\arabic{enumi})}}]
\item \label{item:X1} By the maximality of $G_2$, there is no vertex $v\in V(G_2)$ such that $e(w, V(G_2)\setminus \{v\})=e(w, V(G_2))=2r-2$. Otherwise, we can obtain an $r$-vertex subdigraph with more arcs than $G_2$ by replacing $v$ by $w$.
\item \label{item:X2} By the maximality of $G_1$, there is no two vertices $u_1, u_2 \in V(G_2)$ such that $e(w, \{u_1\})=e(w, \{u_2\})=1$ but $e(\{u_1\}, \{u_2\})=2$. Otherwise, we can obtain an $(r-1)$-vertex subdigraph with more arcs than $G_1$ by considering $G[V(G_1)\setminus\{v_1, v_2\}\cup \{w\}]$.
\end{enumerate}

\begin{claim}\label{G3}
There exists an $(r+1)$-vertex induced subdigraph $G_3$ such that $G_3$ is a sum of exactly one $T_3$ and $\frac r2-1$ copies of $T_2$, that is,
\[
G_3 = T_3 \oplus T^1_2\oplus  \cdots\oplus T^{r/2-1}_2.
\]
\end{claim}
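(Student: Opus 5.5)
The plan is to pick a single vertex $w\in X$ and show that adding it to $G_2$ gives the required $G_3$. This needs $X\neq\emptyset$. By \eqref{eq:x-lb}, $x>0$ as soon as $2nr-6n-r^2+3r+2>0$, i.e.\ $n(2r-6)>r^2-3r-2$. This holds for all $n\ge r$, since $r(2r-6)-(r^2-3r-2)=r^2-3r+2>0$ when $r\ge 11$. So fix $w\in X$, which satisfies $e(w,V(G_2))=2r-2$.

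Next we determine how $w$ attaches to each summand $T^i_2=\{a_i,b_i\}$. Since $w$ sends $2r-2$ arcs into $r$ vertices, it misses exactly two of the $2r$ possible arcs. By \ref{item:X1}, $w$ cannot be bidirectionally joined to all vertices but one, because then deleting that vertex would leave $e(w,\cdot)=2r-2$. So the two missing arcs lie at two distinct vertices $u,u'$ with $e(w,u)=e(w,u')=1$, and $w$ is bidirectionally adjacent to every other vertex of $G_2$. By \ref{item:X2}, $u$ and $u'$ are not bidirectionally adjacent in $G_2$. In $G_2$ the only non-bidirectional pairs are the pairs $\{a_i,b_i\}$, so $\{u,u'\}=\{a_i,b_i\}$ for some $i$. (The argument for \ref{item:X2} should be run with $G_1$ replaced by $G_2$ minus a vertex not in $\{u,u'\}$; all such $(r-1)$-sets have $|A(G_1)|$ arcs because $G_2$ is vertex-transitive up to summand type. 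I will check this carefully, since $v_1$ was specific.)

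Then $G_3:=G[V(G_2)\cup\{w\}]$ is the sum of $T:=G[\{a_i,b_i,w\}]$ and the remaining $r/2-1$ copies of $T_2$. Here $T$ is a tournament on three vertices: it has exactly one arc between $a_i$ and $b_i$, and exactly one arc between $w$ and each of $a_i,b_i$. So $T\in\{T_3,C_3\}$.

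The main obstacle is excluding $T=C_3$. If $T=C_3$, I would exploit the slack in $D$. Recall that $r_1,r_2$ are odd and $r_3$ is even. The summand $C_3$ together with the $T_2$'s should host $D$ whenever one of $D_{r_1},D_{r_2}$ contains a vertex whose removal leaves a tournament that partitions compatibly. Direct embedding fails, however, because $D_{r_1}$ may be transitive-like and not contain $C_3$. The alternative is an exchange argument. Every vertex $w'\in X$ produces such a triangle on some summand pair, and $x$ is large, linear in $n$, by \eqref{eq:x-lb}. By pigeonhole, two vertices $w,w'\in X$ attach to the same pair $\{a_i,b_i\}$, or to different pairs. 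In either case I would form an $(r+2)$-set such as $G_2\cup\{w,w'\}$. Depending on $e(w,w')$, this set either contains $\overleftrightarrow{K}$-like structure with two $3$-vertex tournament summands (a $T_3$ and a $C_3$, or two $C_3$'s), or a switch of $w$ with $a_i$ or $b_i$ produces an induced $(r+1)$-vertex subdigraph whose three-vertex summand is transitive.

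Concretely, in the bad case where every $w\in X$ gives $C_3$, choose $w,w'$ on different pairs $i\ne j$. If $e(w,w')\ge1$, then $G_2\cup\{w,w'\}$ minus one vertex contains two $C_3$ summands. Lemma~\ref{lemma:T6} and Proposition~\ref{pro:sums} with $r\ge11$ then embed $D$, since any tournament on $r_1$ odd vertices can be split into a part on three vertices plus pairs, and $C_3$ can absorb an odd remainder when $D_{r_1}$ contains a $C_3$. Otherwise we fall back to the $T_3$ case. If all $w$ with $e(w,w')=0$ cluster, this contradicts the degree bound \eqref{eq: vertex-low-degree}. I expect this case analysis, matching $C_3$ against the structure of the odd tournaments $D_{r_1},D_{r_2}$, to be the delicate part.
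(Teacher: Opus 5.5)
\textbf{Verdict: genuine gap.} Your setup is correct and matches the paper: $X\neq\emptyset$, and each $w\in X$ has exactly one arc to each endpoint of a unique pair $\{a_i,b_i\}$ and is bidirectional to everything else in $G_2$. The gap is exactly where you expect it. Nothing in the proposal rules out that \emph{every} $w\in X$ closes a $C_3$ with its pair. Your opening plan of using an arbitrary $w$ therefore fails, and the claim has to be proved by contradiction.

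The concrete route you sketch, taking $w,w'$ on different pairs $i\neq j$, cannot close. If $e(w,w')=1$, then $V(G_2)\cup\{w,w'\}$ is not a sum at all. If $e(w,w')=2$, it induces $C_3\oplus C_3\oplus T_2\oplus\cdots\oplus T_2$, with $(r-4)/2$ copies of $T_2$. This digraph has no induced $T_3\oplus T^1_2\oplus\cdots\oplus T^{r/2-1}_2$, since an induced $T_3$ would have to lie inside one summand. It need not contain $D$ either. Take $r=12$, $r_1=r_2=3$, $r_3=6$, all three parts transitive. Vertices of different parts must go to different summands, and each summand hosts at most two vertices of one part. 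So you need $\lceil r_1/2\rceil+\lceil r_2/2\rceil+r_3/2=(r+2)/2$ summands, but only $r/2$ exist. Your caveat ``when $D_{r_1}$ contains a $C_3$'' leaves exactly this case open, and the fallbacks (``fall back to the $T_3$ case'', ``if all $w$ cluster'') are not arguments.

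\textbf{Missing idea: compare two vertices on the same pair.} Split $X$ into classes $X_1,\dots,X_{r/2}$ by pair. If $w,w'\in X_i$ both form $C_3$ with $\{a_i,b_i\}$, say with $a_i\to b_i$, they have the same pattern $b_i\to w\to a_i$ and $b_i\to w'\to a_i$.
\begin{itemize}
\item If exactly one arc joins them, say $w\to w'$, then $\{w,w',a_i\}$ induces $T_3$. Together with the other $r/2-1$ summands this gives an induced $G_3$.
\item If both arcs are present, then $(V(G_2)\setminus\{b_i\})\cup\{w\}$ is a copy of $G_2$ to which $w'$ sends $2r-1$ arcs. This contradicts \cref{clm: vertex degree in G2}.
\end{itemize}
Hence, if no $G_3$ exists, every $X_i$ is independent. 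Then each $w\in X_i$ satisfies $e(w,V(G)\setminus\{w\})\le 2(n-|X_i|-1)$, and \eqref{eq: vertex-low-degree} gives $|X_i|\le \frac{n-r}{r-1}+\frac12$. Summing over the $r/2$ classes yields $x\le\frac{r(n-r)}{2(r-1)}+\frac r4$, which contradicts \eqref{eq:x-lb} since $r\ge 6$ and $n\ge r$. So the useful pigeonhole concerns vertices in the same class, which must be non-adjacent and hence few, not vertices on different pairs.
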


\begin{proof}
For a contradiction, assume that such $G_3$ does not exist in $G$. 
Combing~\ref{item:X1} and~\ref{item:X2}, for every $w\in X$, there must exist a unique $T_2$-summand $T^i_2$ of $G_2$ such that $w$ is connected to each of the two endpoints of $T^i_2$ by exactly one arc. In particular, $G[w\cup V(T^i_2)]$ forms a directed $3$-cycle. Therefore, we can partition $X$ into $\{X_1,\dots,X_{r/2}\}$ by defined
\[
X_i=\{w\in X: G[\{w\}\cup V(T^i_2)] \text{~is a directed } 3\text{-cycle in } G\}.
\]
Moreover,  if there are two distinct vertices  $w_1, w_2\in X$ such that them and a same $T_2$-summand of $G_2$ form a directed $3$-cycle, then $e(\{w_1\}, \{w_2\})=0$, otherwise $G[\{w_1, w_2\}\cup V(T_2)]$ will contain a $T_3$, yielding $G_3$. Therefore, each induced digraph $G[X_i]$ is empty for $i\in [\frac{r}{2}]$, which implies that 
\[
e(w, V(G)\setminus \{w\})\le 2(n-r-|X_1|)+2(r-1)=2(n-|X_1|-1)
\]
for each $w\in X_i$. 
Owing to~\eqref{eq: vertex-low-degree}, we have 
\begin{equation}\label{eq:xi}
|X_i|\le  \frac{n-r}{r-1}+ \frac{1}{2},    
\end{equation}
and hence,
\[
  x = \sum_{i=1}^{r/2} |X_i| \;\le\; \frac{r}{2}\cdot(\frac{n-r}{r-1}+\frac{1}{2})
            = \frac{rn-r^2}{2(r-1)}+\frac{r}{4},
\]
contradicting~\eqref{eq:x-lb} for all $n\ge r\ge 6$. Thus, $G_3$ must exist in $G$.
\end{proof}

Now fix such a subdigraph $G_3$, for convenience, write it as
\[
G_3 = T_3\oplus T^1_2\oplus \cdots\oplus T^{r/2-1}_2.
\]
Note that each $4$-vertex tournament contains a $T_3$ as a subdigraph. Hence, $T_3\oplus T^1_2\oplus \cdots\oplus T^{(r_1-3)/2}_2$ contains $D_1$ when $r_1\ge 4$. Meanwhile, By~\cref{pro:sums}, $ T^{(r_1-1)/2}_2\oplus \cdots\oplus T^{(r_1+r_2-2)/2}_2$ contains $D_2$   and $ T^{(r_1+r_2)/2}_2\oplus \cdots\oplus T^{r/2-1}_2$ contains $D_3$, which means that $G_3$ contains $D$, a contradiction.
Therefore, we have $r_1\le 3$. In this case, if exactly one of $D_1$ and $D_2$ is a transitive tournament $T_3$, then $G_3$ will also contain  $D$, a contradiction.
Thus, we can further assume $r_2\le r_1\le 3$ and both $D_1$ and $D_2$ are not a transitive tournament. Due to $r\ge 11$ and $r$ is even, we have $r_3\ge 6$. By~\cref{lemma:T6},  $D_3$ contains two vertex-disjoint $T_3$.
Since $G_3$ can be viewed as $G_2$ with an additional vertex $a$, it follows~\cref{clm: vertex degree in G2}
that each vertex $v\in V(G)\setminus V(G_3)$ must satisfy $e(v, V(G_3)\setminus\{a\})\le 2r-2$. Consequently, we have $e(v, V(G_3))\le 2r$. Define  
\[
  Y = \{\,v\in V(G)\setminus V(G_3): e(v, V(G_3)) = 2r\},
  \quad y = |Y|.
\]
Then we have
\[
|A(G)|\le f(n-(r+1), r)+ 2r\cdot y+ (2r - 1)(n - (r+1)-y)+|A(G_3)|.
\]
Note that $|A(G_3)|=(r+1)r-3-r/2+1=r^2-r/2-2$. By the low bound of $|A(G)|$ in~\eqref{eq: low bound of counterexample} and~\cref{pro:relation},
\[
  y \;\ge\; \frac{2nr - 10n - r^2 + 5r + 12}{2(r-1)}.
\]
Note that $G$ is $D$-free that forces each $w \in  Y$ must connect to $G_3$ via an arc to two endpoints of a $T_2$-summand and bidirectional arcs to all other vertices. In particular, $w$ and the $T_2$-summand connected by unidirectional arcs can not form a $T_3$; otherwise, $G[V(G_3)\cup \{w\}]$ also contain $D$, since $D_3$ contains two vertex-disjoint $T_3$.
Next, we partition $Y$ into $\{Y_1,\dots,Y_{r/2-1}\}$ by defined
\[
Y_i=\{w\in Y: G[\{w\}\cup V(T^i_2)] \text{~is a directed } 3\text{-cycle in } G\}.
\]
By an analysis analogous to~\eqref{eq:xi}, all vertex in the same $Y_i$ is non-adjacent in $G$ and every subset  $Y_i$ satisfies 
\[
|Y_i|\le \frac{n-r}{r-1}+ \frac{1}{2},
\]
and hence,
\[
  y =\sum_{i=1}^{r/2-1} |Y_i| \;<\; \frac{r-2}{2}\cdot (\frac{n-r}{r-1}+ \frac{1}{2})
       = \frac{nr - r^2-2n+2r}{2(r-1)}+\frac{r-2}{4},
\]
which is a contradiction with the low bound of $y$. This rules out \textbf{Case~1}.

\medskip
\noindent\textbf{Case 2: $s_1=1$ (so $r$ is odd).}
In this case, 
\[
G_2 =T_1\oplus T^1_2\oplus T^2_2\oplus \cdots\oplus T^{(r-1)/2}_2,
\]
where each $T^i_2$ is a copy of $T_2$ for $i\in [\frac{r-1}{2}]$. Without loss of generality, assume that $r_1,r_2,r_3$ are all odd with $r_1\ge r_2\ge r_3$; if there are two even in $r_1$, $r_2$ and $r_3$, then $G_2$ would already contain $D$ by~\cref{pro:sums}. Moreover, due to $r\ge 11$, $D_1$ contains a $T_3$. We distinguish the following two cases.

\medskip
\noindent\textbf{Case 2.1: There exists $v_2\in V(G)\setminus V(G_2)$ such that $e(v_2, V(G_2))=2r-1$.} By the maximality of $G_2$,  $v_2$ is only adjacent to $T_1$ by exactly one directed edge and bidirectionally adjacent to every other vertex of $G_2$. Let $G_4=G[\{v_2\}\cup V(G_2)]$, then we can write
\[
G_4 =T^1_2\oplus T^2_2\oplus \cdots\oplus T^{(r+1)/2}_2.
\]
A standard maximality argument (as in~\cref{clm:G2-max}) shows that $G_4$ maximizes the number of arcs among all $(r+1)$-vertex subdigraphs of $G$.
Moreover, if there exists a vertex $v \in V(G)\setminus V(G_4)$ such that $e(v, V(G_4)) = 2r + 1$, then replacing an appropriate vertex of $G_4$ by $v$ produces an $(r+1)$-vertex subdigraph 
with more edges than $G_4$, contradicting the maximality of $G_4$. Let
\[
  Z = \{\,v\in V(G)\setminus V(G_4): e(v, V(G_4)) = 2r\},
  \quad z = |Z|.
\]
Then we have
\[
|A(G)|\le f(n-(r+1), r)+ 2r\cdot z+ (2r - 1)(n - (r+1)-z)+|A(G_4)|.
\]
Note that $|A(G_4)|=(r+1)r-(r+1)/2$. By the low bound of $|A(G)|$ in~\eqref{eq: low bound of counterexample} and~\cref{pro:relation},
\begin{equation}\label{eq: zi}
    z \geq \frac{2nr - 10n - r^2 + 2r + 15}{2(r-1)}.
\end{equation}
Note that $G$ is $D$-free that forces each $w \in  Z$ must connect to $G_4$ via an arc to two endpoints of a $T_2$-summand and bidirectional arcs to all other vertices. Partition $Z$ into subsets $\{Z_1, \dots, Z_{(r+1)/2}\}$,  according to the unique $T_2$-summand that connected with $w$ by unidirectional arcs. Since $D_1$ contains $T_3$, all vertex in the same $Z_i$ is non-adjacent in $G$; otherwise, $G$ contains $D$. Analogous to the size of $X_i$, we have
\[
|Z_i| \le  \frac{n-r}{r-1}+ \frac{1}{2},
\]
and hence, we have 
\[
z = \sum_{i = 1}^{(r+1)/2}|Z_i| < \frac{r+1}{2}\cdot (\frac{n-r}{r-1}+ \frac{1}{2}) = \frac{nr + n - r^2 - r}{2(r - 1)}+ \frac{r+1}{4},
\]
which is a contradiction with~\eqref{eq: zi}. This rules out \textbf{Case~2.1}.

\medskip
\noindent\textbf{Case 2.2: Every $v\in V(G)\setminus V(G_2)$ satisfies $e(v, V(G_2))\le 2r-2$.} By the maximality condition of $G_2$, each vertex $v\in V(G)\setminus V(G_2)$ connected to $G_2$ with
exactly $2r-2$ arcs can only follow one of the following three connection patterns:
\stepcounter{propcounter}
\begin{enumerate}[label = {{\rm (\Alph{propcounter}\arabic{enumi})}}]
\item \label{item:pattern-1} $v$ has a unidirectional arc to the unique vertex of the $T_1$-summand and a further unidirectional arc to exactly one vertex of a $T_2$-summand;
\item \label{item:pattern-2} $v$ is nonadjacent to the $T_1$-summand and achieves $2r-2$ connections using only the $T_2$-summands;
\item \label{item:pattern-3} $v$ is adjacent to two unidirectional arcs (one to each vertex) in a single $T_2$-summand, where the $T_2$-summand and $v$ form a directed $3$-cycle $C_3$ in $G$. 
\end{enumerate}

For convenience, we split the proof into the following two subcases.

\medskip
\noindent\textbf{Case 2.2.1: There exists $b\in V(G)\setminus V(G_2)$ that follows~\ref{item:pattern-1}.}  In this case, let $G_5:=G[b \cup V(G_2)]$. 
Note that every $v$ in $V(G)\setminus V(G_5)$ satisfies $e(v, V(G_5))= e(v, V(G_2)) + e(v, \{b\})\le 2r$.
Let 
\[
 W= \{\,v\in V(G)\setminus V(G_5): e(v, V(G_5)) = 2r\}.
\]
Note that $|A(G_5)|=(r+1)r-\frac{r+3}{2}$.
The same counting as before (using~\cref{pro:relation} together with~\eqref{eq: low bound of counterexample}) yields
\[
|W| \geq  \frac{2nr  - 10n - r^2 + 4r + 13}{2(r-1)}.
\]

We now analyze the property of the vertex $b'$ in $W$.  Since $ e(b', V(G_2))=2r-2$ and  $e(b', \{b\})=2$, $b'$ is bidirectionally adjacent to $b$, and the connection pattern between $b'$ and $G_2$ follow one of the three patterns~\ref{item:pattern-1}--\ref{item:pattern-3}.  Through further analysis, we can obtain the following claims.  

\begin{claim}
 $b'$ and $G_2$ cannot follow~\ref{item:pattern-2}.
\end{claim}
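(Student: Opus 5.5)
The plan is to derive a contradiction by exhibiting an $r$-vertex subdigraph of $G$ that is denser than $G_2$, which contradicts \cref{clm:G2-max}. Equivalently, I will show that this subdigraph contains $D$ via \cref{pro:sums}.

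Suppose $b'\in W$ follows~\ref{item:pattern-2}. Write $t$ for the vertex of the $T_1$-summand of $G_2=T_1\oplus T^1_2\oplus\cdots\oplus T^{(r-1)/2}_2$.
\begin{enumerate}
\item Pattern~\ref{item:pattern-2} gives $e(b',\{t\})=0$. Since $e(b',V(G_2))=2r-2$ is spread over the $r-1$ vertices of the $T_2$-summands, $b'$ is bidirectionally adjacent to every vertex of every $T_2$-summand.
\item As noted above, $b'\in W$ also forces $b'$ to be bidirectionally adjacent to $b$.
\item Since $b$ follows~\ref{item:pattern-1}, there is a summand $T^j_2$ with vertex set $\{u,u'\}$ such that $b$ sends exactly one arc to $t$ and exactly one arc to $u$. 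Consequently $b$ is bidirectionally adjacent to $u'$ and to all vertices of the other $T_2$-summands.
\end{enumerate}

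Now consider the $r$-vertex set $S:=\bigl(V(G_2)\setminus\{t,u\}\bigr)\cup\{b,b'\}$. Inside $S$ the structure is as follows.
\begin{enumerate}
\item The vertices $b,b',u'$ are pairwise bidirectionally adjacent. Indeed, $u'$ is joined bidirectionally to $b$ and to $b'$ by the three facts above, and $bb'$ is bidirectional.
\item Each of $b,b'$ is bidirectionally adjacent to all vertices of the $(r-3)/2$ summands $T^i_2$ with $i\ne j$.
\item The vertex $u'$ is bidirectionally adjacent to those summands by the sum structure of $G_2$.
\end{enumerate}
Hence $G[S]$ contains
\[
T_1\oplus T_1\oplus T_1\oplus T^1_2\oplus\cdots\oplus T^{(r-3)/2}_2 .
\]
This digraph has $r(r-1)-\frac{r-3}{2}>r(r-1)-\frac{r-1}{2}=|A(G_2)|$ arcs, which already contradicts the maximality in \cref{clm:G2-max}.

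Alternatively, we can find $D$ directly. In Case~2 all of $r_1,r_2,r_3$ are odd. For each $i\in[3]$, assign one of the three $T_1$-summands together with $(r_i-1)/2$ of the $T_2$-summands to $D_{r_i}$; this uses exactly $(r-3)/2$ copies of $T_2$ in total. By \cref{pro:sums}, the part assigned to $i$ contains $D_{r_i}$. All arcs between different parts are present in both directions, so $G[S]\supseteq D$, contradicting that $G$ is $D$-free.

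The only step needing care is the bookkeeping in the second list: we must check that removing $t$ and $u$ leaves $u'$ acting as a third $T_1$-summand, and that $b$'s single unidirectional arc into $T^j_2$ goes to the discarded vertex $u$ rather than to $u'$. Both follow directly from the definition of pattern~\ref{item:pattern-1} once $u$ is named as the endpoint receiving the unidirectional arc.
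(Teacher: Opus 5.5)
Your proof is correct, but it takes a more direct route than the paper.

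\textbf{The paper's route.} The paper swaps only the $T_1$-vertex $t$ for $b'$. This gives the copy $G_2'=G[(V(G_2)\setminus\{t\})\cup\{b'\}]$ of $G_2$. In it, $e(b,V(G_2'))=(2r-3)+2=2r-1$, and the paper then appeals to Case~2.1.

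\textbf{Your route.} You swap out both $t$ and $u$ and exhibit the $r$-set $S$, which induces $T_1\oplus T_1\oplus T_1\oplus T_2\oplus\cdots\oplus T_2$. This finishes in either of two ways:
\begin{itemize}
\item it contradicts \cref{clm:G2-max} by arc count, with no parity assumption needed;
\item since $r_1,r_2,r_3$ are all odd in Case~2, it contains $D$ by \cref{pro:sums}.
\end{itemize}

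\textbf{How the two relate.} Your set $S$ is exactly $(V(G_2')\setminus\{u\})\cup\{b\}$. In $G_2'$, the unique single arc of $b$ goes to $u$, not to the $T_1$-summand $b'$. That is precisely the configuration Case~2.1 dismisses in one line, ``by the maximality of $G_2$'', before any counting over $Z$.

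\textbf{What each buys.} The paper's reduction is shorter on the page, but it leaves two things implicit:
\begin{itemize}
\item Case~2.1 must be rerun for the new maximum copy $G_2'$ rather than the fixed $G_2$; this is legitimate, since $G_2'$ has the same arc count as $G_2$;
\item that rerun ends at once at the unstated maximality step.
\end{itemize}
Your version is self-contained, uses only \cref{clm:G2-max}, and writes out the very set that this maximality step tacitly relies on.
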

\begin{proof}
Note that $b'$ and $G_2$ follows~\ref{item:pattern-2} means that $e(b', V(T_1))=0$ for the $T_1$-summand of $G_2$ and  $e(b', V(G_2)\setminus V(T_1))=2r-2$. In this case, we can replace the $T_1$-summand in $G_2$ with $b'$ to obtain an induced subgraph $G'_2$ that is  an isomorphic copy of $G_2$ such that $e(b, V(G'_2))=2r-1$, which case we have already ruled in {\bf Case 2.1}.
\end{proof}

\begin{claim}\label{clm:first pattern}
If $b'$ and $G_2$  follow~\ref{item:pattern-1}, then $b'$ can only connect via a unidirectional arc to the
other vertex in the $T_2$-summand that is connected to $b$ via a unidirectional arc.
\end{claim}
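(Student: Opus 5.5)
The plan is to argue by contradiction, comparing against the maximality of $G_2$ among $r$-vertex subdigraphs (\cref{clm:G2-max}). Set up notation first. Let $t$ be the vertex of the $T_1$-summand of $G_2$, and let $\{p,q\}$ be the $T_2$-summand in which $b$ has its unidirectional arc. Say $b$ is joined to $p$ by a single arc. Then $b$ is joined to $t$ and to $p$ by one arc each, and bidirectionally to every other vertex of $G_2$.

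Since $b'\in W$ and $e(b',\{b\})=2$, the vertices $b$ and $b'$ are bidirectionally adjacent. Under pattern~\ref{item:pattern-1}, $b'$ has a single arc to $t$ and a single arc to exactly one vertex $p'$ of some $T_2$-summand, and is bidirectionally adjacent to everything else in $G_2$. We must show $p'=q$. So suppose not; then either $p'=p$, or $p'$ lies in a different $T_2$-summand $\{p',q'\}$.

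\emph{Case $p'=p$.} Consider $S:=(V(G_2)\setminus\{t,p\})\cup\{b,b'\}$, which has $r$ vertices.
\begin{itemize}
\item Inside $S$, both $b$ and $b'$ are bidirectionally adjacent to every other vertex, since their only non-bidirectional pairs involve $t$ and $p$, which were removed.
\item The vertex $q$ is now also bidirectionally adjacent to everything in $S$, because its partner $p$ is gone.
\end{itemize}
So $G[S]$ is a sum of at least three copies of $T_1$ with copies of $T_2$. Concretely, $|A(G[S])|=r(r-1)-\frac{r-3}{2}>|A(G_2)|=r(r-1)-\frac{r-1}{2}$. This contradicts \cref{clm:G2-max}. (Alternatively, $s_1\ge2$ gives a copy of $D$ by \cref{pro:sums}.)

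\emph{Case $p'\in\{p',q'\}\neq\{p,q\}$.} Take $S:=(V(G_2)\setminus\{t,p'\})\cup\{b,b'\}$. Within $S$, the vertex $b'$ is bidirectionally adjacent to everything, since $t$ and $p'$ were removed. The only non-bidirectional pairs in $S$ are then:
\begin{itemize}
\item the pair $bp$;
\item the pair $pq$;
\item the $\frac{r-1}{2}-2$ untouched $T_2$-summands.
\end{itemize}
This gives exactly $\frac{r-1}{2}$ missing arcs, so $|A(G[S])|=|A(G_2)|$ and $G[S]$ is also a maximizer among $r$-vertex subdigraphs.

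Now rerun the structural part of the proof of \cref{clm:G2-max} on $G[S]$. Take $G[S]$ minus a vertex of deficient degree as a maximal $(r-1)$-set. The argument there shows that every maximizer decomposes as a sum of $T_1$'s and $T_2$'s, so each vertex lies in at most one non-bidirectional pair. But in $G[S]$ the vertex $p$ lies in two non-bidirectional pairs, $bp$ and $pq$. This is a contradiction.

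If that rerun is delicate, there is a more direct route. Take the $(r-1)$-set $S\setminus\{b\}$ (or $S\setminus\{q\}$) and count its missing pairs. We expect it to contain strictly fewer missing pairs than $G_1$, which contradicts the maximality of $G_1$, in the spirit of property~\ref{item:X2}.

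The main obstacle is this second case. The simple count only gives equality with $|A(G_2)|$, not a strict improvement. So the contradiction has to come from the rigid sum structure forced on every maximizer, or from a strict improvement over $G_1$ on a suitable $(r-1)$-subset. I would first try the $(r-1)$-set comparison, since it is a pure arc count. I expect it to succeed because removing one endpoint of the path $b$--$p$--$q$ leaves one fewer missing pair than $G_1$ has.
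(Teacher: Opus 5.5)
Your proof is correct. It uses the same case split as the paper (same vertex $p$, versus a vertex in another $T_2$-summand), but the contradiction comes from a different source.

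The paper argues via $D$-freeness:
\begin{itemize}
\item In the first case, $G[V(G_2)\cup\{b,b'\}]$ contains $T_3\oplus T_1\oplus T_2^{2}\oplus\cdots\oplus T_2^{(r-1)/2}$, with the transitive triangle on $\{t,b,b'\}$ and $q$ as the $T_1$.
\item In the second case, it contains $T_3\oplus T_1\oplus T_1\oplus T_2^{3}\oplus\cdots\oplus T_2^{(r-1)/2}$, with $q,q'$ as the $T_1$'s.
\item Both contain $D$, because $r_1\ge 5$ forces a transitive triangle in $D_1$ and \cref{pro:sums} handles the rest.
\end{itemize}
You never look at $D$ again. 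The first case is ruled out by \cref{clm:G2-max}, since your $G[S]$ has one more arc than $G_2$. The second is ruled out by the maximality of $G_1$. Your route is a pure arc count and does not depend on the tournaments $D_i$ at all, so it is more elementary and more robust. The paper's route has the merit of reusing the same embedding device that drives the neighbouring claims, such as \cref{clm:Wi}.

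There is one slip in your fallback: the $(r-1)$-set that beats $G_1$ is $S\setminus\{p\}$, not $S\setminus\{b\}$ or $S\setminus\{q\}$. We have $|A(G_1)|=|A(G_2)|-(2r-3)=(r-1)(r-2)-\frac{r-3}{2}$. Deleting an endpoint $b$ or $q$ of the path $b$--$p$--$q$ leaves $\frac{r-3}{2}$ missing arcs, which only ties $G_1$. Deleting the middle vertex $p$ leaves just the $\frac{r-5}{2}$ untouched summands, i.e.\ $|A(G_1)|+1$ arcs. So your stated expectation about removing an endpoint is wrong.

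This same observation is what your ``rerun'' argument needs in order to be precise. For every $r$-vertex maximizer $M$ and every $w\in V(M)$, maximality of $G_1$ gives $e(w,V(M)\setminus\{w\})=|A(M)|-|A(M-w)|\ge 2r-3$. But $e(p,S\setminus\{p\})=2r-4$. The proof of \cref{clm:G2-max} as written only shows that the single vertex $u_1$ is bidirectional to the rest. So if you rerun it literally, you must delete $b$ or $q$, whose unique single arc goes to $p$. Deleting a vertex of an untouched summand yields no contradiction.
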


\begin{figure}[h]
\centering
\input{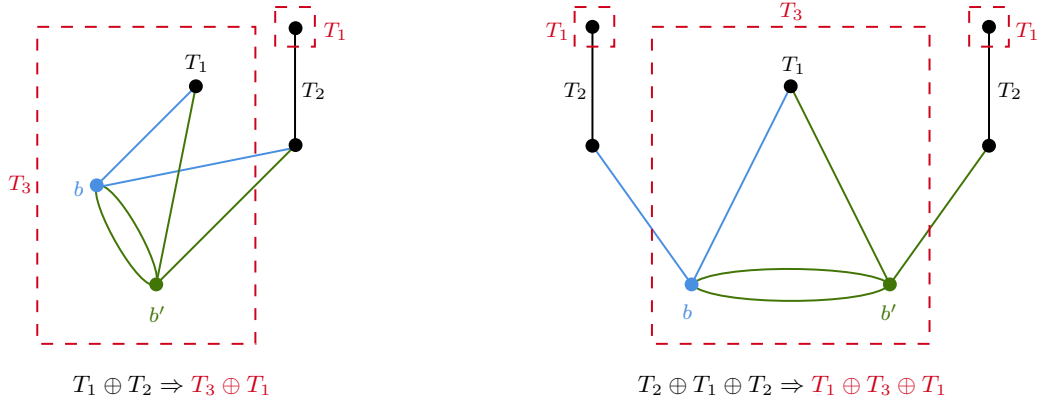}
\caption{{\small Construction of the proof of~\cref{clm:first pattern}.}}
\label{fig1}
\end{figure}

\begin{proof}
We first assume that $b$ and $b'$ have a unidirectional arc to the same vertex of some $T_2$-summand of $G_2$, then the digraph $G[V(G_2)\cup \{b,b'\}]$ contains the sum $T_3 \oplus T_1 \oplus T^2_2 \oplus \dots \oplus T^{(r-1)/2}_2$ (see~\cref{fig1}), which contain $D$, a contradiction.
In addition, if $b$ has a unidirectional arc to one vertex of a $T_2$-summand of $G_2$, and $b'$ has a unidirectional arc to one vertex of another $T_2$-summand of $G_2$. Then the digraph $G[V(G_2)\cup \{b,b'\}]$ contains the sum $T_3 \oplus T_1 \oplus T_1 \oplus T^3_2 \oplus \dots \oplus T^{(r-1)/2}_2$ (see~\cref{fig1}).  By~\cref{pro:sums},  $T_3 \oplus T_1 \oplus T_1 \oplus T^3_2 \oplus \dots \oplus T^{(r-1)/2}_2$ also contain $D$, a contradiction.
\end{proof}

Recalling that $G_2 =T_1\oplus T^1_2\oplus T^2_2\oplus \cdots\oplus T^{(r-1)/2}_2$.
Suppose without loss of generality that
$b$ has a unidirectional arc to the first $T_2$-summand of $G_2$. We partition $W$ into subsets as follows: $W_0$ consists of vertices that satisfy~\cref{clm:first pattern}, and 
$W_1,\dots, W_{(r-1)/2}$ correspond to vertices forming two unidirectional arcs with the $i$th $T_2$-summand of $G_2$. Then we have the following claim.

\begin{claim}\label{clm:Wi}
For each $i\in \{0,1, \dots, (r-1)/2\}$, the induced subdigraph $G[W_i]$ has no arcs.
\end{claim}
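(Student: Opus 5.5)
We argue by contradiction, treating $i=0$ and $i\ge 1$ separately. Suppose $w_1,w_2\in W_i$ are joined by at least one arc. In each case we exhibit either a copy of $D$ or a configuration already excluded (by \cref{clm:G2-max} or by \textbf{Case~2.1}).

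Throughout we use the following facts. Recall $G_2=T_1\oplus T^1_2\oplus\cdots\oplus T^{(r-1)/2}_2$, write $t$ for the vertex of the $T_1$-summand, and write $\{p_j,q_j\}$ for the vertices of $T^j_2$. By assumption $b$ is joined to $p_1$ and to $t$ by single arcs, and is bidirectionally adjacent to every other vertex of $G_2$. Every vertex of $W$ is bidirectionally adjacent to $b$. We are in Case~2, so $r$ is odd, and $r_1\ge r_2\ge r_3$ are all odd. Since $r\ge 11$ we have $r_1\ge 5$, so $D_1$ contains a $T_3$.

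\emph{Case $i\ge 1$.} Here each $w_j$ forms a directed $3$-cycle with $\{p_i,q_i\}$ and is bidirectionally adjacent to all other vertices of $G_5$. Orient so that $p_i\to q_i\to w_j\to p_i$ for $j=1,2$, and say $w_1\to w_2$.
\begin{enumerate}
\item The triple $\{w_1,w_2,p_i\}$ is transitive, since $w_1\to w_2$, $w_1\to p_i$ and $w_2\to p_i$.
\item The triple $\{w_1,w_2,q_i\}$ is transitive as well.
\item Choose the one of $p_i,q_i$ that is bidirectionally adjacent to $b$; this only matters when $i=1$, where we take $q_1$. Call it $x$.
\end{enumerate}
Then $G[\{w_1,w_2,x,t,b\}\cup\bigcup_{j\ne i}\{p_j,q_j\}]$ contains $T_3\oplus T_1\oplus T_1\oplus(\frac{r-3}{2}\text{ copies of }T_2)$; when $i\ge2$ the summand $T^1_2$ is replaced by the single arc $bp_1$. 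To be safe, drop $p_1$ if necessary: this leaves $\frac{r-5}{2}$ copies of $T_2$, which is still enough. Now embed $D$ as follows:
\begin{enumerate}
\item $D_1$ goes into $T_3$ together with $\frac{r_1-3}{2}$ copies of $T_2$;
\item $D_2$ goes into $T_1$ together with $\frac{r_2-1}{2}$ copies of $T_2$;
\item $D_3$ goes into the other $T_1$ together with $\frac{r_3-1}{2}$ copies of $T_2$.
\end{enumerate}
All of these embeddings use \cref{pro:sums}. In total they need $\frac{r-5}{2}$ copies of $T_2$, which are available. This is a contradiction.

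\emph{Case $i=0$.} Each $w_j$ has single arcs to $t$ and $q_1$ and is bidirectionally adjacent to $p_1$, $b$ and all $T^j_2$ with $j\ge2$. Consider
\[
G':=G\Bigl[\{w_1,w_2,b\}\cup\bigcup_{j\ge 2}\{p_j,q_j\}\Bigr],
\]
which has $r$ vertices.
\begin{enumerate}
\item If $w_1w_2$ carries both arcs, then $G'$ has at least $r(r-1)-\frac{r-3}{2}>|A(G_2)|$ arcs. This contradicts the maximality in \cref{clm:G2-max}.
\item If $w_1w_2$ carries a single arc, then $G'\cong T_1\oplus T_2\oplus\cdots\oplus T_2$, with $b$ as the $T_1$-summand and $\{w_1,w_2\}$ as a $T_2$-summand. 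So $G'$ is an isomorphic copy of $G_2$, and hence also arc-maximal. But $p_1\notin V(G')$ is bidirectionally adjacent to $w_1$, $w_2$ and all $T^j_2$ with $j\ge2$, and has a single arc to $b$. Thus $e(p_1,V(G'))=2r-1$, and we are in the situation of \textbf{Case~2.1}, which is already ruled out.
\end{enumerate}
The argument of Case~2.1 applies to any arc-maximal copy of $G_2$, not only the fixed one. This is the same reuse made in the proof that $b'$ cannot follow~\ref{item:pattern-2}.

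The main point needing care is the bookkeeping of which pairs are single arcs. Specifically, one must check that $b$'s single arc into $T^1_2$ does not interfere with either construction; this is why $x$ is chosen as above and why $p_1$ may be discarded. Finally, the $T_2$-count in the case $i\ge1$ must be rechecked after any such discarding.
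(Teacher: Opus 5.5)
Your case $i=0$ is correct and matches the paper's argument: your $G'$ together with $p_1$ is a copy of $G_4$, which is what the paper invokes. Reusing \textbf{Case~2.1} for a different arc-maximal copy of $G_2$ is also what the paper does when it excludes~\ref{item:pattern-2}.

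The case $i\ge1$, however, rests on a false structural claim. You use $t$ and $b$ as the two $T_1$-summands of $T_3\oplus T_1\oplus T_1\oplus\cdots$. But $b$ follows~\ref{item:pattern-1}, so, as you state yourself at the outset, $b$ and $t$ are joined by a \emph{single} arc. Hence $\{t,b\}$ spans a $T_2$, not $T_1\oplus T_1=\overleftrightarrow{K_2}$. Your embedding, which places $D_2$ and $D_3$ each on its own $T_1$, therefore has nowhere to go:
\begin{itemize}
\item for $i=1$, your vertex set spans only $T_3$ together with $\frac{r-1}{2}$ copies of $T_2$, with no $T_1$ at all;
\item for $i\ge2$, after deleting $p_1$, it spans $T_3\oplus T_1$ together with $\frac{r-3}{2}$ copies of $T_2$, with $q_1$ as the only $T_1$.
\end{itemize}

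The repair is short, and it is the paper's route: $b$ is not needed at all. Let $y\in\{p_i,q_i\}$ be the vertex outside your transitive triple. Vertices of $W_i$ are bidirectionally adjacent to $t$ and to every $T^j_2$ with $j\ne i$. So $G[(V(G_2)\setminus\{y\})\cup\{w_1,w_2\}]$ contains $T_3\oplus T_1$ together with $\frac{r-3}{2}$ copies of $T_2$, with $t$ as the $T_1$.

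The missing observation is that one $T_1$ suffices. Embed $D_1$ into $T_3$ plus $\frac{r_1-3}{2}$ copies of $T_2$, using $r_1\ge5$. Embed $D_2$ into $T_1$ plus $\frac{r_2-1}{2}$ copies. Embed $D_3$, which has odd order, into $\frac{r_3+1}{2}$ copies of $T_2$ alone; \cref{pro:sums} covers tournaments on at most $2s$ vertices. This uses exactly $\frac{r-3}{2}$ copies. A similar count, treating $\{t,b\}$ as a $T_2$-summand and placing $D_2$ and $D_3$ into $T_2$'s where needed, would also rescue your own vertex sets.
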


\begin{figure}[h]
\centering
\input{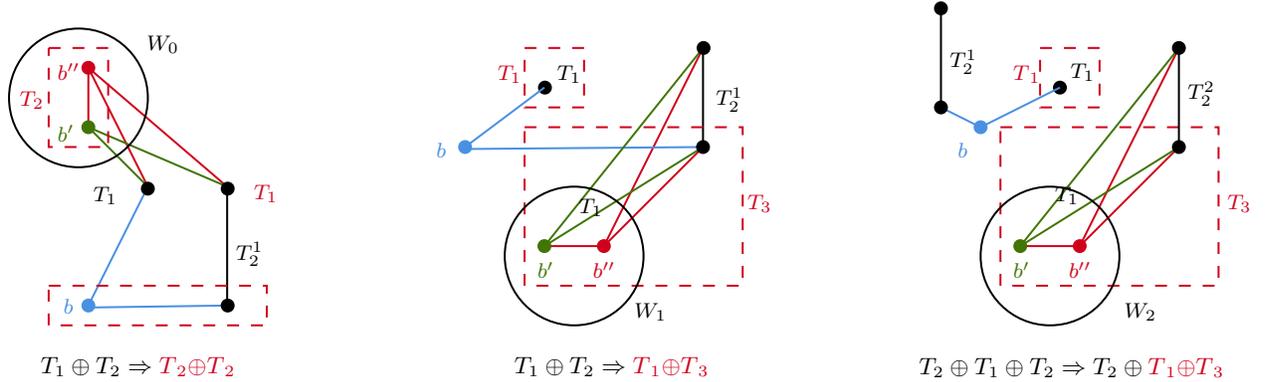}
\caption{{\small Construction of the proof of~\cref{clm:Wi}.}}
\label{fig2}
\end{figure}

\begin{proof}
If there are two vertices $b', b''\in W_0$ such that $e(b', b'')\neq 0$, $G_4$ would arise (see~\cref{fig2}), which has already been ruled out in the proof of {\bf Case 2.1}.
For $i\in [\frac{r-1}{2}]$, if there are two vertices $b', b''\in W_i$ such that $e(b', b'')\neq 0$, then the digraph $G[V(G_2)\cup \{b',b''\}]$ contains the sum $T_3 \oplus T_1 \oplus T^2_2 \oplus \dots \oplus T^{(r-1)/2}_2$ (see~\cref{fig2}), which contain $D$, a contradiction.
\end{proof}

By~\cref{clm:Wi} and an analysis analogous to~\eqref{eq:xi}, we have $|W_i| \leq \frac{n-r}{r-1}+\frac{1}{2}$, and hence 
\[
|W| =\sum_{i=0}^{(r-1)/2} |W_i| \;\le\; \frac{r+1}{2}\cdot (\frac{n-r}{r-1}+ \frac{1}{2})
 = \frac{nr + n - r^2 - r}{2(r - 1)}+ \frac{r+1}{4},
\]
which is a contradiction with the low bound of $|W|$.

\medskip
\noindent\textbf{Case 2.2.2: There does not exist a vertex that follows~\ref{item:pattern-1}.} 
In this case, every vertex $v\in V(G)\setminus V(G_2)$ with $e(v, V(G_2))=2r-2$ must attach to $G_2$ by one of the remaining two patterns. Let
\[
U = \{\,v\in V(G)\setminus V(G_2):\, e(v, V(G_2))=2r-2\}.
\]
Note that $|A(G_2)|=r(r-1)-(r-1)/2$ and every $v\in V(G)\setminus V(G_2)$ satisfies $e(v, V(G_2))\le 2r-2$.
The same counting as before (using~\cref{pro:relation} together with~\eqref{eq: low bound of counterexample}) yields
\[
|U| \geq  \frac{2nr-6n - r^2 + 2r + 3}{2(r-1)}.
\]
We partition $U$ into subsets as follows:
Let $U_0$ be the subset consisting of vertices attaching to $G_2$ by~\ref{item:pattern-2}. $U_1, \dots, U_{(r-1)/2}$ correspond to vertices forming two unidirectional arcs with the $i$th $T_2$-summand of $G_2$. By an analysis analogous to~\cref{clm:Wi}, each induced subdigraph $G[U_i]$ has also no arcs. 
Moreover, similar to~\eqref{eq:xi}, we have  $|U_i|\leq \frac{n-r}{r-1}+\frac{1}{2}$, and hence 
\[
|U| =\sum_{i=0}^{(r-1)/2} |U_i| \;\le\; \frac{r+1}{2}\cdot (\frac{n-r}{r-1}+ \frac{1}{2})
 = \frac{nr + n - r^2 - r}{2(r - 1)}+ \frac{r+1}{4},
\]
which is a contradiction with the low bound of $|U|$. 
This completes the proof of {\bf Case 2.2.2}.  
\end{proof}

\begin{proof}[Proof of the uniqueness of the extremal digraph.] 
Let $G$ be a $D$-free digraph on $n$ vertices with $|A(G)|=f(n,r)$. We will show $G = \overleftrightarrow{T_{n,r}}$.

We use induction on $n$. The assertion is trivial for $n\le r-1$, since $\overleftrightarrow{T_{n,r}}=\overleftrightarrow{K_{n}}$. 
Suppose that the statement fails for $n\ge r$. Among all digraphs that satisfy the hypothesis but not the conclusion, let $G$ be one for which has the minimum number $n\ge r$ of vertices.

Let $G_1$ be an induced subdigraph of $G$ on $r-1$ vertices with the maximum number of arcs. 
We next analyze the structure of $G$ by considering the $e(v, V(G_1))$ for vertices $v \in V(G)\setminus V(G_1)$.

\medskip
\noindent\textbf{Case 1:  For each $v \in V(G)\setminus V(G_1)$, $e(v, V(G_1)) \le 2r-4$.}
In this case, counting arcs induced on $V(G)\setminus V(G_1)$ and between $V(G_1)$ and $V(G)\setminus V(G_1)$ and using the structure of $G$ together with~\cref{pro:relation} yields
\[
f(n,r) =|A(G)|
\le |A(G_1)|+(2r-4)(n-r+1)+ f(n-r+1,r)
= f(n,r),
\]
where equality holds if and only if $G_1=\overleftrightarrow{K_{r-1}}$, and each $v\in V(G)\setminus V(G_1)$ satisfies $e(v, V(G_1))=2r-4$. There are two possible partitions of the arcs connecting  $v$ to $G_1$: either $v$ is connected by single arcs to two vertices $u$ and $u'$ of $G_1$, or by zero arcs to some vertex $u''$ of $G_1$. In the former case we can easily show that $G[\{v\}\cup V(G_1)]$ contains $D$. 
We conclude that, if label the vertices of $G_1$ as
\[
V(G_1)=\{v_1,v_2,\dots,v_{r-1}\},
\]
then $V(G)\setminus V(G_1)$  can be partitioned into sets $V_i$ consisting of those vertices not connected to $v_i$. Note that $G[V_i]$ is empty for each $i\in [r-1]$. Since $G_1$ is complete, the digraph $G$ is a complete $(r-1)$-partite digraph. Moreover, due to $|A(G)|=f(n,r)$, the sizes of the parts of $G$
differ by at most one, i.e., $G$ is $\overleftrightarrow{T_{n,r}}$.

\medskip
\noindent\textbf{Case 2:  There exists a vertex $v \in V(G)\setminus V(G_1)$ such that $e(v,V(G_1)) = 2r-3$.} 
When $n \ge 2r-2$, let $G_2 := G[V(G_1)\cup\{v\}]$. 
By the maximality of $G_1$, it follows that $G_2$ is an induced subdigraph of $G$ on $r$ vertices with the maximum number of arcs. The following argument is identical to the analysis of the proof of the extremal value, and hence yields a contradiction. 

Next, we consider $r \le n \le 2r-3$, and write $n = r-1+\alpha$, where $1 \le \alpha \le r-2$. By~\cref{pro:relation}, we have
\[
f(n,r) - f(n-1,r)
= 2\cdot\frac{r-2}{r-1}(n-1) + 2\cdot\frac{\alpha-1}{r-1}
= 2n-4.
\]

If there exists a vertex $v' \in V(G)$ such that  $d_G(v')= 2n-4$. 
Let $G' := G[V(G)\setminus\{v'\}]$.
Then $G'$ is an $(n-1)$-vertex $D$-free digraph with $|A(G')| = f(n-1,r)$.
By the induction hypothesis, $G'=\overleftrightarrow{T_{n-1,r}}$.
Since the vertex $v'$ has  the total degree $d_G(v')= 2n-4$, it is adjacent to all but exactly one part of $G'$, which follows that the structure of $G$ is $\overleftrightarrow{T_{n,r}}$.

We conclude that,  the total degree $d(v) \ge 2n-3 $ for every vertex $v \in V(G)$. Since $G$ has only $n$ vertices and $G$ is $D$-free,  the structure of $G$ must be a sum of $s_1$ copies of $T_1$ and $\frac{n-s_1}{2}$ copies of $T_2$ for some integer $0\le s_1\le 1$. Moreover, by the structure of $G$ and $G$ is $D$-free, we must have
\[
\frac{n}{2}\le s_1+\frac{n-s_1}{2}<\frac{r+2}{2}.
\]
Since $n = r-1+\alpha$, we obtain that $\alpha<3$.

We next compare the digraph $G$ with the complete digraph
$\overleftrightarrow{K_n}$ in terms of the number of arcs. Note that
\[
|A(\overleftrightarrow{K_n})| - |A(G)|=n(n-1)-f(r-1+\alpha,r)=2\alpha.
\]
By the structure of $G$, the number of $T_1$-summands of $G$ satisfies $n-4\alpha=s_1\le 1$, which means that
\[
\frac{r-2}{3}\le \alpha,
\]
that is a contradiction with the low bound of 
$\alpha$ when $r\ge 11$.
\end{proof}

%%%%%%%%%%%%%%%%%%%%%%%%%%%%%%%%%%%%%%%%%%%%%%%%%%%%%%%%%%%%%%%%%%%%%%%%%%%%%%%%%%%%%%%%
\section{Proof of uniform Tur\'an density theorems}\label{sec-upper-bounds}
This section is devoted to the proofs of ~\cref{Thm:=r-1/r},~\cref{Thm:=r-1/r-square},  ~\cref{Thm:1/27-character} and~\cref{Thm:4/27-character}. 
We first introduce some preliminary definitions and describe the main ideas behind our proofs.

We say that $\mathcal P=(\mathcal C (\mathcal P),\mathcal A (\mathcal P))$ is a \emph{subpalette} of $\mathcal Q=(\mathcal C(\mathcal Q),\mathcal A(\mathcal Q))$, and write $\mathcal P\subseteq \mathcal Q$, if there exists a mapping $f:\mathcal C(\mathcal P)\to \mathcal C(\mathcal Q)$ such that for every $(x,y,z)\in \mathcal A(\mathcal P)$ we have $(f(x),f(y),f(z))\in \mathcal A(\mathcal Q)$. 
An important observation of this relation is that, if 
$\mathcal P\subseteq \mathcal Q$, then each $3$-graph $F$ that is $\mathcal P$-colorable is also $\mathcal Q$-colorable.

Given a palette $\mathcal P=(\mathcal C,\mathcal A)$, its \emph{reverse}, denoted by ${\rm rev}(\mathcal P)$, is the palette $(\mathcal C,{\rm rev}(\mathcal A))$, where
${\rm rev}(\mathcal A):=\{(z,y,x):(x,y,z)\in \mathcal A\}$. An important property of this definition is that, a $3$-graph $F$ is $\mathcal P$-colorable if and only if $F$ is also ${\rm rev}(\mathcal P)$-colorable, by reversing
the order of the vertices of $F$.

The proofs of~\cref{Thm:=r-1/r,Thm:=r-1/r-square,Thm:1/27-character,Thm:4/27-character} utilize a novel method and could be formulated as one. 
More generally, to establish $\pi_{\rm u}(F)=d$ for a $3$-graph $F$ that is $\cp_1$-colorable but not $\cp_2$-colorable, 
using~\cref{Thm:ander}, our method splits the proofs into the following two parts:
\stepcounter{propcounter}
\begin{enumerate}[label = {{\rm (\Alph{propcounter}\arabic{enumi})}}]
\item \label{item:i} show that $d(\cp_2)\ge d$. Then we can obtain $\pi_{\rm u}(F)\ge d$;
\item \label{item:ii} for any palette $\cp$ with $d(\cp)> d$, show that either $\cp_1\subseteq \cp$ or ${\rm rev}(\mathcal P_1)\subseteq \cp$. Then we can obtain $\pi_{\rm u}(F)\le d$.
\end{enumerate}

Next, based on Steps~\ref{item:i} and~\ref{item:ii}, we first give the proof of~\cref{Thm:=r-1/r}.

\begin{proof}[Proof of ~\cref{Thm:=r-1/r}]
Given $r \ge 3$ and two digraphs $D_r, D'_r \in \mathcal F_r$, 
let $F$ be a $\mathcal Q_{D_r\incup D'_r}$-colorable but not $\mathcal Q_{r-1}$-colorable $3$-graph.  It is easy to calculate that $\mathcal Q_{r-1}$ has density $\frac{r-2}{r-1}$. 

Let $\mathcal P=(\cc, \ca)$ be a palette with $d(\mathcal P)>\frac{r-2}{r-1}$. We will show that $Q_{D_r\incup D'_r} \subseteq \cp$.
For palette $\mathcal P=(\cc, \ca)$, we define two auxiliary digraphs $G_L$ and $G_R$ on the vertex set $\cc$. Given $a, b \in \cc$, not necessarily distinct, we add $(a,b)$ in $G_L$ if there exists $c\in \cc$ such that $(a, b, c)\in\ca$. We add $(a,b)$ in $G_R$ if there exists a color $c\in \cc$ such  that $(c, a, b)\in\ca$. Note that $\cp$ has more than $\frac{r-2}{r-1}|\cc|^3$ admissible triples, which implies that $|A(G_L)|>\frac{r-2}{r-1}|\cc|^2$ and $|A(G_R)|>\frac{r-2}{r-1}|\cc|^2$.

When $|\mathcal C|\le r-1$, both $G_L$ and   $G_R$ have loops, since $|A(G_L)|>\frac{r-2}{r-1}|\cc|^2$ and $|A(G_R)|>\frac{r-2}{r-1}|\cc|^2$. By the construction of auxiliary digraphs,  $G_L$ and   $G_R$ have loops which implies that there exist  $(a, a, c), (c', a', a')\in \ca$. 
Recalling the definition of $Q_{D_r\incup D'_r}=(\mathcal C_1\incup\mathcal C_2\incup\mathcal C_3\incup\mathcal C_4, \mathcal A(\mathcal Q^{L}_{D_r}) \incup \mathcal A(\mathcal Q^{R}_{D'_r}))$,  there exists a mapping $f: \mathcal C_1\incup\mathcal C_2\incup\mathcal C_3\incup\mathcal C_4 \to \mathcal C$ such that $f(x_1)=a$ for each $x_1\in \mathcal C_1$, $f(x_2)=c$ for each $x_2\in \mathcal C_2$, $f(x_3)=a'$ each $x_3\in \mathcal C_3$, $f(x_4)=c'$ for each $x_4\in \mathcal C_4$.
Hence, in this case, we have $Q_{D_r\incup D'_r}\subseteq \cp$.

When $|\mathcal C|> r-1$, if $G_L$ and   $G_R$ also have loops, we are done. Now suppose that $G_L$ has no loops.
Since $|A(G_L)|>\frac{r-2}{r-1}|\cc|^2$ and $D_r\in \mathcal F_r$, $G_L$ contains a copy of $D_r$. In this case, we have $\mathcal Q^{L}_{D_r} \subseteq \cp$. The same argument applies to  $G_R$. Hence, we can always conclude that $Q_{D_r\incup D'_r}\subseteq \cp$.
\end{proof}

Before proving~\cref{Thm:=r-1/r-square}, we shall prove a digraph theoretic statement that will later be used in the proof of~\cref{Thm:=r-1/r-square}.

\begin{lemma}\label{lem:digraph-degree-square}
Given $r\ge 2$, let $T_r$ be a transitive tournament on $r$ vertices. If $D$ be a $T_r$-free digraph on $n$ vertices, then 
\begin{equation}
\max \left\{ \sum_{v\in V(D)}
d^+_{D}(v)^2,~\sum_{v\in V(D)}
d^-_{D}(v)^2 \right\}\leq \left(\frac{r-2}{r-1}\right)^2 n^3.
\end{equation}
\end{lemma}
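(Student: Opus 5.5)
The approach is weighted symmetrization: reduce to a blow-up of a bounded digraph, then optimize a cubic form. Both halves of the maximum have the same form, since reversing every arc of $D$ swaps out- and in-degrees and maps a copy of $T_r$ to a copy of $T_r$. So it suffices to bound $S(D):=\sum_v d^+_D(v)^2$.

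\textbf{Step 1: reformulation.} Write
\[
S(D)=\#\{(v,u,w): (v,u),(v,w)\in A(D)\},
\]
where $u=w$ is allowed. This is a homomorphism count of an out-star, so it extends to weight vectors $x\in\Delta(V(D))$ as the cubic form
\[
p_D(x)=\sum_v x_v\Bigl(\sum_{u:(v,u)\in A}x_u\Bigr)^2 .
\]
Blowing up $D$ according to $x$ (independent clone classes) preserves $T_r$-freeness, because a copy of $T_r$ uses at most one vertex per class. Hence it suffices to show $p_D(x)\le\bigl(\frac{r-2}{r-1}\bigr)^2$ for every $T_r$-free $D$ and every $x\in\Delta$. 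Taking $x$ uniform recovers $S(D)/n^3$.

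\textbf{Step 2: symmetrization.} Take a maximizer $x$ whose support is minimal. Suppose two support vertices $u,w$ have no arc in either direction. I plan to compare $u,w$ via their \emph{contributions} $\partial p/\partial x_u$ and $\partial p/\partial x_w$. Moving all weight of one onto the other is the Zykov step; to keep $T_r$-freeness, I would replace $w$ by a clone of $u$. Because $u,w$ are non-adjacent, no term of $p$ contains both $x_u$ and $x_w$ with $u$ as the centre and $w$ as a leaf. However, $p$ is not linear in $(x_u,x_w)$: a common in-neighbour $v$ yields the term $x_v(x_u+x_w+\dots)^2$.

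Here is how I would handle this. After cloning, $u$ and $w$ have identical neighbourhoods. The restriction of $p$ to the segment $x_u+x_w=\mathrm{const}$ is then a convex quadratic in $x_u$, since the coefficient of $x_u^2$ is $\sum_{v\to u}x_v\ge 0$. So its maximum on the segment is attained at an endpoint, and merging all weight onto one of $u,w$ does not decrease $p$. Iterating, and also merging a vertex into a clone of another when both have the same neighbourhood, I reach a maximizer supported on a set $K$ in which every pair of vertices is joined by at least one arc.

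\textbf{Step 3: the semicomplete case, $D[K]$ a $T_r$-free semicomplete digraph.} Every tournament on $2^{r-1}$ vertices contains $T_r$, so $|K|$ is bounded. I would split $K$ by taking the subgraph of bidirected pairs. This subgraph must be $K_{r}$-free in a strong sense: any bidirected clique together with a transitive orientation gives a $T_r$. The cleaner statement to aim for is
\[
p(x)\le\Bigl(\sum_v x_v\bigl(1-x_v-\textstyle\sum_{u\in N^{\rm single}(v)}x_u\,[\text{$u$ not out-neighbour}]\bigr)\Bigr)^2 .
\]
Combined with Cauchy--Schwarz and the Motzkin--Straus bound $\sum_v x_v\,(x\text{-out-degree of }v)\le\frac{r-2}{r-1}$, which is the weighted form of \cref{Thm:one-digraphs} for $T_r$, this would give the claim.

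\textbf{Main obstacle.} Cauchy--Schwarz runs the wrong way for a sum of squares, so Step 3 cannot be finished just by bounding the arc count. The real work is to show that in the semicomplete $T_r$-free case, the out-degree vector $y_v=\sum_{u:(v,u)\in A}x_u$ satisfies $y_v\le\frac{r-2}{r-1}$ pointwise for a suitable maximizer.

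What I would try first is a Lagrange condition. At the maximizer all $\partial p/\partial x_v$ are equal on $K$. From this I would derive that every $v$ has $y_v$ equal to the common average. The average is at most $\frac{r-2}{r-1}$ by the weighted Brown--Harary bound (\cref{Thm:one-digraphs}, since the transitive tournament $T_r\in\mathcal F_r$). Then $p(x)=\sum_v x_v y_v^2\le\bigl(\frac{r-2}{r-1}\bigr)^2$.
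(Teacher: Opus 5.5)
There is a genuine gap, and it is exactly where you place the ``main obstacle''. Steps~1--2 are only reductions. Step~3 carries the whole content of the lemma and is not proved; the displayed inequality there is also unjustified.

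The Lagrange route you propose fails. Since $\partial p/\partial x_v=y_v^2+2\sum_{z:(z,v)\in A}x_zy_z$, equal partial derivatives on $K$ do not force the $y_v$ to coincide. Take the single arc $u\to w$, which is a $T_3$-free semicomplete digraph. The maximizer of $p_D=x_ux_w^2$ is $x=(1/3,2/3)$. There $\partial_up=x_w^2=4/9=2x_ux_w=\partial_wp$, yet $y_u=2/3$ and $y_w=0$. So even the pointwise bound $y_v\le\frac{r-2}{r-1}=\frac12$ fails at a maximizer.

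More fundamentally, your argument never uses that $T_r$ is \emph{transitive}. It uses only that blow-ups preserve freeness and the Brown--Harary arc bound, and both hold equally for the directed triangle $C_3\in\mathcal F_3^*$. But the transitive tournament on $n$ vertices is $C_3$-free and has $\sum_v d^+(v)^2=\sum_{k<n}k^2\sim n^3/3>n^3/4$. So no argument that sees $D$ only through first-order conditions in $x$ and the arc density can close Step~3.

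The missing idea, which is the paper's proof, is that $T_r$ is a source placed in front of a $T_{r-1}$. Hence the out-neighbourhood of any vertex spans a $T_{r-1}$-free digraph, and this permits induction on $r$.

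Take $w$ of maximum out-degree, $S=N^+(w)$ and $t=|S|/n$.
\begin{enumerate}
\item Every $v\notin S$ has $d^+(v)\le|S|$, and every $v\in S$ has $d^+(v)\le d^+_{D[S]}(v)+n-|S|$.
\item Set $\beta=\frac{r-3}{r-2}$ and expand the square. Bound $\sum_{v\in S}d^+_{D[S]}(v)^2\le\beta^2|S|^3$ by induction, and $\sum_{v\in S}d^+_{D[S]}(v)=|A(D[S])|\le\beta|S|^2$ by Brown--Harary.
\item This gives $\sum_vd^+(v)^2\le g(t)n^3$ with $g(t)=\beta^2t^3+2\beta t^2(1-t)+t(1-t)$. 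The maximum of $g$ on $[0,1]$ is $\bigl(\frac{r-2}{r-1}\bigr)^2$, attained at $t=\frac{r-2}{r-1}$.
\item The in-degree case follows by reversing all arcs.
\end{enumerate}
This runs verbatim in your weighted setting if you choose $w$ maximizing $y_w$, so the symmetrization is not needed at all.

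A smaller point on Step~2: your justification is off. After cloning, $p$ is \emph{constant} on the segment $x_u+x_w=\mathrm{const}$, and you never show that the cloning itself does not decrease $p$. The conclusion of Step~2 is still true without any cloning. For non-adjacent $u,w$ in the original $D$, the restriction of $p$ to that segment is a quadratic in $x_u$ with leading coefficient $\sum_{v\in N^-(u)\triangle N^-(w)}x_v\ge0$. It is therefore convex, so the weight can be moved to an endpoint.
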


\begin{proof}
%We prove the out-degree inequality, and the in-degree version follows by reversing all arcs.
We proceed by induction $r$ and $n$. The statement is clear for $r=2$ and each $n\in \mathbb N$, since $D$ is the empty digraph. Moreover, the statement holds for all $r\ge 3$ and $n\le r-1$. 

Now we assume that the statement holds for integers $r'<r$ and $n'<n$ with $r\ge 3$ and $n\ge r$. 
Let $w \in V(D)$ be a vertex with the maximum out-degree, i.e., $d^+_D(w) = \max\left\{d^+_D(v) : v \in V(D)\right\}$. Let $S$ be the set of out-neighbors of $w$ in $D$,  $D[S]$ be the digraph induced on $S$, and  $\overline{S} = V(D) \setminus S$.
We split the sum of out-degree squares into two parts: 
\[
\sum_{v \in V(D)} d^+_D(v)^2 = \sum_{v \in S} d^+_D(v)^2 + \sum_{v \in \overline{S}} d^+_D(v)^2.
\]
Next, we establish the upper bounds for these two parts separately.
Firstly, for any $v \in \overline{S}$, we have $d^+_D(v) \leq d^+_D(v)=|S|$. Hence, we have 
\[
\sum_{v \in \overline{S}} d^+_D(v)^2 \leq (n - |S|) \cdot |S|^2.
\]
For each $v \in S$, note that $d^+_D(v) \le d^+_{D[S]}(v) + n- |S|$. Hence, we have
\[
\sum_{v \in S} d^+_D(v)^2 \leq \sum_{v \in S} d^+_{D[S]}(v)^2 + 2(n - |S|)\sum_{v \in S} d^+_{D[S]}(v) + |S| \cdot (n - |S|)^2.
\]
Since $D$ is $T_r$-free, $D[S]$ is $T_{r-1}$-free. Otherwise, adding $w$ to a $T_{r-1}$ in $D[S]$ forms a $T_r$ in $D$, a contradiction. By~\cref{Thm:one-digraphs}, we have
\[
\sum_{v \in S} d^+_{D[S]}(v)= |A(D[S])|\le \frac{r-3}{r-2} |S|^2.
\]
Moreover, by the induction hypothesis, we have 
\[
\sum_{v \in S} d^+_{D[S]}(v)^2 \leq \left(\frac{r-3}{r-2}\right)^2 |S|^3.
\]
Substituting these into the inequality for $\sum_{v \in S} d^+_D(v)^2$, we obtain that 
\[
\sum_{v \in S} d^+_D(v)^2 \leq \left(\frac{r-3}{r-2}\right)^2 |S|^3 + 2(n - |S|)\frac{r-3}{r-2} |S|^2 + |S| \cdot (n - |S|)^2.
\]
Combining these two parts, the total sum satisfies that
\begin{equation}\label{eq:sum all v}
\sum_{v \in V(D)} d^+_D(v)^2 
\leq \left(\frac{r-3}{r-2}\right)^2 |S|^3 + 2(n - |S|)\frac{r-3}{r-2} |S|^2 + |S| \cdot (n - |S|)^2+ (n - |S|) \cdot |S|^2.    
\end{equation}
Set $\beta:=\frac{r-3}{r-2}$ and $t:=\frac{|S|}{n}$. We consider the following function 
\begin{align*}
g(t)=\beta^2 t^3 + 2\beta t^2(1-t) +t(1-t)^2+ t^2(1-t).
\end{align*}
When $t\in (0, 1]$, the maximum value of $g(t)$ occurs at $ t = \frac{r-2}{r-1}$. Therefore, we conclude that $g(t) \leq (\frac{r-2}{r-1})^2$ for all $t \in [0,1]$. By~\eqref{eq:sum all v}, we have
\[
\sum_{v \in V(D)} d^+_D(v)^2 
\leq g(t)\cdot n^3 \le \left(\frac{r-2}{r-1}\right)^2 n^3.
\]
The same argument applies to the in-degree version by considering the maximum in-degree vertex in $D$. 
% Hence, we obtain that
% \[
% \max \left\{ \sum_{v\in V(D)}
% d^+_{D}(v)^2,~\sum_{v\in V(D)}
% d^-_{D}(v)^2 \right\}\leq \left(\frac{r-2}{r-1}\right)^2 n^3.
% \]
\end{proof}

\begin{proof}[Proof of~\cref{Thm:=r-1/r-square}]
Given $r \ge 3$ and an $r$-vertex transitive tournament $T_r$, 
let $F$ be a $\mathcal Q^{L}_{T_r}$-colorable but not $\mathcal Q^2_r$-colorable $3$-graph.  It is easy to calculate that $\mathcal Q^2_r$ has density $(\frac{r-2}{r-1})^2$. 
		
Let $\mathcal P=(\cc, \ca)$ be a palette with $d(\mathcal P)>(\frac{r-2}{r-1})^2$. We will show that either $\mathcal Q^{L}_{T_r}\subseteq \cp$ or ${\rm rev}(\mathcal Q^{L}_{T_r})\subseteq \cp$.
Note that ${\rm rev}(\mathcal Q^{L}_{T_r})=\mathcal Q^{R}_{T_r}$. Hence, it suffices to show that $\mathcal Q^{L}_{T_r} \subseteq \cp$ or $\mathcal Q^{R}_{T_r} \subseteq \cp$.
		Similar to the proof of~\cref{Thm:=r-1/r}, 
		for the palette $\mathcal P=(\cc, \ca)$, we also define  two auxiliary digraphs $G_L$ and $G_R$ on the vertex set $\cc$. Given $a, b \in \cc$, not necessarily distinct, we add the arc $(a, b)$ in $G_L$ if there exists $c\in \cc$ such that $(a, b, c)\in\ca$. We add $(a, b)$ in $G_R$ if there exists a color $c\in \cc$ such  that $(c, a, b)\in\ca$. 
		
		When $|\mathcal C|\le r-1$, either $G_L$ contains a  loop or $G_R$ contains a  loop; otherwise, $(a,a,c)\notin \ca$ for any $a,c\in \cc$ and $(c',a',a')\notin \ca$ for any $a',c'\in \cc$ too, which implies that $d (\mathcal P)\le (\frac{|\mathcal C|-1}{|\mathcal C|})^2 \le (\frac{r-2}{r-1})^2$.  Recalling the definition of $\mathcal Q^{L}_{T_r}$, if $G_L$ has loops, we have $\mathcal Q^{L}_{T_r}\subseteq \cp$. Similarly, if $G_R$ has loops, then we have $\mathcal Q^{R}_{T_r}\subseteq \cp$. Hence, in this case, either $\mathcal Q^{L}_{T_r} \subseteq \cp$ or $\mathcal Q^{R}_{T_r} \subseteq \cp$.

When $|\mathcal C|> r-1$, if $G_L$ or  $G_R$ have loops, we are done. Moreover, if  $G_L$ contains  the transitive tournament $T_r$, then we have $\mathcal Q^{L}_{T_r}\subseteq \cp$. Similarly, if $G_R$ contains  the transitive tournament $T_r$,  then we have $\mathcal Q^{R}_{T_r}\subseteq \cp$. 
Now suppose that $G_L$ and $G_R$ are $T_r$-free and  have no loops. 
For any  $a\in \cc$, set $T_a=\{ (x, a, y)\in \ca: x,y\in \cc \}$. Note that
\[|T_a|\le d^{-}_{G_L}(a)\cdot d^{+}_{G_R}(a)\le \frac{1}{2}\left(d^{-}_{G_L}(a)^2+ d^{+}_{G_R}(a)^2\right),
\]
where the last inequality from the basic inequality.
By \cref{lem:digraph-degree-square}, we have
\[
\sum_{a\in \cc} d^{-}_{G_L}(a)^2\le \left(\frac{r-2}{r-1}\right)^2 |\cc|^3 ~\text{and~} \sum_{a\in \cc} d^{+}_{G_R}(a)^2\le \left(\frac{r-2}{r-1}\right)^2 |\cc|^3
\]
since  $G_L$ and $G_R$ are $T_r$-free. Hence, we have
\[
|\ca|=\sum_{a\in \cc}|T_a|\le \frac{1}{2} \sum_{a\in \cc} \left(d^{-}_{G_L}(a)^2+ d^{+}_{G_R}(a)^2\right)\le \left(\frac{r-2}{r-1}\right)^2 |\cc|^3,
\]
which is a contradiction.
\end{proof}

\begin{proof}[Proof of ~\cref{Thm:1/27-character}]
Let $F$ be a $3$-graph that is $\mathcal Q^{+i}_5$-colorable for $i\in \{1,2\}$  but not $\mathcal Q^{-}_3$-colorable. Since $d(\mathcal Q^-_3)=1/27$, we have $\pi_{{\rm u}}(F)\ge 1/27$. For the upper bound, it suffices to show that for any palette $\mathcal P= (\mathcal C, \mathcal A)$ with $d(\mathcal P)>1/27$, at least one of $\mathcal Q^{+1}_5$, $\mathcal Q^{+2}_5$ and ${\rm rev}(\mathcal Q^{+2}_5)$ is a subpalette $\mathcal P$.

We consider a palette $\mathcal P=(\mathcal C, \mathcal A)$ with $\mathcal Q^{+i}_5\nsubseteq \mathcal P$ for all $i \in [2]$. Let $L=\{c\in \mathcal C: \exists~(c, \alpha, \beta) \in \mathcal A \}$, $T=\{c\in \mathcal C: \exists~(\alpha, c,  \beta)\in \mathcal A \}$ and $R=\{c\in \mathcal C: \exists~(\alpha, \beta, c)\in \mathcal A\}$, where $\alpha,  \beta$ are not necessarily distinct. 
Then we claim that $L\cap T=L\cap R= T\cap R=\emptyset$. Indeed, $L\cap R\neq \emptyset$  means that there are $(c, \alpha_3, \beta_3), (\alpha_4, \beta_4,c) \in \mathcal A$, which implies that $\mathcal Q^{+1}_5\subseteq \mathcal P$. $L\cap T\neq \emptyset$  means that there are $(c, \alpha_1, \beta_1), (\alpha_2, c, \beta_2) \in \mathcal A$, which implies that $\mathcal Q^{+2}_5\subseteq \mathcal P$.  $T\cap R\neq \emptyset$  means that there are $( \alpha_5,c, \beta_5), (\alpha_6, \beta_6,c) \in \mathcal A$. In this case, we have ${\rm rev}(\mathcal Q^{+2}_5)\subseteq  P$.

Since $L\cap T=L\cap R= T\cap R=\emptyset$ and $|L|+|T|+|R|=|\mathcal C|$, we have 
\[
d(\mathcal P)=\frac{|\mathcal A|}{|\mathcal C|^3}\le \frac{|L|\cdot |T|\cdot |R|}{|\mathcal C|^3} \le \left( \frac{(|L|+|T|+|R|)/|\mathcal C|}{3}\right)^3=\frac{1}{27}.
\] 
\end{proof}

\begin{proof}[Proof of~\cref{Thm:4/27-character}]
Let $F$ be a $3$-graph that is $\mathcal Q^{+2}_5$-colorable but not $\mathcal Q^{-}_3$-colorable. 
Since $d(\mathcal Q'^-_3)=4/27$, we have $\pi_{{\rm u}}(F)\ge 4/27$.
	
Given a palette $\mathcal P=(\mathcal C, \mathcal A)$, let $L=\{c\in \mathcal C: \exists~(c, \alpha, \beta) \in \mathcal A \}$, $T=\{c\in \mathcal C: \exists~(\alpha, c,  \beta)\in \mathcal A \}$ and $R=\{c\in \mathcal C: \exists~(\alpha, \beta, c)\in \mathcal A\}$, where $\alpha,  \beta$ are not necessarily distinct. Suppose that $\mathcal Q^{+2}_5\nsubseteq \mathcal P$. Then we have $T\cap L=T\cap R=\emptyset$, which imples that
\[
d(P)=\frac{|\mathcal A|}{|\mathcal C|^3}\le \frac{|\mathcal C\setminus T| \cdot |T| \cdot |\mathcal C\setminus T|}{|\mathcal C|^3} \le t(1-t)^2\le 4/27,
\] 
where $t=|T|/\mathcal C|$ and $t\in (0,1)$. 
\end{proof}

\section{Proof of existence theorems}\label{sec-examples}
In this section, we will first prove~\cref{Thm:=r-1/r-exist} by using the following black box, which was recently developed by
Král', Kučerák, Lamaison, and Tardos~\cite{KKLT-25}. 

\begin{lemma}[\cite{KKLT-25}]\label{lem:classification palette}
Let $\mathcal P_1$ and $\mathcal P_2$ be two palettes. Then there exists a $3$-graph $F$ that is $\mathcal P_1$-colorable but not $\mathcal P_2$-colorable if and only if both $\mathcal P_1 \nsubseteq \mathcal P_2$ and $\mathcal P_1 \nsubseteq {\rm rev}(\mathcal P_2)$ hold.
\end{lemma}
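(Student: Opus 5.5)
The forward direction is immediate from the observation recorded just before the proofs in \cref{sec-upper-bounds}. If $\mathcal P_1\subseteq\mathcal P_2$, then every $\mathcal P_1$-colorable $3$-graph is $\mathcal P_2$-colorable. If $\mathcal P_1\subseteq{\rm rev}(\mathcal P_2)$, then every $\mathcal P_1$-colorable $3$-graph is ${\rm rev}(\mathcal P_2)$-colorable, and hence $\mathcal P_2$-colorable after reversing the vertex order. So in either case no $3$-graph can separate the two palettes. All the work is in the converse: assuming $\mathcal P_1\nsubseteq\mathcal P_2$ and $\mathcal P_1\nsubseteq{\rm rev}(\mathcal P_2)$, I must construct a $\mathcal P_1$-colorable $F$ that is not $\mathcal P_2$-colorable. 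I would do this by contraposition, building a single ``universal'' $\mathcal P_1$-colored $3$-graph.

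\textbf{Template.} Write $\mathcal P_1=(\mathcal C_1,\mathcal A_1)$ and $\mathcal P_2=(\mathcal C_2,\mathcal A_2)$. Let $A$ be a finite ordered complete graph whose pairs are colored by $\mathcal C_1$, chosen so that every admissible triple $(x,y,z)\in\mathcal A_1$ appears as the pattern of some triple $u<v<w$ in $A$, meaning $(\varphi(uv),\varphi(uw),\varphi(vw))=(x,y,z)$. Taking disjoint triples, one per admissible triple, and coloring the cross pairs arbitrarily suffices.

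\textbf{Ramsey step.} Use the Ramsey property of the class of finite ordered complete graphs with pairs colored by $\mathcal C_1$; this is a Ne\v{s}et\v{r}il--R\"odl type result. Apply it to the substructures ``single edge of color $c$'', once for each $c\in\mathcal C_1$ in turn, with $2|\mathcal C_2|$ colors. This produces an ordered colored complete graph $B=(V,<,\varphi)$ with the following property: for every coloring of its pairs by $\mathcal C_2\times\{\text{agree},\text{disagree}\}$, there is a copy of $A$ inside $B$ on which the color of a pair depends only on its $\varphi$-color.

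\textbf{The hypergraph.} Let $F$ be the $3$-graph on $V$ whose edges are the triples $u<v<w$ with $(\varphi(uv),\varphi(uw),\varphi(vw))\in\mathcal A_1$. By construction $F$ is $\mathcal P_1$-colorable, witnessed by $<$ and $\varphi$ restricted to the shadow $\partial F$.

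\textbf{Deriving the homomorphism.} Suppose $F$ is $\mathcal P_2$-colorable via an order $\prec$ and a coloring $\psi:\partial F\to\mathcal C_2$. Extend $\psi$ arbitrarily to all pairs. Give each pair $uv$ with $u<v$ the label $(\psi(uv),\,[u\prec v])$, where the bracket records whether $\prec$ agrees with $<$ on that pair. By the Ramsey property we obtain a copy $A'$ of $A$ on which this label is a function of the $\varphi$-color.
\begin{itemize}
\item \emph{Orientation.} I would make sure $A$ contains at least one pair of each color in $\mathcal C_1$ that occurs in $\mathcal A_1$, so each label value is actually realised on $A'$. Then ``agree/disagree'' depends only on the color. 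Since $\prec$ and $<$ are total orders on $A'$, some triangle-transitivity bookkeeping should force agreement to be uniform across all of $A'$; if not, I would also add monochromatic triangles to $A$ so that uniformity is enforced. So $\prec$ equals either $<$ or its reverse on $A'$.
\item \emph{The map.} Define $f(c)$ to be the $\psi$-value taken by pairs of color $c$ in $A'$.
\item \emph{Conclusion.} Every $(x,y,z)\in\mathcal A_1$ is realised by an edge of $F$ inside $A'$. Hence $(f(x),f(y),f(z))\in\mathcal A_2$ when $\prec$ agrees with $<$ on $A'$, and $(f(z),f(y),f(x))\in\mathcal A_2$ when it is the reverse. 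Thus $\mathcal P_1\subseteq\mathcal P_2$ or $\mathcal P_1\subseteq{\rm rev}(\mathcal P_2)$, which is the contrapositive.
\end{itemize}

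\textbf{Main obstacle.} The main obstacle is the Ramsey input: justifying, or citing correctly, that ordered complete graphs with $\mathcal C_1$-colored pairs form a Ramsey class for one-edge substructures. Alongside this, the orientation bookkeeping must give a single global direction on $A'$ rather than a color-dependent one. I would first try to cite the Ne\v{s}et\v{r}il--R\"odl theorem directly. If a cleaner route is needed, I would instead combine the ordinary hypergraph Ramsey theorem on a sufficiently large blow-up with Erd\H{o}s--Szekeres to get a subset that is monotone under $\prec$.
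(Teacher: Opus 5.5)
The paper does not prove this lemma; it imports it from \cite{KKLT-25} as a black box, so your argument has to stand on its own. Your forward direction is correct, and the Ramsey strategy for the converse is sound. The class of finite ordered complete graphs with pairs colored by $\mathcal C_1$ is a Ramsey class: encode it as ordered structures with $|\mathcal C_1|-1$ pairwise disjoint symmetric binary relations, forbidding the irreducible $2$-vertex configurations, and the Ne\v{s}et\v{r}il--R\"odl theorem applies. Iterating over the colors gives your $B$. Once you have a copy $A'$ on which $\prec$ equals $<$ or its reverse, your definition of $f$ and the deduction $\mathcal P_1\subseteq\mathcal P_2$ or $\mathcal P_1\subseteq{\rm rev}(\mathcal P_2)$ are correct. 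The fallback via blow-ups and Erd\H{o}s--Szekeres is unnecessary.

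The step that fails as written is the orientation step. Transitivity does not force the agree/disagree bit to be the same for different colors. For example, take $A$ on $1<2<3<4$ with $\{1,2\},\{3,4\}$ of color $a$ and all other pairs of color $b$. The order $2\prec 1\prec 4\prec 3$ reverses every $a$-pair and preserves every $b$-pair. With your $A$ (disjoint realizing triples, cross pairs colored arbitrarily) nothing rules this out. Then an edge $u<v<w$ realizing $(x,y,z)$ can be ordered non-monotonically by $\prec$; for instance $v\prec u\prec w$ yields only $(f(x),f(z),f(y))\in\mathcal A_2$. Alternatively, different admissible triples could land in $\mathcal A_2$ and in ${\rm rev}(\mathcal A_2)$ respectively. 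In either case no single subpalette map results.

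Your proposed repair with monochromatic triangles does nothing, because the bit is already constant within each color class by the Ramsey step. What you need are two-colored triangles. For each pair of distinct colors $c\neq c'$, include in $A$ vertices $u<v<w$ with $\varphi(uv)=\varphi(vw)=c$ and $\varphi(uw)=c'$.
\begin{itemize}
\item If $c$-pairs agree, then $u\prec v\prec w$ forces $u\prec w$, so $c'$-pairs agree.
\item If $c$-pairs disagree, then $w\prec v\prec u$ forces $w\prec u$, so $c'$-pairs disagree.
\end{itemize}
These gadgets need not be edges of $F$, since the argument uses only that $\prec$ is a total order on $V$. With them in $A$, the bit is constant across all colors, $\prec$ restricted to $A'$ is $<$ or its reverse, and the rest of your proof goes through unchanged.
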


\begin{proof}[Proof of~\cref{Thm:=r-1/r-exist}]
Given $r \ge 3$ and two digraphs $D_r, D'_r \in \mathcal F^*_r$, let $\cp_1=\mathcal Q_{D_r\incup D'_r}$ and $\cp_2=\mathcal Q_{r-1}$. By~\cref{lem:classification palette},  it suffices to show that $\mathcal P_1 \nsubseteq \mathcal P_2$ and $\mathcal P_1 \nsubseteq {\rm rev}(\mathcal P_2)$. 

Assume for contradiction
that $\mathcal P_1 \subseteq \mathcal P_2$, which means that there exists a mapping $f: \mathcal C_1\incup\mathcal C_2\incup\mathcal C_3\incup\mathcal C_4 \to [r-1]$ such that for every $(x,y,z)\in \mathcal A(\mathcal P_1)$ we have $(f(x),f(y),f(z))\in \mathcal A(\mathcal P_2)$. Due to $|\mathcal C_1|=r$, there must exist two distinct color $a, b\in \mathcal C_1$ such that $f(a)=f(b)$. Furthermore, due to $D_r\in \mathcal F^*_r$, either $(a, b, c_{ab})\in \mathcal(\mathcal P_1)$ or  $(b, a, c_{ba})\in \mathcal(\mathcal P_1)$. Let $\alpha=f(a)=f(b)$. Then $(\alpha, \alpha, \beta)\in \mathcal A(\mathcal P_2)$ for some $\beta\in \mathcal C(\mathcal P_2)$, which is impossible since $A(\mathcal P_2)=\{(x, y, z)\in [r-1]^3: x\neq y\}$.
Similarly, we show that $\mathcal P_1 \nsubseteq {\rm rev}(\mathcal P_2)$. Note that ${\rm rev}(\mathcal P_2)=([r-1], \{(z, y, x)\in [r-1]^3: x\neq y \})$. Due to $D'_r\in \mathcal F^*_r$, either $( c_{ab}, a, b)\in \mathcal A (\mathcal P_1)$ or  $(c_{ba}, b, a)\in \mathcal A (\mathcal P_1)$. If $\mathcal P_1 \subseteq {\rm rev}(\mathcal P_2)$, which implies that there exists $(\beta, \alpha, \alpha)\in \mathcal A(\mathcal P_2)$ for some $\beta, \alpha \in \mathcal C({\rm rev}(\mathcal P_2))$, producing a contradiction.

\end{proof}

To make the presentation more intuitive, in~\cref{Thm:=r-1/r-exist}, considering different pairs $(D_r, D'_r) \in \mathcal F^*_r \times \mathcal F^*_r$ allows us to obtain distinct $3$-graphs $F$. In the proofs of~\cref{Thm:=r-1/r-square-exist} and~\cref{Thm:4/27-graph} below, we will construct $3$-graphs that satisfy the conditions of the theorems, as opposed to relying on~\cref{lem:classification palette}.
In the construction we will use the following lemma for the linear hypergraphs.
Recall that a hypergraph is linear if any two edges have at most one vertex in common.

\begin{lemma}[\cite{Ander}]\label{lem:linear-graph}
For every $k\ge 3$, there exists a positive integer $n$ and a linear $k$-graph $H$ on the vertex set $[n]$ such that for every permutation $\sigma$ of $[n]$ there exists an edge $e$ in $H$ such that $\sigma$ is monotone on the vertices of $e$.
\end{lemma}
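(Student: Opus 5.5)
The plan is a probabilistic construction with deletion, driven by Erdős–Szekeres supersaturation. First we reformulate the condition. A permutation $\sigma$ of $[n]$ is monotone on a $k$-set $e$ exactly when, listing $e=\{i_1<\dots<i_k\}$, the values $\sigma(i_1),\dots,\sigma(i_k)$ are increasing or decreasing. Call such $k$-sets \emph{$\sigma$-monotone}. We therefore need a linear $k$-graph $H$ on $[n]$ that contains a $\sigma$-monotone edge for every $\sigma\in S_n$.

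\textbf{Step 1 (supersaturation).} Set $m=(k-1)^2+1$. By the Erdős–Szekeres theorem, every $m$-subset of $[n]$ contains a $\sigma$-monotone $k$-subset. Each $k$-set lies in $\binom{n-k}{m-k}$ of the $m$-sets, so double counting gives at least
\[
\binom{n}{m}\Big/\binom{n-k}{m-k}=\binom{n}{k}\Big/\binom{m}{k}\ge c_k n^k
\]
$\sigma$-monotone $k$-sets, for every $\sigma$. Here $c_k>0$ depends only on $k$.

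\textbf{Step 2 (random hypergraph).} Fix a small $\varepsilon\in(0,1/2)$ and put $p=n^{-(k-1)+\varepsilon}$. Let $H_0$ contain each $k$-subset of $[n]$ independently with probability $p$. For a fixed $\sigma$, the number $X_\sigma$ of $\sigma$-monotone edges of $H_0$ is binomial with mean $\mu\ge c_k n^{1+\varepsilon}$. Chernoff gives $\Pr[X_\sigma<\mu/2]\le e^{-\mu/8}$. There are only $n!\le e^{n\log n}$ permutations and $n^{1+\varepsilon}\gg n\log n$, so a union bound shows that with probability $1-o(1)$ every $\sigma$ has $X_\sigma\ge \tfrac{c_k}{2}n^{1+\varepsilon}$.

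\textbf{Step 3 (deletion to linearity).} Let $Y$ count unordered pairs of distinct edges of $H_0$ sharing at least two vertices. The number of such pairs of $k$-sets is $O(n^{2k-2})$, so $\mathbb E[Y]=O(n^{2k-2}p^2)=O(n^{2\varepsilon})$. By Markov, with probability at least $1/2$ we have $Y\le Cn^{2\varepsilon}$. Fix an outcome where both this and the event of Step 2 hold. Delete one edge from each bad pair to get a linear $k$-graph $H$. At most $Cn^{2\varepsilon}$ edges are removed, and since $2\varepsilon<1+\varepsilon$ this is $o(n^{1+\varepsilon})$. Hence every $\sigma$ still has a $\sigma$-monotone edge in $H$ once $n$ is large.

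The main obstacle is balancing the exponent. The concentration bound in Step 2 must beat the $n!$ union bound, which forces the expected number of monotone edges to be superlinear in $n$. At the same time, the number of overlapping pairs must stay below that count. The choice $p=n^{-(k-1)+\varepsilon}$ with $0<\varepsilon<1$ achieves both, so I expect this step to go through as sketched. The only genuinely combinatorial input is the Erdős–Szekeres supersaturation of Step 1.
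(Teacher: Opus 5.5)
Your proof is correct and complete. Erd\H{o}s--Szekeres on each $((k-1)^2+1)$-subset, together with the identity $\binom{n}{m}/\binom{n-k}{m-k}=\binom{n}{k}/\binom{m}{k}$, gives $\Omega_k(n^k)$ $\sigma$-monotone $k$-sets for every $\sigma$. With $p=n^{-(k-1)+\varepsilon}$, the Chernoff bound $e^{-\Omega(n^{1+\varepsilon})}$ beats the union bound over $n!=e^{O(n\log n)}$ permutations. The expected number of pairs of edges meeting in two or more vertices is $O(n^{2\varepsilon})=o(n^{1+\varepsilon})$, so deleting one edge from each such pair leaves a linear $k$-graph that still has a monotone edge for every $\sigma$. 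The only housekeeping points are taking $n\ge (k-1)^2+1$ in Step 1 and noting that the two good events hold simultaneously with probability at least $1/2-o(1)>0$; you essentially cover both.

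The paper does not prove this lemma itself. It quotes it from Lamaison~\cite{Ander} as a black box, so there is no in-paper argument to compare yours against. Your random-hypergraph-with-deletion argument is a self-contained replacement for that citation. It also gives slightly more than is needed: every permutation has $\Omega(n^{1+\varepsilon})$ monotone edges in the final $H$, and the hypothesis $k\ge 3$ is never used.
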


\begin{proof}[Proof of~\cref{Thm:=r-1/r-square-exist}]
Given $r\ge 3$, let $k=r+1$. By~\cref{lem:linear-graph}, there exists a positive integer $n$ and a linear $k$-graph $H$ on the vertex set $[n]$ such that for every permutation $\sigma$ of $[n]$ there exists an edge $e$ in $H$ such that $\sigma$ is monotone on the vertices of $e$.
Without loss of generality, assume that $A(T_r)=\{(i,j): 1\le i<j\le r\}$. We construct a $3$-graph $F_H$ on the vertex set $[n]$ such that $F_H$ is $\mathcal Q^L_{T_r}$-colorable but not $\mathcal Q^2_r$-colorable as follows.
For every edge $e \in E(H)$ with $e=\{x_0, x_1, \dots, x_r\}$ and $x_0<x_1<\dots<x_r$, place $\binom{r}{2}$ edges in $F_H$ on the following $3$-sets
\begin{itemize}
\item $\{x_0, x_i, y_j\}$ for any  $1\le i<j\le r$.
\end{itemize}

We first claim that $F_H$ is $\mathcal Q^L_{T_r}$-colorable. Recalling that $\mathcal Q^L_{T_r}$ has the color set $\cc_1 \incup \cc_2$ and the admissible set $\{(i, j, c_{ij})\in \cc_1 \times \cc_1 \times \cc_2:  (i, j)\in A(T_r)\}$.
Since the hypergraph $H$ is linear, the only pairs of edges of $F_H$ that intersect
in two vertices are those that are generated by the same edge of $H$, otherwise the intersection size is at most $1$. Moreover, in the natural order of $[n]$, for each edge $e \in E(H)$ with $e=\{x_0, x_1, \dots, x_r\}$,
consider the function $\varphi: \binom{[n]}{2}\to \cc_1 \incup \cc_2$ such that every edge $\{x_0, x_i, y_j\}$ with $1\le i<j\le r$ satisfies
\begin{itemize}
\item 
 $(\varphi(x_0x_i),\varphi(x_0x_j), \varphi(x_ix_j)) =( i, j,c_{ij})\in \mathcal A(\mathcal Q^L_{T_r})$.
\end{itemize}
Hence, $F_H$ is $\mathcal Q^L_{T_r}$-colorable. 

We next show that $F_H$ is not $\mathcal Q^2_{r-1}$-colorable.
Recalling that $\mathcal Q^2_{r-1} = ([r-1], \{ (x, y, z) \in [r-1]^3 : x \ne y \text{ and } y\ne z\}$.
Assume for contradiction that $F_{H}$ is $\mathcal Q^2_{r-1}$-colorable with a permutation $\sigma$ of $[n]$ and the function $\varphi: \binom{[n]}{2}\to \mathcal C(\mathcal Q^2_{r-1})$. By the property of $H$, there must exist an edge $e_H\in E(H)$ such that $\sigma$ is monotone on the vertices of $e_H$. Let $x_0<x_1<x_2<\dots<x_r$ be the vertices of $e_H$. 
Then there must be $\varphi(x_0x_i)= \varphi(x_0x_j)$ for some distinct $i,j \in [r]$.
If $\sigma$ is increasing on $e_H$, then $(\varphi(x_0x_i),\varphi(x_0x_j), \varphi(x_ix_j))\in \mathcal A(\mathcal Q^2_{r-1})$ which is impossible since $x\neq y$ for each $(x,y,z)\in  \mathcal A(\mathcal Q^2_{r-1})$. 
If $\sigma$  is decreasing on $e_H$, then $(\varphi(x_ix_j),\varphi(x_0x_i), \varphi(x_0x_j))\in \mathcal A(\mathcal Q^2_{r-1})$ which is also impossible since $y\neq z$ for each $(x,y,z)\in  \mathcal A(\mathcal Q^2_{r-1})$.
\end{proof}

\begin{proof}[Proof of~\cref{Thm:4/27-graph}]
Let $H$ be a linear $4$-graph on $[n]$ as in~\cref{lem:linear-graph}. For every edge $e_H$ with vertices $v_1 < v_2 < v_3 < v_4$, place an edge  on the triple $v_1v_{2}v_{3}$ and an edge  on the triple $v_1v_{3}v_{4}$, and to create a $3$-graph $F^{4}_{H}$.

We first claim that $F^{4}_{H}$ is  $\mathcal Q^{+2}_5$-colorable. Consider the function $\varphi: \binom{[n]}{2} \to \mathcal [5]$ such that for each $e_H\in E(H)$ with vertices $v_1 < v_2 < v_3 < v_4$ in the natural order of $[n]$ the following hold: 
\[
(\varphi(v_1v_2),\varphi(v_1v_3), \varphi(v_2v_3))=(4, 1, 5)\text{~and~} (\varphi(v_1v_3),\varphi(v_1v_4), \varphi(v_3v_4))=(1, 2, 3).
\] 
Since $H$ is linear, this coloring is consistent, and $F^{4}_{H}$ is $\mathcal Q^{+2}_5$-colorable.

We next show that $F^{4}_{H}$ is not $\mathcal Q'^{-}_3$-colorable. Let $\sigma$ be any order of $V(F^{4}_{H})$. By the property of $H$, there must exist an edge $e_H\in E(H)$ such that $\sigma$ is monotone on the vertices of $e_H$. Let $v_1 < v_2 < v_3< v_4$ be the vertices of $e_H$. For any function $\varphi: \binom{[n]}{2} \to \mathcal [5]$,   if $\sigma^*$ is increasing on $e_H$, then $(\varphi(v_1v_2),\varphi(v_1v_3), \varphi(v_2v_3))$ and  $(\varphi(v_1v_3),\varphi(v_1v_4), \varphi(v_3v_4))$ must belong to  $\mathcal A(\mathcal Q'^{-}_3)$, which is impossible since no color in $\mathcal C(\mathcal Q'^{-}_3)$ is both a top color and a left color. If $\sigma$ is decreasing on $e_H$,  then $(\varphi(v_3v_2),\varphi(v_3v_1), \varphi(v_2v_1))$ and  $(\varphi(v_4v_3),\varphi(v_4v_1), \varphi(v_3v_1))$ must belong to $\mathcal A(\mathcal Q'^{-}_3)$, which also is impossible since no color in $\mathcal C(\mathcal Q'^{-}_3)$ is both a top color and a right color. 
Thus, $F^{4}_{H}$ is a $3$-graph that is $\mathcal Q^{+2}_5$-colorable but not $\mathcal Q'^{-}_3$-colorable. 
\end{proof}

\section{Concluding remarks}\label{sec-conclude}
In this paper, we studied the uniform Tur\'an densities of $3$-graphs by combining several extremal results of digraphs.
We summarize our contributions as follows.

\cref{Thm:=r-1/r,Thm:=r-1/r-square,Thm:1/27-character,Thm:4/27-character} provide sufficient characterizations for the $3$-graphs $F$ whose uniform Turán densities take the values $\frac{r-2}{r-1}$, $(\frac{r-2}{r-1})^2$,  $1/27$ and $4/27$, respectively. 
\cref{Thm:=r-1/r-exist,Thm:=r-1/r-square-exist,Thm:4/27-graph} guarantee the existence of such $3$-graphs that satisfy the corresponding characterizations. 

Let $d\in \{\frac{r-2}{r-1}, (\frac{r-2}{r-1})^2: r\ge 3\} \cup \{1/27, 4/27\}$, such 
a sufficient characterization for $\piu (F)=d$ is outlined as follows:

\begin{enumerate}
\item $F$ is $\cp_1$-colorable but not $\cp_2$-colorable, where $\cp_2$ satisfies $d(\cp_2)= d$ and $\cp_1$ satisfies that for any palette $\cp$ with $d(\cp)> d$,  either $\cp_1\subseteq \cp$ or ${\rm rev}(\mathcal P_1)\subseteq \cp$.
\end{enumerate}

While this proof strategy appears to enable the search for new values of the uniform Turán density, the main challenge is finding suitable  $\cp_1$ and $\cp_2$.  Specifically, we need to show that every palette $\cp$ with $d(\cp)> d(\cp_2)$ should either have $\cp_1\subseteq \cp$ or ${\rm rev}(\mathcal P_1)\subseteq \cp$. Moreover, by~\cref{lem:classification palette}, both $\mathcal P_1 \nsubseteq \mathcal P_2$ and $\mathcal P_1 \nsubseteq {\rm rev}(\mathcal P_2)$ must hold, otherwise, such $3$-graphs $F$ do not exist.

For any $r\ge3$ and any (not necessarily distinct) digraphs $D_r, D'_r\in\mathcal F^*_r$, we prove the existence of $3$-graphs satisfying $\mathcal Q_{D_r\incup D'_r}$-colorable but not $\mathcal Q_{r-1}$-colorable (see~\cref{Thm:=r-1/r-exist}), using the black box~\cref{lem:classification palette}. 
In fact, if we know the structure of digraphs $D_r$ and $ D'_r$, we can construct a linear hypergraph first and then the $3$-graph $F$, following an approach analogous to the proof of~\cref{Thm:=r-1/r-square-exist}. 
For example, let $T_3$ be the transitive tournament on three vertices and consider $D_3=D'_3=T_3$. We can find a linear $7$-graph $H$ on the vertex set $[n]$ such that for every permutation $\sigma$ of $[n]$ there exists an edge $e$ in $H$ such that $\sigma$ is monotone on the vertices of $e$. Then for every edge $e_H$ with vertices $v_1 < v_2 < \dots < v_7$, place an edge on the triples $v_1v_2v_4$, $v_2v_3v_4$, $v_1v_3v_4$, $v_4v_5v_6$, $v_4v_5v_7$ and $v_4v_6v_7$  to create a $3$-graph $F_H$. Similar to the proof of~\cref{Thm:=r-1/r-square-exist}, we can check that such a $3$-graph $F_H$ is $\mathcal Q_{T_3\incup T_3}$-colorable but not $\mathcal Q_{2}$-colorable.
Consequently, although the $3$-graphs corresponding to different digraphs $D_r$ and $D'_r$ share the same uniform Turán density, their structures can be non-isomorphic. Thus, finding more digraphs in $\mathcal F^*_r$ allows us to determine more $3$-graphs $F$ with $\piu(F)=\frac{r-2}{r-1}$.

For each $k\in\mathbb N$, let $T_{r_1},\dots,T_{r_k}$ be tournaments with $\sum_{i=1}^{k} r_i=r$ and $|V(T_{r_i})|=r_i$. We show that  $T_{r_1}\oplus T_{r_2} \oplus T_{r_3}\in \mathcal F_r^*$ for all $r\ge 11$ (see~\cref{Thm:three-digraphs}). We believe~\cref{Thm:three-digraphs} can be extended to the sum of four or five tournaments by applying our method with more complex analysis. A natural question arises: 

\begin{question}\label{Q:many tournaments} 
For each $k\in\mathbb N$, does there exist a function  $f(k) \in \mathbb{N}$ such that for all $r\ge f(k)$, $T_{r_1}\oplus \dots \oplus T_{r_k}\in \mathcal F_r^*$?
\end{question}

Let $f^*(k)$ be the smallest integer $f(k)$ for which Question~\ref{Q:many tournaments} holds. 
Note that the complete digraph $\overleftrightarrow{K_r}$ can be viewed as the sum of $r$ tournaments of order~$1$. 
However, $\overleftrightarrow{K_r}\notin \mathcal F^*_r$ for any $r\ge3$. Thus, $f^*(k)>k$ for all $k\ge 2$.
By~\cref{Thm:one-digraphs}, we have $f^*(1)=3$. 
By~\cref{Thm:two-digraphs}, we obtain $f^*(2)\le 6$. 
Finally, by~\cref{Thm:three-digraphs} and the discussion following it, we have $f^*(3)=11$. So it would be very interesting to determine $f(k)$ for all $k\ge 4$.

Given a digraph $D$, we have
\[
|A(D)|=\sum_{v\in V(D)}d^+_{D}(v)= \sum_{v\in V(D)}d^-_{D}(v).
\]
The sum of the out-degrees (in-degrees) of $D$ can be regarded as the $\ell_1$-norm of its out-degree (in-degrees) vector.
Therefore, given a digraph $F$,  determining $\ex (n, F)$ is equivalent to determining the maximum $\ell_1$-norm of the out-degrees vector (or in-degrees vector) of an $F$-free $n$-vertex digraph.

Recalling that \cref{lem:digraph-degree-square} plays a key role in the proof of~\cref{Thm:=r-1/r-square},  we observe that \cref{lem:digraph-degree-square} determines the maximum $\ell_2$-norm of the out-degrees vector and in-degrees vector of all $T_r$-free digraphs, where $T_r$ is the transitive tournament on $r$ vertices. We believe that \cref{lem:digraph-degree-square} holds for all digraphs in the family $\mathcal F^*_r$. If \cref{lem:digraph-degree-square} can be extended to all $D\in \mathcal F^*_r$, then \cref{Thm:=r-1/r-square} and \cref{Thm:=r-1/r-square-exist} can also be extended accordingly to hold for $\mathcal F^*_r$.

We refer to the square of
the $\ell_2$-norm of the out-degrees vector (or in-degrees vector) as the out-degrees (or in-degree) squared sum, denoted by $\gamma^+_2(D)$ (or $\gamma^-_2(D)$):
\[
\gamma^+_2(D)=\sum_{v\in V(D)}d^+_{D}(v)^2,~\text{and }~ \gamma^-_2(D)=\sum_{v\in V(D)}d^-_{D}(v)^2.
\]
Moreover, given a digraph $D$, let $\gamma_2(D):=\max\{\gamma^+_2(D), \gamma^-_2(D)\}$ be the {\it degree squared sum} of $D$.
\begin{question}\label{Q:l2-norm}
Given a digraph $F$, what is the maximum degree squared sum
that an $F$-free $n$-vertex digraph $D$ can have?
\end{question}

It is worth noting that hypergraph Tur\'an problems in $\ell_2$-norm have seen some notable research progress (see~\cite{Balogh-norm-22-1,Balogh-norm-22-2}). Therefore, \cref{Q:l2-norm} is an interesting question that warrants investigation in its own right.

%Note that elements in the palettes are some ordered triples and digraph elements are some ordered pairs. Therefore, some extremal results in digraphs offer a viable pathway to determine the uniform Turán density of $3$-graphs. 

\bibliographystyle{abbrv}
\bibliography{ref}
\end{document}